\documentclass[12pt,leqno]{amsart}
\usepackage[a4paper,hmargin={2.5cm,2.5cm},vmargin={2.5cm,2.5cm}]{geometry}

\usepackage[utf8]{inputenc}
\usepackage[T1]{fontenc}
\usepackage{lmodern}
\usepackage{mathrsfs}
\usepackage[english]{babel}
\usepackage{csquotes}
\usepackage{cite}
\usepackage[shortlabels]{enumitem}
\usepackage{libertinus}
\usepackage{libertinust1math}
\usepackage{mathtools}
\mathtoolsset{showonlyrefs}
\usepackage{amssymb}

\usepackage{graphicx}
\usepackage{float}
\usepackage{xcolor}
\usepackage{tikz-cd}
\usepackage{pgfplots}
\usepgfplotslibrary{fillbetween}
\pgfplotsset{compat=1.18}
\usepackage{multicol}
\usepackage{dsfont}
\usepackage{hyperref}

\numberwithin{equation}{section}

\theoremstyle{plain}
\newtheorem{theorem}{Theorem}[section]
\newtheorem{lemma}[theorem]{Lemma}
\newtheorem{proposition}[theorem]{Proposition}
\newtheorem{corollary}[theorem]{Corollary}

\newtheorem{remark}[theorem]{Remark}
\newtheorem*{remark*}{Remark}

\title[Vectorial Sobolev Minimizers]{Sobolev Minimizers for Critical Vectorial
$p$-Laplacian Problems}
\author{Simone Mauro}
\address{Dipartimento di Matematica e Informatica, Universit\`a della
Calabria, Ponte Pietro Bucci, Cubo 31B, 87036 Arcavacata di Rende, Cosenza,
Italy}
\email{simone.mauro@unical.it}
\date{\today}

\begin{document}

\begin{abstract}
We study minimizers of the optimal Sobolev quotients for the
critical embeddings
$D^{1,p}(\mathbb R^N;\mathbb R^m)\hookrightarrow
L^{p^*}(\mathbb R^N;\mathbb R^m)$ and
$W^{1,p}(\Omega;\mathbb R^m)\hookrightarrow
L^{p^*}(\Omega;\mathbb R^m)$. In the whole space, we classify all
minimizers as
$\boldsymbol u=\boldsymbol c\,U_{\delta,x_0}$, where
$\boldsymbol c\neq\boldsymbol0$ and $U_{\delta,x_0}$ is an
Aubin--Talenti bubble.

On bounded domains and for $\lambda>0$, we equip
$W^{1,p}(\Omega;\mathbb R^m)$ with
\(
  \|\boldsymbol u\|_\lambda^p
  :=\int_\Omega
  \bigl(|D\boldsymbol u|^p+\lambda|\boldsymbol u|^p\bigr)
\).
Using scalar attainment results, we establish existence
and synchronization of vectorial minimizers.
At resonance, we consider the pure Neumann problem
\(
  -\boldsymbol\Delta_p\boldsymbol u
  =|\boldsymbol u|^{p^*-2}\boldsymbol u\ \text{in }\Omega.
\)
For the associated quotient, we prove existence and global
$C^1(\overline\Omega)$ regularity of minimizers.
Finally, we prove synchronization of radial solutions and
deduce symmetry breaking on balls.
\end{abstract}
\subjclass[2020]{Primary 35J50, 35J92;
Secondary 35B33, 35J57, 35B06}

\keywords{Critical Sobolev embeddings,
vectorial $p$-Laplacian, Sobolev minimizers,
Neumann boundary conditions, symmetry breaking}

\maketitle
\tableofcontents

\section{Introduction}

Let $1<p<N$, $p^*=Np/(N-p)$, and
$\boldsymbol u=(u^1,\ldots,u^m)\colon\mathbb R^N\to\mathbb R^m$. We write
\[
  |D\boldsymbol u|^2:=\sum_{i=1}^m|\nabla u^i|^2,
  \qquad
  \boldsymbol\Delta_p\boldsymbol u
  :=\mathbf{div}\bigl(|D\boldsymbol u|^{p-2}D\boldsymbol u\bigr).
\]
For existence, multiplicity, and regularity results for quasilinear elliptic
systems in the subcritical setting, we refer to
\cite{CaninoMauroVectorial,MauroMultiplicityRegularity}. Against this
background, we first consider the critical vectorial problem
\[
  -\boldsymbol\Delta_p\boldsymbol u
  =|\boldsymbol u|^{p^*-2}\boldsymbol u
  \qquad\text{in }\mathbb R^N,
  \tag{$\mathcal P_\infty$}\label{Pinf}
\]
associated with the vectorial Sobolev quotient
\[
  \mathcal Q_\infty(\boldsymbol u)
  :=\frac{\displaystyle\int_{\mathbb R^N}|D\boldsymbol u|^p}
  {\displaystyle
   \left(\int_{\mathbb R^N}|\boldsymbol u|^{p^*}\right)^{p/p^*}},
  \qquad
  S_m:=\inf_{\boldsymbol u\in
  D^{1,p}(\mathbb R^N;\mathbb R^m)\setminus\{\boldsymbol0\}}
  \mathcal Q_\infty(\boldsymbol u).
\]
By homogeneity, every minimizer of \(S_m\), after multiplication by a
suitable positive constant, is a weak solution of \eqref{Pinf}.

Thus $S_m$ is the optimal constant in the critical Sobolev inequality
\[
  S_m\left(\int_{\mathbb R^N}|\boldsymbol u|^{p^*}\right)^{p/p^*}
  \leq\int_{\mathbb R^N}|D\boldsymbol u|^p,
  \qquad
  \boldsymbol u\in D^{1,p}(\mathbb R^N;\mathbb R^m).
\]
The constant $S_m$ is also optimal in
\[
  S_m\left(\int_\Omega|\boldsymbol u|^{p^*}\right)^{p/p^*}
  \leq\int_\Omega|D\boldsymbol u|^p,
  \qquad
  \boldsymbol u\in W_0^{1,p}(\Omega;\mathbb R^m),
\]
for every nonempty open set $\Omega\subset\mathbb R^N$. Indeed, extension
by zero gives one inequality, while the reverse one follows by placing
suitably rescaled compactly supported competitors inside a ball contained
in $\Omega$.

The scalar optimal constant, obtained by taking $m=1$, will be denoted by
$S$. It is attained by the explicit functions
\begin{equation}\label{eq:bubble}
  U_{\delta,x_0}(x)
  :=\left(
  \frac{N^{1/p}\left(\dfrac{N-p}{p-1}\right)^{(p-1)/p}
  \delta^{1/(p-1)}}
  {\delta^{p/(p-1)}+|x-x_0|^{p/(p-1)}}
  \right)^{(N-p)/p},
\end{equation}
where $\delta>0$ and $x_0\in\mathbb R^N$. These functions solve
$-\Delta_pU=U^{p^*-1}$. For $1<p<2$, the classification of all positive
solutions in $D^{1,p}(\mathbb R^N)$ was first proved in the range
$p^*\geq2$ in \cite{DamascelliMerchanMontoroSciunzi2014} and then in the
full range in \cite{Vetois2016}. The semilinear case $p=2$ was established
in \cite{CaffarelliGidasSpruck1989,ChenLi1991}, while the case $2<p<N$ was
completed in \cite{Sciunzi2016}.

Our first result gives a complete classification of the minimizers of the
optimal vectorial Sobolev constant $S_m$ defined above. It may also be viewed
as the critical counterpart of the classification principle for
least-energy vectorial solutions developed in \cite{CaninoMauroVectorial}.

\begin{theorem}\label{thm:whole-space}
The constant $S_m$ is attained and $S_m=S$. Moreover, every minimizer
$\boldsymbol u$ of $S_m$ has the form
\[
  \boldsymbol u
  =\bigl(c^1U_{\delta,x_0},\ldots,c^mU_{\delta,x_0}\bigr),
\]
for some $\boldsymbol c=(c^1,\ldots,c^m)\in\mathbb R^m\setminus
\{\boldsymbol0\}$, $\delta>0$, and $x_0\in\mathbb R^N$, where
$U_{\delta,x_0}$ is the Aubin--Talenti bubble defined in \eqref{eq:bubble}. Conversely,
every map of this form is a minimizer.
\end{theorem}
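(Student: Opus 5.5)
The plan is to reduce to the scalar problem through the modulus $\rho:=|\boldsymbol u|$, and then use the equality cases of the two inequalities involved.

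\emph{Step 1: $S_m\ge S$.} For $\boldsymbol u\in D^{1,p}(\mathbb R^N;\mathbb R^m)$, the function $\rho=|\boldsymbol u|$ lies in $D^{1,p}(\mathbb R^N)$. Almost everywhere we have the pointwise bound $|\nabla\rho|\le|D\boldsymbol u|$. Indeed, on $\{\rho>0\}$ we have $\nabla\rho=\sum_i (u^i/\rho)\nabla u^i$, and Cauchy--Schwarz gives the bound; on $\{\rho=0\}$, $\nabla\rho=0$ a.e. Hence
\[
  S\Bigl(\int|\boldsymbol u|^{p^*}\Bigr)^{p/p^*}
  =S\|\rho\|_{p^*}^p\le\int|\nabla\rho|^p\le\int|D\boldsymbol u|^p ,
\]
so $S_m\ge S$.

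\emph{Step 2: $S_m\le S$ and the converse direction.} Take $\boldsymbol u=\boldsymbol cU$ with $\boldsymbol c\ne\boldsymbol0$ and $U=U_{\delta,x_0}$. Then $|D\boldsymbol u|=|\boldsymbol c|\,|\nabla U|$ and $|\boldsymbol u|=|\boldsymbol c|U$, so $\mathcal Q_\infty(\boldsymbol u)=\mathcal Q(U)=S$. This shows $S_m=S$, shows that $S_m$ is attained, and proves the converse statement.

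\emph{Step 3: classification.} Let $\boldsymbol u$ be a minimizer. Then both inequalities in Step 1 are equalities.

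First, $\rho$ attains $S$. By the classical Aubin--Talenti characterization of extremals (nonnegative, via the classification results cited in the introduction applied to the Euler--Lagrange equation after scaling), $\rho=\kappa U_{\delta,x_0}$ for some $\kappa>0$. In particular $\rho>0$ everywhere.

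Second, $\int|\nabla\rho|^p=\int|D\boldsymbol u|^p$ together with the pointwise bound forces $|\nabla\rho|=|D\boldsymbol u|$ a.e. On $\{\rho>0\}$, which is now all of $\mathbb R^N$, write $\boldsymbol u=\rho\,\boldsymbol\theta$ with $|\boldsymbol\theta|=1$. Then
\[
  |D\boldsymbol u|^2=|\nabla\rho|^2+\rho^2|D\boldsymbol\theta|^2 .
\]
This identity comes from $\boldsymbol\theta\cdot\partial_k\boldsymbol\theta=0$. Therefore $D\boldsymbol\theta=0$ a.e.

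\emph{Main obstacle: regularity of $\boldsymbol\theta$.} To conclude, one must justify that $\boldsymbol\theta=\boldsymbol u/\rho$ is $W^{1,1}_{\rm loc}$ and that the product rule holds. This is fine because $\rho$ is smooth and bounded below by a positive constant on compact sets. Consequently $\boldsymbol\theta\in W^{1,p}_{\rm loc}$ with $D\boldsymbol\theta=(D\boldsymbol u-\boldsymbol\theta\otimes\nabla\rho)/\rho$. Since $\mathbb R^N$ is connected, $D\boldsymbol\theta=0$ a.e. implies that $\boldsymbol\theta$ is a constant unit vector $\boldsymbol e$. Hence $\boldsymbol u=\kappa\boldsymbol e\,U_{\delta,x_0}$, which has the claimed form with $\boldsymbol c=\kappa\boldsymbol e$.

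As an alternative route to the same conclusion, one can avoid the polar decomposition. Equality in Cauchy--Schwarz at a.e.\ point gives $\nabla u^i$ parallel to $u^i/\rho$ componentwise, i.e.\ $\nabla(u^i/\rho)=0$.
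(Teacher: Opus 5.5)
Your proposal is correct and follows essentially the same route as the paper. You reduce to $\rho=|\boldsymbol u|$, identify $\rho$ as $\kappa U_{\delta,x_0}$ through the scalar classification, and use the equality case of $|\nabla\rho|\le|D\boldsymbol u|$ to obtain $\nabla(u^i/\rho)=0$. Your identity $|D\boldsymbol u|^2=|\nabla\rho|^2+\rho^2|D\boldsymbol\theta|^2$ is just the paper's Lagrange identity divided by $\rho^2$.

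One small slip: $\rho$ is not smooth in general, since for $p>2$ the bubble is only $C^{1}$ at $x_0$. This is harmless, because your quotient-rule step only needs local Lipschitz continuity of $\rho$ and a positive lower bound on compact sets.
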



After the whole-space classification, we turn to the corresponding
perturbed problem on a bounded, connected
domain $\Omega\subset\mathbb R^N$ of class $C^2$, subject to Neumann boundary
conditions:

\begin{equation}\tag{$\mathcal P_\lambda$}\label{Pl}
  \begin{cases}
    -\boldsymbol\Delta_p\boldsymbol u
      +\lambda|\boldsymbol u|^{p-2}\boldsymbol u
      =|\boldsymbol u|^{p^*-2}\boldsymbol u & \text{in }\Omega, \\
    |D\boldsymbol u|^{p-2}D\boldsymbol u\,\nu=0
      & \text{on }\partial\Omega.
  \end{cases}
\end{equation}
Here $D\boldsymbol u$
is the $m\times N$ Jacobian matrix of $\boldsymbol u$, while
$\nu\in\mathbb R^N$ denotes the outward unit normal to $\partial\Omega$.
Thus $D\boldsymbol u\,\nu$ is the matrix--vector product in $\mathbb R^m$,
namely
\[
  D\boldsymbol u\,\nu
  =\bigl(\nabla u^1\cdot\nu,\ldots,\nabla u^m\cdot\nu\bigr)
  =\bigl(\partial_\nu u^1,\ldots,\partial_\nu u^m\bigr).
\]
Consequently, the boundary condition is understood componentwise as
$|D\boldsymbol u|^{p-2}\partial_\nu u^i=0$ on $\partial\Omega$ for every
$i=1,\ldots,m$.

For $\lambda>0$, define
\[
  \mathcal Q_\lambda(\boldsymbol u)
  :=\frac{\displaystyle\int_\Omega
  \bigl(|D\boldsymbol u|^p+\lambda|\boldsymbol u|^p\bigr)}
  {\displaystyle
  \left(\int_\Omega|\boldsymbol u|^{p^*}\right)^{p/p^*}},
  \qquad
  S_{\lambda,m}
  :=\inf_{\boldsymbol u\in W^{1,p}(\Omega;\mathbb R^m)
  \setminus\{\boldsymbol0\}}\mathcal Q_\lambda(\boldsymbol u).
\]
Every minimizer of \(S_{\lambda,m}\), after multiplication by a
suitable positive constant, is a weak solution of \eqref{Pl}.
We denote its scalar counterpart by $S_\lambda$. In this notation,
$S_{\lambda,m}$ is the optimal constant in the critical Sobolev inequality
\[
  S_{\lambda,m}
  \left(\int_\Omega|\boldsymbol u|^{p^*}\right)^{p/p^*}
  \leq\int_\Omega
  \bigl(|D\boldsymbol u|^p+\lambda|\boldsymbol u|^p\bigr),
  \qquad
  \boldsymbol u\in W^{1,p}(\Omega;\mathbb R^m).
\]
Equivalently, it is the optimal constant for the critical embedding of
$W^{1,p}(\Omega;\mathbb R^m)$, endowed with the norm induced by the
right-hand side, into $L^{p^*}(\Omega;\mathbb R^m)$. This embedding is
continuous but not compact in general at the critical exponent.

The scalar case $m=1$ has been extensively studied in connection with
critical Neumann problems. In the case $p=2$, positive solutions and the
associated sharp Sobolev-type inequalities were investigated in
\cite{AdimurthiYadava1994}, while the existence and nonexistence of
least-energy solutions for critical quasilinear equations with a lower-order
perturbation were studied in \cite{Ferrero2006}. 

{

More recently, existence of scalar minimizers was established
in \cite{CHS2026} for critical $p$-Laplacian problems with mixed
boundary conditions. In particular, \cite[Theorem~1.4]{CHS2026}
includes the attainment of $S_\lambda$ for every $\lambda>0$
under the dimensional assumption $N\ge 2p-1$.
Beyond scalar attainment, we characterize the role of constant
minimizers in the positive-mass problem and study the critical
pure Neumann problem under a nonlinear centering constraint.
Section~2 collects the scalar results needed below, including
the boundary bubble estimates used in the resonant case.
Building on the positive-mass scalar theory, our second main
theorem establishes attainment in the vectorial setting and
characterizes all vectorial minimizers.

}


\begin{theorem}\label{thm:bounded-vector}
{Assume that \(N\geq2p-1\), and let \(\lambda>0\).}
Then $S_\lambda$ is attained by a positive scalar minimizer. Moreover,
$S_{\lambda,m}$ is attained and
$S_{\lambda,m}=S_\lambda$. Every vectorial minimizer is
synchronized: there exist $\boldsymbol c\in\mathbb S^{m-1}$ and a positive
scalar minimizer $\omega\in C^1(\overline\Omega)$ of $S_\lambda$ such that
\(
  \boldsymbol u=\boldsymbol c\,\omega.
\)
After rescaling, $\omega$ satisfies
\[
  \begin{cases}
    -\Delta_p\omega+\lambda\omega^{p-1}=\omega^{p^*-1}
      &\text{in }\Omega,\\
    |\nabla\omega|^{p-2}\partial_\nu\omega=0
      &\text{on }\partial\Omega.
  \end{cases}
\]
Finally, the following alternatives describe the constant minimizers. If
\(p>2\), every minimizer of \(S_{\lambda,m}\) is nonconstant. If
\(1<p\leq2\), define
\[
  \lambda_0
  :=\sup\left\{\mu>0:
  \int_\Omega|\nabla v|^p+\mu\int_\Omega|v|^p
  \geq\mu|\Omega|^{p/N}
  \quad\text{for every }v\in W^{1,p}(\Omega):\ \int_\Omega|v|^{p^*}=1\right\}.
\]
Then every minimizer of \(S_{\lambda,m}\) is constant for
\(0<\lambda<\lambda_0\), constant maps remain minimizers at
\(\lambda=\lambda_0\), and no nonzero constant map is a minimizer for
\(\lambda>\lambda_0\).
\end{theorem}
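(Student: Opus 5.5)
The plan is to reduce everything to the scalar problem by passing to the modulus $w:=|\boldsymbol u|$, then handle constants separately.

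\textbf{Scalar attainment.} Under $N\ge 2p-1$, the attainment of $S_\lambda$ for every $\lambda>0$ is \cite[Theorem~1.4]{CHS2026}, recalled in Section~2. A minimizer $\omega$ may be replaced by $|\omega|$, since $|\nabla|\omega||=|\nabla\omega|$ a.e. After rescaling, $|\omega|$ is a nonnegative weak solution of the scalar Neumann problem. The critical Moser iteration gives $L^\infty$ bounds, and Lieberman's boundary estimates give $C^{1,\alpha}(\overline\Omega)$ regularity. Vázquez's strong maximum principle, together with the connectedness of $\Omega$, then gives positivity up to the boundary.

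\textbf{Equality $S_{\lambda,m}=S_\lambda$ and synchronization.} Embedding the scalar problem via $\boldsymbol c\,\omega$ with $|\boldsymbol c|=1$ gives $S_{\lambda,m}\le S_\lambda$.

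For the converse, take $\boldsymbol u\in W^{1,p}(\Omega;\mathbb R^m)$ and set $w=|\boldsymbol u|\in W^{1,p}(\Omega)$. The pointwise Kato-type inequality $|\nabla w|\le|D\boldsymbol u|$ a.e. gives $\mathcal Q_\lambda(\boldsymbol u)\ge \mathcal Q_\lambda(w)\ge S_\lambda$. Hence $S_{\lambda,m}=S_\lambda$, and it is attained by $\boldsymbol c\,\omega$.

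Now let $\boldsymbol u$ be any vectorial minimizer. Equality forces $w$ to be a scalar minimizer, so $w>0$ on $\overline\Omega$ and $w\in C^1(\overline\Omega)$ by the first step. Equality also forces $|\nabla w|=|D\boldsymbol u|$ a.e. Where $w>0$, we have the identity
\[
|D\boldsymbol u|^2=|\nabla w|^2+w^2|D\boldsymbol\theta|^2,\qquad \boldsymbol\theta:=\boldsymbol u/w\in\mathbb S^{m-1}.
\]
Since $w$ is bounded below, $\boldsymbol\theta\in W^{1,p}$ with $D\boldsymbol\theta=0$ a.e. Connectedness of $\Omega$ then makes $\boldsymbol\theta$ equal to a constant $\boldsymbol c$, so $\boldsymbol u=\boldsymbol c\,w$.

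\textbf{Constant minimizers.} For constants, $\mathcal Q_\lambda(\boldsymbol c)=\lambda|\Omega|^{p/N}$. Hence a constant minimizes if and only if $S_\lambda=\lambda|\Omega|^{p/N}$, i.e. if and only if the inequality defining $\lambda_0$ holds at $\mu=\lambda$. Both sides are affine in $\mu$, and the difference $\int|\nabla v|^p+\mu(\int|v|^p-|\Omega|^{p/N})$ has a nonpositive $\mu$-coefficient by Hölder's inequality. So the admissible set of $\mu$ is an interval $(0,\lambda_0]$, closed at $\lambda_0$ because it is an intersection of closed conditions. This gives that constants are minimizers for $\lambda\le\lambda_0$ and are not minimizers for $\lambda>\lambda_0$.

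For $1<p\le 2$ we also need $\lambda_0>0$, which I would prove by a second-variation or Poincaré-type argument. Then, for $\lambda<\lambda_0$, we must show that no nonconstant minimizer exists. If $v$ were a nonconstant normalized minimizer at $\lambda$, pick $\mu\in(\lambda,\lambda_0)$. Hölder's inequality is strict for nonconstant $v$, so $\int|v|^p<|\Omega|^{p/N}$ and the value at $\mu$ drops strictly below $\mu|\Omega|^{p/N}$. This contradicts the admissibility of $\mu$.

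For $p>2$, I would show that every constant fails by perturbing: $\mathcal Q_\lambda(1+t\varphi)$ with $\int\varphi=0$. The gradient term is $t^p\int|\nabla\varphi|^p$, which is $o(t^2)$. The second-order expansion of the remaining terms is a negative multiple of $t^2\int\varphi^2$, coming from $p<p^*$. Hence the quotient decreases for small $t$.

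\textbf{Main obstacle.} I expect the hardest part to be proving $\lambda_0>0$ for $p\le2$ and getting $\lambda_0<\infty$, which needs the concentration estimates showing $S_\lambda<S/2^{p/N}$. The degenerate second variation when $p<2$ needs care: I would try the expansion with $\int|\nabla\varphi|^p$ dominating at small $t$ and combine it with compactness.
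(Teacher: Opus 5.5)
Most of your plan agrees with the paper:
\begin{itemize}
\item attainment of $S_\lambda$, and $S_{\lambda,m}=S_\lambda$ via $|\nabla|\boldsymbol u||\le|D\boldsymbol u|$;
\item synchronization, where your polar identity $|D\boldsymbol u|^2=|\nabla w|^2+w^2|D\boldsymbol\theta|^2$ is the paper's Lagrange identity in another form, and positivity up to $\partial\Omega$ via the Hopf part of V\'azquez's result plus the Neumann condition is a harmless strengthening of the interior Harnack argument;
\item the $p>2$ perturbation $1+t\varphi$;
\item the downward-closed, closed structure of the admissible set of $\mu$, with the strict-H\"older argument for $\lambda<\lambda_0$.
\end{itemize}
The genuine gap is the positivity of $\lambda_0$ for $1<p\le2$, which you leave open. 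It is the only nontrivial input of the threshold statement. If the admissible set were empty, neither ``every minimizer is constant for $0<\lambda<\lambda_0$'' nor ``constants remain minimizers at $\lambda=\lambda_0$'' would have any content.

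Your suggested route cannot close this gap. For $p<2$ the one-parameter expansion $\mathcal Q_\lambda(1+t\varphi)=\lambda|\Omega|^{p/N}+|t|^p|\Omega|^{-p/p^*}\int_\Omega|\nabla\varphi|^p+O(t^2)$ shows that constants are directional local minimizers for \emph{every} $\lambda>0$. That includes $\lambda>\lambda_0$, where they are not global minimizers, so this expansion cannot detect the threshold. What you need is an inequality uniform over all normalized $v$, and this is where $p\le2$ enters.

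The paper obtains it in Proposition~\ref{prop:small-lambda-constants}, following Adimurthi--Yadava, through the Euler--Lagrange equation:
\begin{itemize}
\item The rescaled positive minimizers $u_\lambda=S_\lambda^{1/(p^*-p)}w_\lambda$ satisfy $\|u_\lambda\|_{p^*}=S_\lambda^{1/(p^*-p)}\le(\lambda|\Omega|^{p/N})^{1/(p^*-p)}\to0$.
\item Test $-\Delta_pu_\lambda+\lambda u_\lambda^{p-1}=u_\lambda^{p^*-1}$ with $\varphi_\lambda=u_\lambda-|\Omega|^{-1}\int_\Omega u_\lambda$. Monotonicity of $t\mapsto t^{p-1}$, H\"older and $p\le2$ give $\int_\Omega|\nabla\varphi_\lambda|^p\le C\|u_\lambda\|_{p^*}^{p^*-p}\|\varphi_\lambda\|_{p^*}^p$.
\item Against Poincar\'e--Sobolev this forces $\varphi_\lambda\equiv0$ for small $\lambda$. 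Hence $S_\lambda=\lambda|\Omega|^{p/N}$ near $0$, and $\lambda_0>0$.
\end{itemize}

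Alternatively, your compactness idea works once you prove the uniform H\"older-defect bound $|\Omega|^{p/N}-\int_\Omega|v|^p\le C\|v-\bar v\|_{p^*}^p$ for $\|v\|_{p^*}=1$. This bound is trivial when $\|v-\bar v\|_{p^*}$ is bounded below. Near constants it follows from $|1+t|^p\ge1+pt$ and a second-order upper bound for $|1+t|^{p^*}$; the quadratic (or $p^*$-power) error is dominated by $\|v-\bar v\|_{p^*}^p$ precisely because $p\le2$. Combined with Poincar\'e--Sobolev, it puts a small $\mu>0$ in the admissible set directly.

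Finally, $\lambda_0<\infty$ needs no concentration estimates. Any $v$ with $\|v\|_{p^*}=1$ and $|v|$ nonconstant has $\int_\Omega|v|^p<|\Omega|^{p/N}$, so it excludes every sufficiently large $\mu$. The paper uses Lemma~\ref{lem:ferrero-strict-bound} only for the explicit bound $\lambda_0\le S/(2^{p/N}|\Omega|^{p/N})$.
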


The assertions concerning constant minimizers follow from
Lemma~\ref{lem:scalar-constants-not-minimizers},
Propositions~\ref{prop:small-lambda-constants} and
\ref{prop:optimal-constant-threshold}, and the synchronization
statement above.

The regularity assertion in Theorem~\ref{thm:bounded-vector} follows from
the global elliptic regularity theory for the scalar $p$-Laplacian, since
the vectorial minimizers are synchronized. More precisely, the
$L^\infty$ estimate follows from
\cite[Theorem~2.2]{PucciServadei2008}, while the boundary regularity theory
in \cite{Lieberman1988} yields
$\omega\in C^{1,\beta}(\overline\Omega)$ for some $\beta\in(0,1)$.

\begin{remark}
The same synchronization result holds under homogeneous Dirichlet boundary
conditions. In the classical Brezis--Nirenberg setting,
let $\lambda_{1,p}$ be the first Dirichlet eigenvalue of the $p$-Laplacian
and, for $0<\mu<\lambda_{1,p}$, define
\[
  S^{D}_{\mu,m}:=
  \inf_{\boldsymbol u\in W_0^{1,p}(\Omega;\mathbb R^m)
  \setminus\{\boldsymbol0\}}
  \frac{\displaystyle\int_\Omega
  \bigl(|D\boldsymbol u|^p-\mu|\boldsymbol u|^p\bigr)}
  {\displaystyle
  \left(\int_\Omega|\boldsymbol u|^{p^*}\right)^{p/p^*}}.
\]
Whenever the corresponding scalar constant $S^D_\mu$ is attained by a
positive function $\omega$, the argument of Theorem~\ref{thm:bounded-vector}
gives
\[
  S^D_{\mu,m}=S^D_\mu,
  \qquad
  \boldsymbol u=\boldsymbol c\,\omega,
  \quad \boldsymbol c\in\mathbb S^{m-1},
\]
for every vectorial minimizer. The required scalar existence results for the
critical Dirichlet problem are available, in the relevant ranges of the
dimension and of the parameter, in
\cite{ArioliGazzola1998,GarciaAzoreroPeral1987}. Thus the corresponding
Dirichlet version follows by combining those scalar results with the modulus
inequality and its equality case used here.

Related Brezis--Nirenberg-type existence and nonexistence
results for critical Dirichlet problems involving the
anisotropic $p$-Laplacian have recently been established
in \cite{BiagiEspositoRoncoroniVecchi2025}.
\end{remark}

Having treated the positive-mass Neumann problem and briefly discussed its
Dirichlet counterpart, we finally turn to the resonant Neumann case
$\lambda=0$:
\begin{equation}\tag{$\mathcal P_0$}\label{P0}
  \begin{cases}
    -\boldsymbol\Delta_p\boldsymbol u
      =|\boldsymbol u|^{p^*-2}\boldsymbol u
      &\text{in }\Omega,\\
    |D\boldsymbol u|^{p-2}D\boldsymbol u\,\nu=0
      &\text{on }\partial\Omega.
  \end{cases}
\end{equation}
Every weak solution of \eqref{P0} satisfies the compatibility condition
\[
  \int_\Omega
  |\boldsymbol u|^{p^*-2}\boldsymbol u=\boldsymbol0.
\]
Indeed, this identity follows by testing the equation with constant vectors
and using the homogeneous Neumann boundary condition. In particular, each
component $u^i$ which is not identically zero must change sign in $\Omega$.

We therefore introduce the centered constraint
\[
  \mathcal A_m:=\left\{
  \boldsymbol u\in W^{1,p}(\Omega;\mathbb R^m)\setminus\{\boldsymbol0\}:
  \int_\Omega|\boldsymbol u|^{p^*-2}\boldsymbol u=\boldsymbol0
  \right\}
\]
and define the resonant Sobolev constant by
\[
  S_{0,m}:=\inf_{\boldsymbol u\in\mathcal A_m}
  \frac{\displaystyle\int_\Omega|D\boldsymbol u|^p}
  {\displaystyle
  \left(\int_\Omega|\boldsymbol u|^{p^*}\right)^{p/p^*}}.
\]
Its scalar counterpart is
\[
  S_0:=\inf_{u\in\mathcal A_1}
  \frac{\displaystyle\int_\Omega|\nabla u|^p}
  {\displaystyle
  \left(\int_\Omega|u|^{p^*}\right)^{p/p^*}},
  \qquad
  \mathcal A_1:=\left\{
  u\in W^{1,p}(\Omega)\setminus\{0\}:
  \int_\Omega|u|^{p^*-2}u=0
  \right\}.
\]
This constraint excludes every nonzero constant map.

In the scalar case $p=2$, this constraint already appears in
\cite{PariniWeth2015} and is also used for the critical pure Neumann
problem in \cite{SaldanaTavares2022}. A corresponding scalar construction
for the $p$-Laplacian, including the case $p\neq 2$, is used in
\cite{McCurdySaldanaSchiera2026}. We establish here its vectorial
counterpart at the critical exponent. The main difficulty is that, when
\(p^*<2\), the map defining \(\mathcal A_m\) is not of class \(C^1\), so
\(\mathcal A_m\) cannot be handled directly as a smooth constraint.

We overcome this issue by constructing a homeomorphism
\[
  \mathcal H\colon
  W_\perp^{1,p}(\Omega;\mathbb R^m)\setminus\{\boldsymbol0\}
  \longrightarrow\mathcal A_m,
  \qquad
  \mathcal H(\boldsymbol u)
  =\boldsymbol u+\boldsymbol d(\boldsymbol u),
\]
where
\[
  W_\perp^{1,p}(\Omega;\mathbb R^m)
  :=\left\{\boldsymbol u\in W^{1,p}(\Omega;\mathbb R^m):
  \int_\Omega\boldsymbol u=\boldsymbol0\right\}.
\]
The variational problem is then transferred to
\[
  \mathcal M
  :=\mathcal H^{-1}(\mathbb S_{p^*,m}\cap\mathcal A_m)
  =\left\{\boldsymbol u\in W_\perp^{1,p}(\Omega;\mathbb R^m):
  \int_\Omega
  |\boldsymbol u+\boldsymbol d(\boldsymbol u)|^{p^*}=1\right\},
\]
which is a \(C^1\) manifold of codimension one. Ekeland's principle can
therefore be applied on \(\mathcal M\).
Related reductions are well known in generalized
Nehari manifold methods and have also been used for competitive systems; see
\cite{SzulkinWeth2010,ClappSzulkin2019,CaninoMauroCompetitive}. The novelty
claimed here concerns instead the attainment and global \(C^1\)-regularity
result below in the critical resonant setting, which 
appears to be new even
for \(m=1, p\ne2\).

\begin{theorem}\label{thm:resonant-existence}
Assume that {\(N(N-2p+1)>p\)}.
Then $S_{0,m}$ is attained by some
$\boldsymbol u\in\mathcal A_m\cap
C^1(\overline\Omega;\mathbb R^m)$. After multiplication by a suitable
positive constant, $\boldsymbol u$ satisfies \eqref{P0} and is nontrivial.
\end{theorem}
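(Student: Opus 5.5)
We will prove Theorem~\ref{thm:resonant-existence} by a concentration--compactness argument on the manifold $\mathcal M$. The threshold is the half-space Sobolev constant $2^{-p/N}S$, which governs concentration at a boundary point of a $C^2$ domain.

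The first step is the strict energy bound
\[
S_{0,m}<2^{-p/N}S.
\]
Fix a boundary point $x_0\in\partial\Omega$ where the mean curvature is positive. Such a point exists, for instance where a large sphere enclosing $\Omega$ first touches $\partial\Omega$. Take the truncated bubble $\varphi_\varepsilon=\eta\,U_{\varepsilon,x_0}$ and a fixed function $\psi\in C^1(\overline\Omega)$ supported away from $x_0$. Using a scalar competitor suffices, because scalar test functions are admissible in $\mathcal A_m$ via $\boldsymbol c\,u$. By continuity and monotonicity of $t\mapsto\int_\Omega|\varphi_\varepsilon-t\psi|^{p^*-2}(\varphi_\varepsilon-t\psi)$, we choose $t_\varepsilon$ so that $u_\varepsilon=\varphi_\varepsilon-t_\varepsilon\psi\in\mathcal A_1$.

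Since $\varphi_\varepsilon$ concentrates, its $L^{p^*-1}$ mass is $O(\varepsilon^{(N-p)/p})$. Hence $t_\varepsilon$ is small, of order $\varepsilon^{(N-p)/(p(p-1))}$ up to exponent bookkeeping. Because the supports of $\varphi_\varepsilon$ and $\psi$ are essentially disjoint, the quotient splits additively.

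We then invoke the boundary bubble estimates of Section~2. They give
\[
\int|\nabla\varphi_\varepsilon|^p\Big/\|\varphi_\varepsilon\|_{p^*}^p
\le 2^{-p/N}S - cH(x_0)\varepsilon+o(\varepsilon).
\]
The correction coming from $t_\varepsilon\psi$ costs $O(t_\varepsilon^{p})$ in the numerator, and only helps in the denominator. The dimensional hypothesis $N(N-2p+1)>p$ is exactly what makes this correction, together with the cutoff errors, $o(\varepsilon)$. I expect this exponent comparison to be the main obstacle. The plan is to compute $t_\varepsilon$ precisely from the constraint and compare $t_\varepsilon^p$ with $\varepsilon$.

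The second step is to produce a minimizing sequence. Take a minimizing sequence on $\mathcal M$ and apply Ekeland's principle on this $C^1$ manifold. Transporting back through $\mathcal H$ gives $\boldsymbol u_n\in\mathcal A_m$ with $\|\boldsymbol u_n\|_{p^*}=1$. We check that $\mathcal H$ and $\boldsymbol d$ are homogeneous and that the constraint is a Lagrange-type condition. From this we get that $-\boldsymbol\Delta_p\boldsymbol u_n-S_{0,m}|\boldsymbol u_n|^{p^*-2}\boldsymbol u_n$ tends to $0$ in the dual modulo constant vectors. Testing with constants shows the multiplier of the constants vanishes in the limit.

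Boundedness in $W^{1,p}$ follows from a Poincaré--Sobolev inequality on $\mathcal A_m$. Its proof goes by contradiction: a sequence with vanishing gradient converges to a constant, and constants are excluded by the constraint.

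The third step is compactness. Pass to a weak limit $\boldsymbol u_n\rightharpoonup\boldsymbol u$. Apply Lions' concentration--compactness to $|D\boldsymbol u_n|^p$ and $|\boldsymbol u_n|^{p^*}$, using pointwise a.e. convergence of gradients from the approximate equation. An atom at an interior point would cost at least $S_m=S$, and an atom at a boundary point at least $2^{-p/N}S$, by local reflection in the $C^2$ setting. A standard convexity argument then shows the energy is at least $\min(2^{-p/N}S,\dots)$ unless there are no atoms. The first step rules this out, so $\boldsymbol u_n\to\boldsymbol u$ strongly in $L^{p^*}$. Since the constraint passes to the limit, $\boldsymbol u\in\mathcal A_m$ and $\boldsymbol u$ is a minimizer.

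Finally, rescaling gives a weak solution of \eqref{P0}, nontrivial since its $L^{p^*}$ norm is $1$. For regularity, we first obtain $L^\infty$ bounds by Moser iteration adapted to systems; the Pucci--Servadei argument goes through using $|\boldsymbol u|$ and the structure $|D\boldsymbol u|^{p-2}D\boldsymbol u$. Then $C^{1,\beta}(\overline\Omega)$ follows from Lieberman-type boundary regularity for $p$-Laplacian systems with this Uhlenbeck structure and conormal boundary condition.
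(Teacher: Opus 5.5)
Your Step~1 has a genuine gap. With a fixed bump $\psi$ supported away from $x_0$, the competitor $\varphi_\varepsilon-t_\varepsilon\psi$ cannot reach the stated range of dimensions, and your claim that $N(N-2p+1)>p$ is exactly what makes the correction $o(\varepsilon)$ is false for this competitor.

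Work in your scaling: bubble of width $\varepsilon$, curvature gain of order $\varepsilon$. Take $\psi\geq0$. Since the supports are disjoint, the constraint reads $t_\varepsilon^{p^*-1}\int_\Omega\psi^{p^*-1}=\int_\Omega\varphi_\varepsilon^{p^*-1}\asymp\varepsilon^{(N-p)/p}$. Hence $t_\varepsilon\asymp\varepsilon^{(N-p)/(p(p^*-1))}$, not $\varepsilon^{(N-p)/(p(p-1))}$. The numerator then carries the penalty $t_\varepsilon^p\int_\Omega|\nabla\psi|^p\asymp\varepsilon^{(N-p)^2/(N(p-1)+p)}$, and nothing offsets it. Indeed, $\int_\Omega|\nabla\psi|^p>0$ is fixed, because a function vanishing near $x_0$ on the connected set $\Omega$ is not constant. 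Meanwhile the denominator only gains the higher-order term $t_\varepsilon^{p^*}\int_\Omega\psi^{p^*}$.

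Your construction therefore needs $(N-p)^2>N(p-1)+p$. This is strictly stronger than the hypothesis, which is equivalent to $N(N-p)>N(p-1)+p$; the left-hand sides differ by $p(N-p)$. For $p=2$, $N=4$ the hypothesis holds, yet $t_\varepsilon^2\asymp\varepsilon^{2/3}\gg\varepsilon$, so your competitor's quotient lies above $S/2^{p/N}$ for small $\varepsilon$.

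The missing idea, which is what the paper does, is to center with a \emph{constant}. Use $U_\varepsilon+d_\varepsilon$, with $d_\varepsilon\in\mathbb R$ given by Lemma~\ref{lem:centering}. Because $\nabla(U_\varepsilon+d_\varepsilon)=\nabla U_\varepsilon$, the numerator is untouched. Splitting the centering identity over the positive and negative sets gives $\int_\Omega|U_\varepsilon+d_\varepsilon|^{p^*-1}\leq2\int_\Omega U_\varepsilon^{p^*-1}$, hence $|d_\varepsilon|^{p^*-1}\leq C\int_\Omega U_\varepsilon^{p^*-1}$. The elementary bound $|a-b|^{p^*}\geq a^{p^*}-C(ba^{p^*-1}+b^{p^*})$ then shows that the $L^{p^*}$ mass decreases by at most $C\bigl(\int_\Omega U_\varepsilon^{p^*-1}\bigr)^{p^*/(p^*-1)}$. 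The exponent $p^*/(p^*-1)$, in place of the $p/(p^*-1)$ carried by your numerator penalty, is exactly what yields the condition $N(N-2p+1)>p$.

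The remainder of your outline is acceptable. Lions' concentration--compactness, with interior atoms costing $S$ and boundary atoms costing $2^{-p/N}S$, is a legitimate substitute for the paper's Cherrier-plus-Brezis--Lieb argument. For attainment it needs neither Ekeland nor a.e.\ convergence of gradients, though the $C^1$ structure of $\mathcal M$ is still required to derive the Euler--Lagrange system.

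For the boundary regularity, note that Lieberman's theorem is scalar. The paper instead proves $\boldsymbol v\in L^q$ for every finite $q$. It then uses $\int_\Omega|\boldsymbol v|^{p^*-2}\boldsymbol v=\boldsymbol0$ to solve a Neumann problem for $-\Delta\boldsymbol\psi=|\boldsymbol v|^{p^*-2}\boldsymbol v$ and rewrites the system in divergence form. Finally it applies the up-to-the-boundary estimates of \cite{BreitCianchiDieningKuusiSchwarzacher2017}.
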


Unlike the positive-mass case, synchronization at resonance remains an open
problem. Indeed, every nontrivial component of a solution of \eqref{P0}
changes sign and $|\boldsymbol u|\notin\mathcal A_1$, so the modulus argument
used in Theorem~\ref{thm:bounded-vector} does not apply.

\begin{remark}
For \(p=2\), the dimensional assumption in
Theorem~\ref{thm:resonant-existence} becomes
\(
  N(N-3)>2,
\)
which, since \(N\) is an integer, is equivalent to \(N>3\). This is
consistent with the dimensional range of the corresponding scalar results
in \cite{ComteKnaap1991}.
\end{remark}

\begin{figure}[H]
\centering
\begin{tikzpicture}
\begin{axis}[
  width=0.92\textwidth,
  height=0.62\textwidth,
  xmin=1,
  xmax=8,
  ymin=1,
  ymax=15,
  xlabel={$p$},
  ylabel={$N$},
  title={Existence regions in the $(p,N)$-plane},
  grid=major,
  axis on top,
  legend pos=north west,
  legend cell align=left,
  legend style={font=\small,fill=white,fill opacity=0.9,
    text opacity=1,draw=gray!50},
]
  \path[name path=topblue]
    (axis cs:1,15)--(axis cs:8,15);
  \addplot[name path=bluefill,draw=none,forget plot,domain=1:8,samples=2]
    {2*x-1};
  \addplot[red!20,draw=none,forget plot]
    fill between[of=topblue and bluefill];

  \path[name path=toporange]
    (axis cs:1,15)--(axis cs:7.741935,15);
  \addplot[name path=orangefill,draw=none,forget plot,
    domain=1:7.741935,samples=150]
    {(2*x-1+sqrt((2*x-1)^2+4*x))/2};
  \addplot[orange!35,draw=none,forget plot]
    fill between[of=toporange and orangefill];

  \addlegendimage{area legend,fill=red!20,draw=none}
  \addlegendentry{\textcolor{red}{Theorem~\ref{thm:bounded-vector},
    all $\lambda>0$: $N\geq2p-1$}}
  \addlegendimage{area legend,fill=orange!35,draw=none}
  \addlegendentry{Theorem~\ref{thm:resonant-existence}:
    $N(N-2p+1)>p$}

  \addplot[orange!90!black,thick,dashed,domain=1:8,samples=150]
    {(2*x-1+sqrt((2*x-1)^2+4*x))/2};
  \addlegendentry{
    $N=\dfrac{2p-1+\sqrt{(2p-1)^2+4p}}{2}$}
  \addplot[green!60!black,thick,dotted,domain=1:8,samples=2]
    {x};
  \addlegendentry{$N=p$}
  \addplot[red,thick,domain=1:8,samples=2]
    {2*x-1};
  \addlegendentry{\textcolor{red}{$N=2p-1$}}
\end{axis}
\end{tikzpicture}
\end{figure}

Two conclusions are nevertheless available in radially symmetric domains.

\begin{theorem}\label{prop:radial-synchronization}
Let $\Omega$ be a ball or an annulus centered at the origin. Then every
radial solution of \eqref{P0} is synchronized.
\end{theorem}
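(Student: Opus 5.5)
The plan is to reduce to an ODE system and exploit a conserved ``angular momentum'' coming from the rotational invariance of the system in the target space $\mathbb R^m$. Write a radial solution as $\boldsymbol u(x)=\boldsymbol v(r)$, $r=|x|\in I$, where $I=(0,b)$ for the ball and $I=(a,b)$ for the annulus. By the regularity used in Theorem~\ref{thm:resonant-existence} (or directly from the weak formulation tested with radial functions), $\boldsymbol v\in C^1(\overline I;\mathbb R^m)$. The function $r^{N-1}|\boldsymbol v'|^{p-2}\boldsymbol v'$ is then $C^1$, and satisfies
\[
  -\bigl(r^{N-1}|\boldsymbol v'|^{p-2}\boldsymbol v'\bigr)'=r^{N-1}|\boldsymbol v|^{p^*-2}\boldsymbol v
  \quad\text{in }I,
\]
with $\boldsymbol v'=\boldsymbol0$ at every boundary radius. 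At $r=0$ in the ball the weight $r^{N-1}$ vanishes anyway.

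\textbf{Step 1: conserved quantities.} For $1\le i<j\le m$ set
\[
  L_{ij}(r):=r^{N-1}|\boldsymbol v'|^{p-2}\bigl(v_i'v_j-v_j'v_i\bigr).
\]
Differentiating, the terms $r^{N-1}|\boldsymbol v'|^{p-2}v_i'v_j'$ cancel. The remaining terms give
\[
  L_{ij}'=-r^{N-1}|\boldsymbol v|^{p^*-2}(v_iv_j-v_jv_i)=0.
\]
Hence each $L_{ij}$ is constant on $I$. Evaluating at $r=b$, where $\boldsymbol v'(b)=\boldsymbol0$, gives $L_{ij}\equiv0$. Therefore $\boldsymbol v\wedge\boldsymbol v'=0$ at every point where $\boldsymbol v'\neq\boldsymbol0$; where $\boldsymbol v'=\boldsymbol0$ this is trivial. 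So $\boldsymbol v(r)$ and $\boldsymbol v'(r)$ are parallel for all $r\in I$.

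\textbf{Step 2: constant direction on nodal components.} On any connected component $J$ of $\{r\in I:\boldsymbol v(r)\neq\boldsymbol0\}$, consider $\boldsymbol e=\boldsymbol v/|\boldsymbol v|$. Its derivative is
\[
  \boldsymbol e'=\frac{\boldsymbol v'-(\boldsymbol e\cdot\boldsymbol v')\boldsymbol e}{|\boldsymbol v|}.
\]
This vanishes by Step 1, so $\boldsymbol v=\boldsymbol c_J\,|\boldsymbol v|$ on $J$ for a unit vector $\boldsymbol c_J$.

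\textbf{Step 3: gluing across zeros (main obstacle).} Let $\beta$ be an endpoint of a component $J$ with $\boldsymbol v(\beta)=\boldsymbol0$. There are two cases.

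If $\boldsymbol v'(\beta)\neq\boldsymbol0$, then $\boldsymbol v'(\beta)$ is a limit of vectors parallel to $\boldsymbol c_J$. Applying the same reasoning on the adjacent component $J'$, it is also parallel to $\boldsymbol c_{J'}$, so $\boldsymbol c_{J'}=\pm\boldsymbol c_J$. In addition, $\boldsymbol v'(\beta)\neq\boldsymbol0$ forces the zero at $\beta$ to be isolated, so such a component $J'$ exists.

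If instead $\boldsymbol v(\beta)=\boldsymbol v'(\beta)=\boldsymbol0$, I will show $\boldsymbol v\equiv\boldsymbol0$ near $\beta$. This is a uniqueness statement for the degenerate ODE, and it works because the problem is superlinear, $p^*>p$. Integrate from $\beta$ and put $M(r):=\sup_{[\beta,r]}|\boldsymbol v|$. The equation gives
\[
  |\boldsymbol v'|^{p-1}\le C(r-\beta)M^{p^*-1}.
\]
Integrating once more,
\[
  M\le C(r-\beta)^{p/(p-1)}M^{(p^*-1)/(p-1)},
\]
and the exponent $(p^*-1)/(p-1)$ exceeds $1$. Since $M$ is small near $\beta$, this forces $M=0$. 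Repeating the argument, the set where $\boldsymbol v$ vanishes together with $\boldsymbol v'$ is open and closed in the region beyond $\beta$, so $\boldsymbol v\equiv\boldsymbol0$ there.

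Combining the two cases, all nonzero parts of $\boldsymbol v$ lie on one line $\mathbb R\boldsymbol c$ with $\boldsymbol c\in\mathbb S^{m-1}$. Thus $\boldsymbol v=\boldsymbol c\,w$ with $w:=\boldsymbol c\cdot\boldsymbol v$. Dotting the system with $\boldsymbol c$ shows that $w$ is a radial solution of the scalar problem $-\Delta_pw=|w|^{p^*-2}w$ with Neumann condition, which is the desired synchronization.

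The delicate point is Step 3, since the $p$-Laplacian ODE lacks standard Lipschitz uniqueness when $\boldsymbol v'=\boldsymbol0$. I expect the superlinear Gronwall-type bound above to handle it.
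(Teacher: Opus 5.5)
Your proof is correct. Its skeleton is the paper's: the conserved quantities $L_{ij}$ killed by the Neumann condition, a constant direction $\boldsymbol c_J$ on each nodal component, gluing up to a sign across zeros where $\boldsymbol v'\neq\boldsymbol 0$, and triviality of solutions with zero Cauchy data.

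The genuine difference is how that last fact is proved. The paper (Lemma~\ref{lem:radial-zero-cauchy-data}) uses the Lyapunov functional $E=\frac{p-1}{p}|\boldsymbol V'|^p+\frac1{p^*}|\boldsymbol V|^{p^*}$. It satisfies $E'=-\frac{N-1}{r}|\boldsymbol V'|^p\le 0$ and $(r^kE)'=\frac{k}{p^*}r^{k-1}|\boldsymbol V|^{p^*}\ge 0$ for $k=(N-1)p'$. Hence $E(r_0)=0$ propagates at once to all of $\overline I$, in both directions and including $r_0=0$.

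Your local Picard-type estimate plus an open--closed argument uses no energy structure. It works for any right-hand side bounded by $C|\boldsymbol v|^{q}$ with $q\ge p-1$. In fact superlinearity is not what drives it: for $q=p-1$ the factor $(r-\beta)^{p/(p-1)}$ alone already forces $M=0$.

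The price is some bookkeeping you should make explicit. First, run the estimate on both sides of $\beta$; to the left the constant picks up the harmless factor $(\beta/r)^{N-1}$. Then $\{\boldsymbol v=\boldsymbol v'=\boldsymbol 0\}$ is open in the connected set $\overline I$, not only ``beyond $\beta$''. Second, for $\boldsymbol v\not\equiv\boldsymbol 0$ every zero is then simple, hence isolated, hence there are finitely many in $\overline I$. So the nodal components form a finite chain, and the pairwise sign-gluing produces a single $\boldsymbol c$. The paper also leaves this chaining step implicit.
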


On a ball, this leads to complete radial symmetry breaking.

\begin{corollary}\label{thm:ball-symmetry-breaking}
Let $\Omega=B_R$. Then every nontrivial solution of \eqref{P0} is
nonradial.
\end{corollary}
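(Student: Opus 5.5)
The plan is to combine Theorem~\ref{prop:radial-synchronization} with a radial Pohozaev identity and ODE uniqueness. Suppose, for contradiction, that $\boldsymbol u\neq\boldsymbol0$ is a radial solution of \eqref{P0} on $B_R$. By Theorem~\ref{prop:radial-synchronization}, $\boldsymbol u=\boldsymbol c\,\omega$ with $\boldsymbol c\in\mathbb S^{m-1}$ and $\omega$ a radial scalar function. Since $|D\boldsymbol u|=|\nabla\omega|$ and $|\boldsymbol u|=|\omega|$, the scalar function $\omega(x)=w(|x|)$ is a nontrivial radial weak solution of
\[
  -\Delta_p\omega=|\omega|^{p^*-2}\omega\ \text{in }B_R,
  \qquad |\nabla\omega|^{p-2}\partial_\nu\omega=0\ \text{on }\partial B_R .
\]
It therefore suffices to show that this scalar radial problem has only the trivial solution.

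\emph{Step 1: reduction to an ODE.} Writing $\phi_p(t)=|t|^{p-2}t$, standard radial regularity gives $w\in C^1([0,R])$ with $\phi_p(w')\in C^1((0,R])$, $w'(0)=0$, and
\[
  -(r^{N-1}\phi_p(w'))'=r^{N-1}|w|^{p^*-2}w \quad\text{on }(0,R).
\]
The Neumann condition gives $\phi_p(w'(R))=0$, hence $w'(R)=0$. I will work with this ODE throughout, rather than with a Pohozaev identity in $B_R$, so that no $C^2$ regularity is needed.

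\emph{Step 2: radial Pohozaev identity.} I will multiply the ODE by $r^N w'$ and by $r^{N-1}w$, integrate over $(\varepsilon,R)$, and let $\varepsilon\to0$. The boundary terms at $0$ vanish because $w$ and $w'$ are bounded. In the first identity I will use $\bigl(\phi_p(w')\bigr)'w'=\frac{p-1}{p}\bigl(|w'|^p\bigr)'$ in integrated form; this step needs some care where $w'$ vanishes, and I would handle it by working with $\phi_p(w')$ as the $C^1$ unknown. The two identities are:
\begin{itemize}
\item from $r^{N-1}w$, $\int_0^R r^{N-1}|w'|^p=\int_0^R r^{N-1}|w|^{p^*}$, where the boundary term vanishes since $w'(R)=0$;
\item from $r^N w'$, $\frac{N-p}{p}\int_0^R r^{N-1}|w'|^p-\frac{N}{p^*}\int_0^R r^{N-1}|w|^{p^*}=-\frac{R^N}{p^*}|w(R)|^{p^*}$, where the gradient boundary term again drops because $w'(R)=0$.
\end{itemize}
Since $\frac{N-p}{p}=\frac{N}{p^*}$, the left-hand side of the second identity vanishes by the first. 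Hence $w(R)=0$.

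\emph{Step 3: backward uniqueness (main obstacle).} We now have $w(R)=w'(R)=0$, and we must conclude $w\equiv0$. The ODE is degenerate or singular at points where $w'=0$ when $p\neq2$, so Cauchy--Lipschitz does not apply directly. I would first write the problem as a first-order system in $(w,\,z=r^{N-1}\phi_p(w'))$ near $r=R$:
\[
  w'=\phi_{p'}(z/r^{N-1}),\qquad z'=-r^{N-1}|w|^{p^*-2}w .
\]
Then I would integrate backward from $R$. With $M(r)=\sup_{[r,R]}|w|$, one gets $|z(s)|\le C(R-r)M^{p^*-1}$ for $s\in[r,R]$, and hence
\[
  M\le C'(R-r)^{1+1/(p-1)}\,M^{(p^*-1)/(p-1)}.
\]
Because $p^*>p$, the exponent $(p^*-1)/(p-1)$ exceeds $1$. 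If $M(r)>0$, dividing by $M$ gives $1\le C'(R-r)^{p/(p-1)}M^{(p^*-p)/(p-1)}$, which is impossible for $r$ close to $R$ since $M$ stays bounded. So $w\equiv0$ near $R$. Repeating the argument from the infimum of $\{r: w\equiv0 \text{ on } [r,R]\}$, with the same estimate there (away from $r=0$), gives $w\equiv0$ on $(0,R]$.

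This contradicts the nontriviality of $\omega$. Hence no nontrivial solution of \eqref{P0} on $B_R$ is radial. The key points are that the Neumann condition removes the gradient boundary term in the Pohozaev identity, forcing $w(R)=0$, and that the superlinearity $p^*>p$ supplies the backward uniqueness in Step~3.
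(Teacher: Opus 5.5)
Your proof is correct, but it follows a different route from the paper's.

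\textbf{The paper's route.} The paper uses the compatibility condition $\int_{B_R}|w|^{p^*-2}w=0$ to force $w$ to change sign. It rules out $w(0)=0$ via Lemma~\ref{lem:radial-zero-cauchy-data}. Then, taking the first zero $r_0\in(0,R)$, it observes that $w|_{B_{r_0}}$ would be a positive solution of the critical Dirichlet problem on a ball. That contradicts the Pohozaev-based nonexistence result of Guedda--V\'eron.

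\textbf{Your route.} You apply the radial Pohozaev identity on the whole ball, using the Neumann condition instead. Since $w'(R)=0$, the gradient boundary term drops. Criticality, $N/p^*=(N-p)/p$, together with the Nehari identity cancels the bulk terms exactly, leaving $R^N|w(R)|^{p^*}=0$. So $w$ has zero Cauchy data at $r=R$. Your backward estimate then forces $w\equiv0$. That estimate uses $p^*>p$ and is needed only away from the origin. It is a valid alternative to the energy-monotonicity proof of Lemma~\ref{lem:radial-zero-cauchy-data}, which you could simply have invoked.

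\textbf{What each buys.} Your argument is self-contained at the ODE level. It never uses the compatibility condition or the nodal structure of $w$, and it avoids importing the Dirichlet nonexistence theorem. It also isolates the ball-specific ingredient cleanly: on an annulus the same identity only gives $R_2^N|w(R_2)|^{p^*}=R_1^N|w(R_1)|^{p^*}$. The paper's argument is shorter given its lemma and the citation, and it makes the link with the classical critical Dirichlet problem explicit.

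\textbf{Notational point.} The multipliers $r^{N-1}w$ and $r^Nw'$ belong to the non-divergence form $-\Delta_pw=|w|^{p^*-2}w$. For the divergence-form ODE you wrote, the multipliers are $w$ and $rw'$. The two identities you state are nonetheless the correct ones.
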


In particular, when $\Omega$ is a ball, the minimizers furnished by
Theorem~\ref{thm:resonant-existence} necessarily break radial symmetry.
If $p=2$ and $\Omega$ is an annulus, every minimizer of $S_{0,m}$ is
nonradial as well. Indeed, a radial vectorial minimizer would be synchronized
by Theorem~\ref{prop:radial-synchronization}, and its scalar factor would
attain $S_0$, contradicting
\cite[Proposition~1.4]{SaldanaTavares2022}.

\subsection{Notation}

The integer \(N\) denotes the dimension of the ambient domain, while
\(m\geq1\) is the number of components of the unknown map. We write
\[
  \boldsymbol u=(u^1,\ldots,u^m),
  \qquad
  |\boldsymbol u|^2=\sum_{i=1}^m|u^i|^2,
\]
and denote by \(D\boldsymbol u\) its \(m\times N\) Jacobian matrix. The
Frobenius norm and product are given by
\[
  |D\boldsymbol u|^2=\sum_{i=1}^m|\nabla u^i|^2,
  \qquad
  D\boldsymbol u:D\boldsymbol v
  =\sum_{i=1}^m\nabla u^i\cdot\nabla v^i.
\]
We use
\[
  D^{1,p}(\mathbb R^N;\mathbb R^m)
  :=\left\{
  \boldsymbol u\in L^{p^*}(\mathbb R^N;\mathbb R^m):
  D\boldsymbol u\in L^p(\mathbb R^N;\mathbb R^{m\times N})
  \right\},
\]
endowed with the norm \(\|D\boldsymbol u\|_{L^p}\); its scalar counterpart
is denoted by \(D^{1,p}(\mathbb R^N)\).

For convenience, we recall the constants introduced above. The constant
\(S_m\) is the optimal constant for the critical embedding
\(
  D^{1,p}(\mathbb R^N;\mathbb R^m)
  \hookrightarrow L^{p^*}(\mathbb R^N;\mathbb R^m),
\)
while \(S_{\lambda,m}\) is the optimal constant for the critical embedding
of \(W^{1,p}(\Omega;\mathbb R^m)\), endowed with the norm induced by
\[
  \int_\Omega
  \bigl(|D\boldsymbol u|^p+\lambda|\boldsymbol u|^p\bigr),
\]
into \(L^{p^*}(\Omega;\mathbb R^m)\). Finally, \(S_{0,m}\) is the optimal
constant for the same critical embedding at resonance, restricted to maps
satisfying the centering condition
\[
  \int_\Omega|\boldsymbol u|^{p^*-2}\boldsymbol u=\boldsymbol0,
\]
with energy \(\int_\Omega|D\boldsymbol u|^p\). Their scalar counterparts are
\[
  S:=S_1,
  \qquad
  S_\lambda:=S_{\lambda,1},
  \qquad
  S_0:=S_{0,1}.
\]
For \(1\leq q\leq p^*\), we also set
\[
  \mathbb S_{q,m}
  :=\left\{
  \boldsymbol u\in W^{1,p}(\Omega;\mathbb R^m):
  \int_\Omega|\boldsymbol u|^q=1
  \right\}.
\]

\section{The scalar problem on bounded domains}
\label{sec:scalar-bounded}

{Throughout this section, we assume that \(N\geq2p-1\).}

In this section we collect the scalar results concerning $S_\lambda$: the
strict inequality below the boundary concentration threshold, the resulting
compactness of minimizing sequences, and a criterion excluding constant
minimizers.

\begin{lemma}[Cherrier's inequality, \cite{Cherrier1984}]\label{lem:cherrier}
For every $\eta>0$, there exists a constant $C_\eta>0$ such that
\begin{equation}\label{eq:cherrier}
  \left(\int_\Omega|u|^{p^*}\right)^{p/p^*}
  \leq
  \left(\frac{2^{p/N}}{S}+\eta\right)
  \int_\Omega|\nabla u|^p
  +C_\eta\int_\Omega|u|^p
  \qquad\text{for every }u\in W^{1,p}(\Omega).
\end{equation}
\end{lemma}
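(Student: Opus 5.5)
The plan is to derive Cherrier's inequality by localizing with a partition of unity, so that near each boundary point the problem becomes a half-space Sobolev inequality with constant \(2^{p/N}/S\). First, the half-space case. For \(v\in W^{1,p}(\mathbb R^N_+)\) with bounded support, reflect evenly across \(\{x_N=0\}\). The reflected function \(\tilde v\) lies in \(D^{1,p}(\mathbb R^N)\) and doubles both \(\int|\nabla v|^p\) and \(\int|v|^{p^*}\). Applying the sharp inequality with constant \(S\) to \(\tilde v\) gives
\[
  \Bigl(\int_{\mathbb R^N_+}|v|^{p^*}\Bigr)^{p/p^*}\le \frac{2^{p/N}}{S}\int_{\mathbb R^N_+}|\nabla v|^p ,
\]
because \(2^{-p/p^*}\cdot 2=2^{1-p/p^*}=2^{p/N}\). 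For functions supported in the interior, the plain inequality with the better constant \(1/S\) applies.

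Second, localize. Fix \(\epsilon>0\). Since \(\partial\Omega\) is \(C^2\), cover \(\partial\Omega\) by finitely many small sets \(U_j\) carrying \(C^1\) flattening diffeomorphisms \(\Phi_j\) onto half-balls with \(\|D\Phi_j-I\|_\infty\le\epsilon\), and add one interior open set \(U_0\Subset\Omega\). Take a partition of unity \(\sum_j\varphi_j^{p}=1\) on \(\overline\Omega\), choosing \(\varphi_j=\psi_j/(\sum_k\psi_k^p)^{1/p}\) with smooth \(\psi_j\). Writing \(\|\cdot\|_q\) for the \(L^q(\Omega)\) norm, since \(p^*/p>1\) the triangle inequality in \(L^{p^*/p}\) gives
\[
  \|u\|_{p^*}^p=\bigl\|\textstyle\sum_j|\varphi_ju|^p\bigr\|_{p^*/p}\le\sum_j\|\varphi_ju\|_{p^*}^p .
\]

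Third, bound each piece. Transport \(\varphi_ju\) by \(\Phi_j\) to the half-space and apply the first step. The Jacobian distortion costs a factor \((1+C\epsilon)\), so
\[
  \|\varphi_ju\|_{p^*}^p\le(1+C\epsilon)\frac{2^{p/N}}{S}\int|\nabla(\varphi_ju)|^p .
\]
Then use the elementary inequality \(|a+b|^p\le(1+\epsilon)|a|^p+C'_\epsilon|b|^p\) with \(a=\varphi_j\nabla u\) and \(b=u\nabla\varphi_j\). Summing over \(j\) and using \(\sum_j\varphi_j^p=1\), the main terms total \((1+C\epsilon)(1+\epsilon)\tfrac{2^{p/N}}{S}\int_\Omega|\nabla u|^p\), which is at most \(\bigl(\tfrac{2^{p/N}}{S}+\eta\bigr)\int_\Omega|\nabla u|^p\) once \(\epsilon\) is small. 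The lower-order terms are bounded by \(C_\eta\int_\Omega|u|^p\), since \(\max_j\|\nabla\varphi_j\|_\infty\) is finite for the fixed cover. The interior piece \(j=0\) is handled the same way, using the even better constant \(1/S\).

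The main obstacle is the flattening step. One has to check that a \(C^2\) boundary admits charts whose differentials are uniformly \(\epsilon\)-close to isometries. This is done by taking graph charts over the tangent plane on sufficiently small neighborhoods, since the derivative of the graph function vanishes at the base point and is continuous. One must also verify that the resulting change-of-variables factors enter only multiplicatively as \(1+O(\epsilon)\) in both the gradient and the \(L^{p^*}\) terms. A density argument (smooth functions up to the boundary are dense in \(W^{1,p}(\Omega)\)) completes the proof for all \(u\in W^{1,p}(\Omega)\).
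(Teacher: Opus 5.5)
Your proof is correct. The paper itself gives no argument for this lemma: it imports the inequality from \cite{Cherrier1984} and uses it as a black box, for instance in the defect estimate \eqref{eq:resonant-cherrier-defect}.

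What you supply is the standard localization proof behind Cherrier's result, and all the essential mechanisms are in place:
\begin{itemize}
\item Even reflection yields the half-space constant $2^{p/N}/S$, because both $\int|\nabla v|^p$ and $\int|v|^{p^*}$ double while $1-p/p^*=p/N$.
\item The normalization $\sum_j\varphi_j^p=1$, together with Minkowski's inequality in $L^{p^*/p}$, removes all cross terms. The sharp constant therefore survives summation up to a factor $1+O(\epsilon)$.
\item Everything else is absorbed into $C_\eta\int_\Omega|u|^p$.
\end{itemize}
Compared with the citation, your route is self-contained. It also shows that only compactness and $C^1$ regularity of $\partial\Omega$ enter. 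The $C^2$ assumption is used elsewhere in the paper, e.g.\ for the mean-curvature expansions in Lemma~\ref{lem:ferrero-strict-bound}, but not here.

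Three minor points:
\begin{itemize}
\item For non-integer $p$, your functions $\varphi_j=\psi_j(\sum_k\psi_k^p)^{-1/p}$ are only $C^1$. This is enough, since you only need $\|\nabla\varphi_j\|_\infty<\infty$.
\item Graph charts have differentials close to a rotation rather than to the identity. This is harmless, because the half-space inequality is invariant under rigid motions.
\item The final density step is not needed. The product rule, even reflection, and composition with $C^1$ diffeomorphisms up to the boundary all work directly in $W^{1,p}$.
\end{itemize}
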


{
\begin{lemma}\label{lem:ferrero-strict-bound}
{Assume that \(N\geq2p-1\).} For every \(\lambda>0\),
\[
  S_\lambda<\frac{S}{2^{p/N}}.
\]
\end{lemma}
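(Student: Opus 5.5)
Our plan is the classical boundary-concentration test-function argument of Adimurthi--Mancini/Wang type, adapted to the $p$-Laplacian as in \cite{CHS2026}. The idea is to place a truncated Aubin--Talenti bubble $U_{\varepsilon,x_0}$ at a boundary point $x_0\in\partial\Omega$ where the mean curvature $H(x_0)$ is positive. Such a point always exists, since $\Omega$ is bounded and of class $C^2$: a point of $\partial\Omega$ farthest from the origin will do. We then expand $\mathcal Q_\lambda(\varphi U_{\varepsilon,x_0})$ as $\varepsilon\to0$, where $\varphi$ is a cutoff equal to $1$ near $x_0$. The point is that, roughly, half of the bubble sits inside $\Omega$, which gives the leading term $S/2^{p/N}$. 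The positive mean curvature then produces a negative correction of order $\varepsilon$. The lower-order term $\lambda\int|u|^p$ and the truncation errors have to be shown to be of strictly smaller order, and this is exactly where the dimensional restriction $N\ge 2p-1$ enters.

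The steps are as follows.

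\textbf{Flattening.} Near $x_0$ we flatten the boundary by writing $\Omega$ locally as $\{x_N>h(x')\}$ with
\[
h(x')=\tfrac12\sum_j\kappa_j x_j^2+o(|x'|^2),
\]
where $\kappa_j$ are the principal curvatures, so that $\sum_j\kappa_j=(N-1)H(x_0)>0$.

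\textbf{Numerator and denominator.} In this chart we compute
\[
\int_\Omega|\nabla(\varphi U_\varepsilon)|^p
=\tfrac12\|\nabla U\|_p^p-A\,H(x_0)\,\varepsilon+o(\varepsilon),
\]
and
\[
\int_\Omega(\varphi U_\varepsilon)^{p^*}
=\tfrac12\|U\|_{p^*}^{p^*}-B\,H(x_0)\,\varepsilon+o(\varepsilon).
\]
Here $A,B>0$ are explicit radial integrals, obtained by integrating over the thin region between $\{x_N=0\}$ and $\{x_N=h\}$. The two identities $\|\nabla U\|_p^p=\|U\|_{p^*}^{p^*}=S^{N/p}$ then reduce the sign of the resulting correction to a comparison of one-dimensional Beta-type integrals, and this comparison is known to give a strictly negative coefficient. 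The constants $A$ and $B$ are finite provided the integrals $\int|x'|^2|\nabla U|^p$ and $\int |x'|^2U^{p^*}$ over a half-ball converge at the scale $\varepsilon$. Since $|\nabla U|^p\sim |x|^{-p(N-1)/(p-1)}$ at infinity, convergence of the first integral requires
\[
p(N-1)/(p-1)-2>N,
\quad\text{i.e.}\quad N>p^2\text{-type bounds}.
\]
If it fails, we only obtain $o(\varepsilon)$ after checking that the tail contributes $O(\varepsilon^{(N-p)/(p-1)})$. The cutoff errors are likewise $O(\varepsilon^{(N-p)/(p-1)})$.

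\textbf{Mass term.} Finally,
\[
\lambda\int_\Omega(\varphi U_\varepsilon)^p=
\begin{cases}
O(\varepsilon^p) & N>p^2,\\
O(\varepsilon^p|\log\varepsilon|) & N=p^2,\\
O(\varepsilon^{(N-p)/(p-1)}) & N<p^2.
\end{cases}
\]

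\textbf{Conclusion.} Putting these together gives
\[
\mathcal Q_\lambda(\varphi U_\varepsilon)\le \frac{S}{2^{p/N}}-cH(x_0)\varepsilon+o(\varepsilon),
\]
provided every error exponent exceeds $1$. For $p>1$ we have $\varepsilon^p=o(\varepsilon)$. The remaining condition is $(N-p)/(p-1)>1$, i.e.\ $N>2p-1$. Taking $\varepsilon$ small then yields $S_\lambda<S/2^{p/N}$.

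The main obstacle is the borderline case $N=2p-1$, where the truncation and tail errors are exactly of order $\varepsilon$ and compete with the curvature gain. Our first attempt would be to keep the exact constants in that case. In the borderline dimension, the curvature integrals $\int|x'|^2|\nabla U|^p$ over the expanding half-ball themselves diverge logarithmically. The curvature contribution then becomes $-cH(x_0)\varepsilon|\log\varepsilon|$, which dominates the $O(\varepsilon)$ errors. Verifying this logarithmic gain, with the correct sign, is the delicate computation, and it is precisely the estimate contained in \cite[Theorem~1.4]{CHS2026}. If that verification became too heavy, we would simply invoke that result for the bound, since its attainment proof rests on exactly this strict inequality.
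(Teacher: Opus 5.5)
Your route is the paper's: a bubble concentrating at a boundary point of positive mean curvature, a first-order expansion whose sign comes from comparing two curvature constants, and a logarithmic gain at $N=2p-1$. The paper imports all of these expansions from \cite[Lemmas~1--4 and Sections~5.1--5.2]{Ferrero2006} with $\delta=\varepsilon^{(p-1)/p}$, so its $\varepsilon^{(p-1)/p}$ is your $\varepsilon$. Your truncation and mass estimates are correct, and so is the threshold $N>2p-1$ they produce. However, your analysis of when the curvature constant $A$ is finite is wrong, and the fallback you propose would break the argument.

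The loss in the numerator comes from the thin layer between $\{x_N=0\}$ and $\{x_N=h(x')\}$. To leading order it equals $\frac{H(x_0)}{2}\,\varepsilon\int_{\{|y'|<\rho/\varepsilon\}}|y'|^2|\nabla U(y',0)|^p\,dy'$. This is an integral over $\mathbb R^{N-1}$, not over a half-ball in $\mathbb R^N$; an $N$-dimensional integral with weight $|y'|^2$ would scale like $\varepsilon^2$, not $\varepsilon$. The integrand decays like $|y'|^{2-p(N-1)/(p-1)}$, so the integral converges iff $p(N-1)/(p-1)-2>N-1$, i.e.\ exactly when $N>2p-1$, and it diverges logarithmically at $N=2p-1$. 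You counted $N$ dimensions instead. Moreover, your inequality $p(N-1)/(p-1)-2>N$ is equivalent to $N>3p-2$, not to an ``$N>p^2$'' bound. Concretely, set $s:=(N+1)(p-1)/p$ and use the explicit bubble. Your Beta-type comparison then reads
\[
  \frac AB=\frac{N-p}{N(p-1)}\,
  \frac{\mathrm B(s+1,N-s-1)}{\mathrm B(s,N-s)}
  =\frac{N-p}{N}\,\frac{N+1}{N-2p+1},
\]
with $\mathrm B$ Euler's Beta function. This is finite precisely when $N-s-1=(N-2p+1)/p>0$, and it is the ratio the paper quotes from Ferrero.

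The real problem is the fallback ``if it fails, we only obtain $o(\varepsilon)$''. Without a first-order gain in the numerator, the loss $-BH(x_0)\varepsilon$ in the denominator gives $\mathcal Q_\lambda=\frac{S}{2^{p/N}}\bigl[1+\frac{2(N-p)B}{NK_2}H(x_0)\varepsilon+o(\varepsilon)\bigr]$, which exceeds $S/2^{p/N}$. As written, then, your argument fails in every dimension above $2p-1$ where you believe the integral diverges. With the dimension count corrected, all thresholds coincide at $N>2p-1$, and the argument becomes the paper's.

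Two smaller points. First, the strict negativity of the first-order coefficient is the heart of the lemma and should not be left as ``known''. With $K_1=K_2$ you need $A/B>(N-p)/N$, and the display gives this because $\frac{N+1}{N-2p+1}>1$. Second, for $N=2p-1$ the attainment statement \cite[Theorem~1.4]{CHS2026} does not by itself yield a strict inequality, since attainment is compatible with equality. Cite the logarithmic expansion directly, as the paper does with \cite[Section~5.2]{Ferrero2006}.
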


\begin{proof}
{
Since \(\Omega\) is bounded and of class \(C^2\), one may choose a
supporting point \(x_0\in\partial\Omega\) at which the mean curvature is positive. After a translation and a rotation, \(x_0=0\),
and, near the origin,
\[
  \Omega=\{(x',x_N):x_N>h(x')\},
  \qquad
  h(x')=\frac12\sum_{j=1}^{N-1}\kappa_jx_j^2+o(|x'|^2),
\]
where
\(
  H(0):=(N-1)^{-1}\sum_{j=1}^{N-1}\kappa_j>0.
\)
We take the bubble \(U_\varepsilon\) defined in \eqref{eq:bubble} with
\(\delta=\varepsilon^{(p-1)/p}\) and center \(x_0=0\), namely
\(
  U_\varepsilon=U_{\varepsilon^{(p-1)/p},0}.
\)
Set
\[
  K_1:=\int_{\mathbb R^N}|\nabla U_\varepsilon|^p,
  \qquad
  K_2:=\int_{\mathbb R^N}U_\varepsilon^{p^*}.
\]
These quantities are independent of \(\varepsilon\), \(K_1=K_2\), and
\(
  S=K_1/K_2^{p/p^*}.
\)

Let \(\alpha:=p/p^*=(N-p)/N\). First suppose that \(N>2p-1\).
The estimates in \cite[Lemmas~1--4 and Section~5.1]{Ferrero2006}, after
the change of parameter \(\tau=\varepsilon^{(p-1)/p}\), yield positive
constants \(A_{N,p}\) and \(B_{N,p}\) such that
\[
  \int_\Omega|\nabla U_\varepsilon|^p
  =\frac{K_1}{2}-A_{N,p}H(0)\varepsilon^{(p-1)/p}
  +o\bigl(\varepsilon^{(p-1)/p}\bigr),
\]
\[
  \int_\Omega U_\varepsilon^{p^*}
  =\frac{K_2}{2}-B_{N,p}H(0)\varepsilon^{(p-1)/p}
  +o\bigl(\varepsilon^{(p-1)/p}\bigr),
  \qquad
  \int_\Omega U_\varepsilon^p
  =o\bigl(\varepsilon^{(p-1)/p}\bigr).
\]
Moreover, the computation in \cite[Section~5.1]{Ferrero2006} gives
\[
  \frac{A_{N,p}}{B_{N,p}}
  =\frac{N-p}{N}\frac{N+1}{N-2p+1}
  >\frac{N-p}{N}\frac{K_1}{K_2}.
\]
Consequently,
\[
\begin{aligned}
  \mathcal Q_\lambda(U_\varepsilon)
  &=\frac{S}{2^{p/N}}
  \left[1+2H(0)\left(
  \alpha\frac{B_{N,p}}{K_2}-\frac{A_{N,p}}{K_1}
  \right)\varepsilon^{(p-1)/p}
  +o\bigl(\varepsilon^{(p-1)/p}\bigr)\right]
  <\frac{S}{2^{p/N}}
\end{aligned}
\]
for all sufficiently small \(\varepsilon>0\).

If \(N=2p-1\), the borderline estimates in
\cite[Lemmas~1--4 and Section~5.2]{Ferrero2006} instead give a constant
\(A_p>0\) such that
\[
  \int_\Omega|\nabla U_\varepsilon|^p
  =\frac{K_1}{2}
  -A_pH(0)\varepsilon^{(p-1)/p}|\log\varepsilon|
  +o\bigl(\varepsilon^{(p-1)/p}|\log\varepsilon|\bigr),
\]
whereas
\[
  \int_\Omega U_\varepsilon^{p^*}
  =\frac{K_2}{2}+O\bigl(\varepsilon^{(p-1)/p}\bigr),
  \qquad
  \int_\Omega U_\varepsilon^p
  =O\bigl(\varepsilon^{(p-1)/p}\bigr).
\]
It follows that
\[
  \mathcal Q_\lambda(U_\varepsilon)
  =\frac{S}{2^{p/N}}
  \left[1-\frac{2A_pH(0)}{K_1}
  \varepsilon^{(p-1)/p}|\log\varepsilon|
  +o\bigl(\varepsilon^{(p-1)/p}|\log\varepsilon|\bigr)\right]
  <\frac{S}{2^{p/N}}
\]
for all sufficiently small \(\varepsilon>0\). In either case,
\(
  S_\lambda\leq\mathcal Q_\lambda(U_\varepsilon)<S/2^{p/N},
\)
which proves the claim.
}
\end{proof}
}

\begin{theorem}\label{thm:bounded-compactness}
Let $\lambda>0$ and assume that
\[
  S_\lambda<\frac{S}{2^{p/N}}.
\]
Then $S_\lambda$ is attained by a function
$\omega\in W^{1,p}(\Omega)\setminus\{0\}$ which is positive in $\Omega$.
\end{theorem}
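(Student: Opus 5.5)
Given the assumption $S_\lambda<S/2^{p/N}$, I would use the direct method combined with Cherrier's inequality (Lemma~\ref{lem:cherrier}). Take a minimizing sequence $(u_n)\subset W^{1,p}(\Omega)$ normalized by $\int_\Omega|u_n|^{p^*}=1$. Since $\mathcal Q_\lambda(|u|)=\mathcal Q_\lambda(u)$, I replace $u_n$ by $|u_n|$ and assume $u_n\ge0$. Then $\int_\Omega(|\nabla u_n|^p+\lambda|u_n|^p)\to S_\lambda$, so $(u_n)$ is bounded in $W^{1,p}(\Omega)$, using $\lambda>0$. Up to a subsequence, $u_n\rightharpoonup\omega$ weakly in $W^{1,p}(\Omega)$, $u_n\to\omega$ strongly in $L^p(\Omega)$ (Rellich, as $\Omega$ is $C^2$ and bounded), and $u_n\to\omega$ a.e. In particular $\omega\ge0$.

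The key step is to rule out loss of mass in $L^{p^*}$. Set $v_n:=u_n-\omega$, so $v_n\rightharpoonup0$ in $W^{1,p}$ and $v_n\to0$ in $L^p$.
\begin{itemize}
\item By Brezis--Lieb, $1=\int|\omega|^{p^*}+\int|v_n|^{p^*}+o(1)$.
\item Again by Brezis--Lieb, applied to the gradients after passing to a.e. convergence of $\nabla u_n$, or else replaced by weak lower semicontinuity in the convex form below, $\int|\nabla u_n|^p\ge\int|\nabla\omega|^p+\int|\nabla v_n|^p+o(1)$.
\end{itemize}
To avoid needing a.e. convergence of gradients, I would argue with convexity. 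Put $a:=\int|\omega|^{p^*}\in[0,1]$ and $L:=\lim\int|\nabla v_n|^p$ along a subsequence. Cherrier's inequality applied to $v_n$ gives $(1-a)^{p/p^*}\le(2^{p/N}/S+\eta)L$, because $\int|v_n|^p\to0$. If I only have weak lower semicontinuity, I instead apply Cherrier to $u_n$ directly: $1\le(2^{p/N}/S+\eta)\int|\nabla u_n|^p+C_\eta\int u_n^p$. This alone is not sharp enough, so the splitting is needed. I therefore expect to justify $\int|\nabla u_n|^p=\int|\nabla\omega|^p+\int|\nabla v_n|^p+o(1)$ via a.e. gradient convergence. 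This follows from the Euler--Lagrange equation if the minimizing sequence is chosen, by Ekeland's principle, to be a Palais--Smale sequence on the constraint. The standard Boccardo--Murat type argument then gives $\nabla u_n\to\nabla\omega$ a.e. This is the main technical obstacle. An alternative is to invoke the Brezis--Lieb lemma for the $p$-energy only in the form of the inequality obtained via Lions' concentration-compactness measures.

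Granting the splitting, I would get
\[
S_\lambda=\int(|\nabla\omega|^p+\lambda\omega^p)+L\ge S_\lambda a^{p/p^*}+\Bigl(\tfrac{2^{p/N}}{S}+\eta\Bigr)^{-1}(1-a)^{p/p^*}.
\]
Letting $\eta\to0$ and using $S/2^{p/N}>S_\lambda$, if $a<1$ we obtain $S_\lambda>S_\lambda\bigl(a^{p/p^*}+(1-a)^{p/p^*}\bigr)\ge S_\lambda$, a contradiction, since $p/p^*<1$ makes $t\mapsto t^{p/p^*}$ strictly subadditive. (When $a=0$ the inequality is still strict because of the strict gap.) Hence $a=1$, so $\int\omega^{p^*}=1$, and weak lower semicontinuity shows that $\omega$ attains $S_\lambda$. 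An equivalent route is Lions' second concentration-compactness lemma: atoms at interior points cost at least $S\nu_j^{p/p^*}$ and atoms at boundary points cost at least $(S/2^{p/N})\nu_j^{p/p^*}$, which is excluded by the same subadditivity argument.

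Positivity comes last. After rescaling, $\omega\ge0$, $\omega\not\equiv0$ is a weak solution of $-\Delta_p\omega+\lambda\omega^{p-1}=\omega^{p^*-1}$ with Neumann conditions. Moser iteration (Pucci--Servadei) gives $\omega\in L^\infty$, and Lieberman's theory gives $\omega\in C^{1,\beta}(\overline\Omega)$. Since $-\Delta_p\omega+\lambda\omega^{p-1}\ge0$, Vázquez's strong maximum principle then yields $\omega>0$ in $\Omega$, using that $\Omega$ is connected.
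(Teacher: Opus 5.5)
Your proof is correct, but it takes a different route from the paper. The paper does not reprove compactness here. It gets attainment in one line from \cite[Lemma~2.1]{CHS2026}, with $\Gamma_0=\varnothing$, $\Gamma_1=\partial\Omega$, $\alpha\equiv\lambda$, $\beta\equiv0$. It then passes to $|\omega|$ and obtains $\omega>0$ from the $L^\infty$ bound of \cite{PucciServadei2008} and Trudinger's Harnack inequality \cite{Trudinger1967}.

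You instead give a self-contained compactness argument:
\begin{enumerate}[(i)]
\item Ekeland's principle on the $L^{p^*}$-sphere;
\item a.e.\ convergence of gradients;
\item Brezis--Lieb splitting of both mass and energy;
\item Lemma~\ref{lem:cherrier} applied to the defect $v_n$, where the $C_\eta\int_\Omega|v_n|^p$ term vanishes by Rellich;
\item strict subadditivity of $t\mapsto t^{p/p^*}$.
\end{enumerate}
This is the scalar, positive-mass version of what the paper itself does at resonance in Lemma~\ref{lem:resonant-energy-splitting} and Theorem~\ref{thm:resonant-compactness}. There the obstacle you flag is handled by \cite{DalMasoMurat1998}. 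In your scalar Neumann setting the Boccardo--Murat argument works locally in $\Omega$, because $\lambda|u_n|^{p-2}u_n$ converges in $L^{p'}$ and $\mu_n|u_n|^{p^*-2}u_n$ is bounded in $L^1$. Your version shows exactly where the threshold $S/2^{p/N}$ enters (it rules out mass escaping into a boundary concentration) and treats the positive-mass and resonant cases uniformly. The paper's citation is shorter, and the cited lemma covers more general mixed boundary data and coefficients.

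Three small remarks.
\begin{enumerate}[(i)]
\item Drop the suggestion that the energy splitting could come from weak lower semicontinuity alone. For $1<p<2$ the inequality $\int|\nabla u_n|^p\geq\int|\nabla\omega|^p+\int|\nabla v_n|^p+o(1)$ can fail for oscillating gradients. For instance, on $(0,1)$ take $u_n'=1+\operatorname{sign}\sin(2\pi nx)$ and $\omega(x)=x$; the inequality reads $2^{p-1}\geq2$. So the a.e.\ gradient convergence you end up relying on is genuinely needed.
\item The Ekeland sequence need not be nonnegative. It is, however, $o(1)$-close in $W^{1,p}$ to your nonnegative minimizing sequence, so its weak limit is still $\omega\geq0$.
\item V\'azquez's strong maximum principle needs the $C^1$ regularity you obtain from Lieberman, together with $\int_0(s\beta(s))^{-1/p}\,ds=\infty$. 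This holds for $\beta(s)=\lambda s^{p-1}$. The paper's Harnack route needs only the $L^\infty$ bound.
\end{enumerate}
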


\begin{proof}
Apply \cite[Lemma~2.1]{CHS2026} with
$\Gamma_0=\varnothing$, $\Gamma_1=\partial\Omega$,
$\alpha(x)\equiv\lambda$, and $\beta(x)\equiv0$.
The coercivity assumption is satisfied since $\lambda>0$,
and the corresponding Sobolev quotient is precisely
$\mathcal{Q}_\lambda$. Hence the strict inequality
$S_\lambda<S/2^{p/N}$ implies that $S_\lambda$ is attained.

Replacing a minimizer by its absolute value, we obtain
a nonnegative minimizer $\omega$, which we normalize by
$\int_\Omega \omega^{p^*}\,dx=1$. Its Euler--Lagrange equation is
\[
\begin{cases}
-\Delta_p\omega+\lambda\omega^{p-1}
    =S_\lambda\omega^{p^*-1} & \text{in }\Omega,\\
|\nabla\omega|^{p-2}\partial_\nu\omega=0
    & \text{on }\partial\Omega.
\end{cases}
\]
The $L^\infty$ estimate in
\cite[Theorem~2.2]{PucciServadei2008},
followed by the Harnack inequality
\cite{Trudinger1967}, yields $\omega>0$ in $\Omega$.
\end{proof}
\begin{lemma}\label{lem:scalar-constants-not-minimizers}
If $p>2$, then no nonzero constant function attains $S_\lambda$, for any
$\lambda>0$.
\end{lemma}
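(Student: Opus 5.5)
The plan is a second-variation argument at the constant. Suppose a nonzero constant $c$ attains $S_\lambda$. By homogeneity and evenness of $\mathcal Q_\lambda$ we may take $c=1$. We then perturb in a direction $\varphi\in C^1(\overline\Omega)$ with $\int_\Omega\varphi=0$ and $\varphi\not\equiv0$, and show that $\mathcal Q_\lambda(1+t\varphi)<\mathcal Q_\lambda(1)$ for small $t$. This contradicts minimality.

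First compute $\mathcal Q_\lambda(1)=\lambda|\Omega|^{1-p/p^*}=\lambda|\Omega|^{p/N}$. Next expand numerator and denominator of $\mathcal Q_\lambda(1+t\varphi)$ in $t$.

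\emph{Gradient term.} It is exactly $|t|^p\int_\Omega|\nabla\varphi|^p$.

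\emph{Lower-order terms.} For small $|t|$ we have $1+t\varphi>0$, so Taylor expansion gives
\[
\int_\Omega|1+t\varphi|^q=|\Omega|+qt\int_\Omega\varphi+\tfrac{q(q-1)}{2}t^2\int_\Omega\varphi^2+O(t^3)
\]
for $q=p$ and $q=p^*$. Since $\int_\Omega\varphi=0$, the linear terms vanish. The quotient then becomes
\[
\mathcal Q_\lambda(1+t\varphi)=\lambda|\Omega|^{p/N}\Bigl[1+\tfrac{t^2}{2|\Omega|}\bigl(p(p-1)-p(p^*-1)\bigr)\textstyle\int_\Omega\varphi^2\Bigr]+|t|^p|\Omega|^{-p/p^*}\textstyle\int_\Omega|\nabla\varphi|^p+O(t^3).
\]
To obtain the $t^2$ coefficient, write the denominator as $\bigl(|\Omega|+\tfrac{p^*(p^*-1)}{2}t^2\int\varphi^2\bigr)^{p/p^*}$ and expand it to first order in $t^2$. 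The $t^2$ coefficient is then
\[
\frac{\lambda|\Omega|^{p/N}}{2|\Omega|}\,p\,(p-p^*)\int_\Omega\varphi^2,
\]
which is strictly negative because $p^*>p$.

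\emph{Conclusion for $p>2$.} The gradient contribution is of order $|t|^p=o(t^2)$ as $t\to0$. Hence $\mathcal Q_\lambda(1+t\varphi)<\mathcal Q_\lambda(1)$ for $t$ small, and no nonzero constant attains $S_\lambda$.

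The main care point is bookkeeping of the expansion of the quotient: the numerator expansion must be combined with the expansion of the denominator raised to the power $-p/p^*$, checking that the combined second-order coefficient is $p(p-p^*)<0$. One also needs to justify the uniform Taylor remainders, which holds because $\varphi$ is bounded. The role of $p>2$ is exactly that $|t|^p$ is negligible against $t^2$. For $p\le2$ the argument fails, which is consistent with the threshold $\lambda_0$ in Theorem~\ref{thm:bounded-vector}.
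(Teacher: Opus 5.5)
Your proposal is correct and follows essentially the same route as the paper. The paper also perturbs the constant by a mean-zero $t\varphi$, Taylor-expands the $L^p$ and $L^{p^*}$ terms to get the negative second-order coefficient $-p(p^*-p)/(2c^2|\Omega|)$, and uses $p>2$ exactly to make $|t|^p\int_\Omega|\nabla\varphi|^p=o(t^2)$. The only cosmetic differences are that the paper keeps a general $c>0$ and takes $\varphi\in C_c^\infty(\Omega)$, whereas you normalize $c=1$ by $0$-homogeneity and take $\varphi\in C^1(\overline\Omega)$.
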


\begin{proof}
Let $p>2$, fix $c>0$, and choose a nonzero function
$\varphi\in C_c^\infty(\Omega)$ such that
\[
  \int_\Omega\varphi=0.
\]
For sufficiently small $|t|$, set $u_t:=c+t\varphi>0$. For every $q>1$,
Taylor's formula gives
\begin{equation}\label{eq:constant-expansion-Lq}
  \int_\Omega|u_t|^q
  =c^q|\Omega|
  +\frac{q(q-1)}{2}c^{q-2}t^2
  \int_\Omega\varphi^2+o(t^2).
\end{equation}
Moreover, since $p>2$,
\[
  \int_\Omega|\nabla u_t|^p
  =|t|^p\int_\Omega|\nabla\varphi|^p=o(t^2).
\]
Applying \eqref{eq:constant-expansion-Lq} with $q=p$ and $q=p^*$, we
obtain
\[
  \mathcal Q_\lambda(u_t)
  =\lambda|\Omega|^{p/N}
  \left[
  1-
  \frac{p(p^*-p)}{2c^2|\Omega|}t^2
  \int_\Omega\varphi^2+o(t^2)
  \right].
\]
Since $p^*>p$, it follows that
\[
  \mathcal Q_\lambda(u_t)
  <\mathcal Q_\lambda(c)=\lambda|\Omega|^{p/N}
\]
for every sufficiently small $t\neq0$. Thus no nonzero constant function
can attain $S_\lambda$.
\end{proof}

\begin{proposition}\label{prop:small-lambda-constants}
Assume that \(1<p\leq2\). Then there exists \(\overline\lambda>0\) such
that every scalar minimizer of \(S_\lambda\) is constant whenever
\(0<\lambda<\overline\lambda\).
\end{proposition}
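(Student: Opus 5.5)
We argue by contradiction with a blow-down/compactness scheme as \(\lambda\to0^+\). Suppose there are \(\lambda_k\to0^+\) and nonconstant minimizers \(\omega_k\) of \(S_{\lambda_k}\). By Theorem~\ref{thm:bounded-compactness} and Lemma~\ref{lem:ferrero-strict-bound} we may take \(\omega_k>0\) and normalize \(\int_\Omega\omega_k^{p^*}=1\).

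\textbf{Step 1: degeneration to a constant.} Testing \(\mathcal Q_{\lambda_k}\) with constants gives
\[
  S_{\lambda_k}\le\lambda_k|\Omega|^{p/N}\to0 .
\]
Hence \(\int_\Omega|\nabla\omega_k|^p\le\lambda_k|\Omega|^{p/N}\to0\) and \(\lambda_k\int_\Omega\omega_k^p\le\lambda_k|\Omega|^{p/N}\). Since \(\omega_k\) is bounded in \(L^{p^*}\), it is bounded in \(W^{1,p}\). By Poincaré--Wirtinger, \(\omega_k-\bar\omega_k\to0\) in \(W^{1,p}\), where \(\bar\omega_k\) denotes the mean, and then \(\bar\omega_k\to|\Omega|^{-1/p^*}\).

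\textbf{Step 2: uniform \(C^{1}\) convergence.} Rescale the Euler--Lagrange equation from the proof of Theorem~\ref{thm:bounded-compactness},
\[
  -\Delta_p\omega_k+\lambda_k\omega_k^{p-1}=S_{\lambda_k}\omega_k^{p^*-1}
  \quad\text{in }\Omega,
\]
with Neumann data. This step is the main obstacle. The right-hand side is only small in \(L^{p^*/(p^*-1)}\) a priori, and critical growth prevents a direct Moser iteration, so bubbling must be excluded. The plan is to use smallness instead: since \(S_{\lambda_k}\to0\), the potential \(a_k:=S_{\lambda_k}\omega_k^{p^*-p}\) satisfies \(\|a_k\|_{L^{N/p}}=S_{\lambda_k}\to0\). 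The Brezis--Kato/Moser argument as in \cite[Theorem~2.2]{PucciServadei2008} then applies with \(k\)-independent constants and yields a uniform \(L^\infty\) bound. Lieberman's estimate \cite{Lieberman1988} subsequently gives uniform \(C^{1,\beta}(\overline\Omega)\) bounds. Consequently \(\omega_k\to c_0:=|\Omega|^{-1/p^*}\) in \(C^1(\overline\Omega)\), with \(\|\nabla\omega_k\|_\infty\to0\) and \(\omega_k\ge c_0/2\) for large \(k\).

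\textbf{Step 3: a Poincaré-type contradiction.} Set \(v_k:=\omega_k-\bar\omega_k\not\equiv0\) and test the equation with \(v_k\). The term \(\bar\omega_k\) integrates out against the constant part, giving
\[
  \int_\Omega|\nabla\omega_k|^p
  =\int_\Omega\bigl(S_{\lambda_k}\omega_k^{p^*-1}-\lambda_k\omega_k^{p-1}\bigr)(\omega_k-\bar\omega_k).
\]
Subtracting the same expression evaluated at \(\bar\omega_k\), which integrates to zero against \(v_k\), and applying the mean value theorem with \(\omega_k\approx c_0\), the right-hand side is at most
\[
  C\bigl(S_{\lambda_k}+\lambda_k\bigr)\int_\Omega v_k^2\le C'\lambda_k\int_\Omega v_k^2 .
\]
For \(1<p\le2\) and \(\|\nabla\omega_k\|_\infty\le1\), we have \(|\nabla v_k|^p\ge|\nabla v_k|^2\|\nabla v_k\|_\infty^{p-2}\ge|\nabla v_k|^2\). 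The Neumann Poincaré inequality then gives
\[
  \int_\Omega|\nabla v_k|^p\ge\mu_2\int_\Omega v_k^2 .
\]
Hence \(\mu_2\int_\Omega v_k^2\le C'\lambda_k\int_\Omega v_k^2\), so \(v_k\equiv0\) for large \(k\), which is a contradiction.

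For \(p<2\), the factor \(\|\nabla v_k\|_\infty^{p-2}\to\infty\) only helps. This is exactly where \(p\le2\) enters, in contrast with Lemma~\ref{lem:scalar-constants-not-minimizers}.

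Thus there exists \(\overline\lambda>0\) such that every minimizer of \(S_\lambda\) is constant for \(0<\lambda<\overline\lambda\). For sign-changing minimizers we pass to \(|\omega|\), which is still a minimizer; the constancy of \(|\omega|\) together with the \(C^1\) regularity and connectedness of \(\Omega\) then yields that \(\omega\) itself is constant.
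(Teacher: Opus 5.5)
Your argument is correct, but it takes a genuinely different route from the paper.

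\textbf{The paper's route.} The paper uses no regularity theory and no compactness. It rescales to $u_\lambda=S_\lambda^{1/(p^*-p)}w_\lambda$, which solves $-\Delta_pu+\lambda u^{p-1}=u^{p^*-1}$ and satisfies $\|u_\lambda\|_{p^*}^{p^*-p}=S_\lambda\le\lambda|\Omega|^{p/N}$. It tests with $\varphi_\lambda=u_\lambda-c_\lambda$, dropping the $\lambda$-term by monotonicity. It then bounds $\int_\Omega(u_\lambda^{p^*-1}-c_\lambda^{p^*-1})\varphi_\lambda$ by H\"older in $L^{p^*}$ and elementary inequalities, treating $p^*\ge2$ and $p^*<2$ separately. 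It closes with the Sobolev--Poincar\'e inequality $\|\varphi\|_{p^*}^p\le C\int_\Omega|\nabla\varphi|^p$. There, $p\le2$ enters through $\|\varphi_\lambda\|_{p^*}^{2}\le(2\|u_\lambda\|_{p^*})^{2-p}\|\varphi_\lambda\|_{p^*}^{p}$, which leads to $1\le C\|u_\lambda\|_{p^*}^{p^*-p}$. So that proof is direct (no sequences) and quantitative: it gives $\overline\lambda$ in terms of the Sobolev--Poincar\'e constant and $|\Omega|$.

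\textbf{Your route.} You first prove $\omega_k\to|\Omega|^{-1/p^*}$ in $C^1(\overline\Omega)$ and then linearize around the constant. The mean value factor $\xi^{p^*-2}$ is harmless for either sign of $p^*-2$, so no case split is needed. The condition $p\le2$ enters through $|\nabla v_k|^p\ge|\nabla v_k|^2$ when $\|\nabla v_k\|_\infty\le1$, and the $L^2$ Neumann spectral gap $\mu_2$ finishes the proof. This makes the role of $p\le2$ transparent: it is exactly the reverse of what Lemma~\ref{lem:scalar-constants-not-minimizers} exploits for $p>2$, where the gradient term is $o(t^2)$ near a constant. It also gives $C^1$ convergence of minimizers as a by-product. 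The cost is a uniform $L^\infty$ bound plus Lieberman's uniform $C^{1,\beta}(\overline\Omega)$ estimate for the conormal problem, and your $\overline\lambda$ is obtained only by contradiction.

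\textbf{Two points to tighten.}

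First, in Step~2 the absorption constant $C_2(s)$ of the Moser scheme grows with $s$. The smallness $\|a_k\|_{L^{N/p}}=S_{\lambda_k}\to0$ therefore lets you absorb only for finitely many fixed exponents, not along the whole iteration with one $k$-threshold.
\begin{itemize}
\item Use it once, for instance with $p(1+s)=p^*$. This gives a uniform bound on $\omega_k$ in $L^{p^{*2}/p}$.
\item Then $a_k$ is uniformly bounded in $L^r$ with $r=N^2/(p(N-p))>N/p$.
\item The standard iteration with such a potential then yields the $k$-independent $L^\infty$ bound.
\end{itemize}

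Second, positivity of an arbitrary nonconstant minimizer does not follow from Theorem~\ref{thm:bounded-compactness} and Lemma~\ref{lem:ferrero-strict-bound}, which only give existence of a positive minimizer. The correct reduction is the one in your last paragraph:
\begin{itemize}
\item $|\omega_k|$ is again a minimizer.
\item It is nonconstant whenever $\omega_k$ is, since a $W^{1,p}$ function on a connected domain with values in $\{\pm c\}$ is constant.
\item Harnack then makes it positive.
\end{itemize}
State this reduction at the start.
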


\begin{proof}
We adapt the argument of \cite[Lemma~2.3]{AdimurthiYadavaLinNi1993}.
Let \(\widetilde w_\lambda\) be an arbitrary scalar minimizer, normalized by
\[
  \int_\Omega|\widetilde w_\lambda|^{p^*}=1.
\]
Its absolute value is a nonnegative minimizer and, by the \(L^\infty\)
estimate in \cite[Theorem~2.2]{PucciServadei2008} and the Harnack inequality
\cite{Trudinger1967}, it is positive in \(\Omega\). We may therefore work
with the positive minimizer \(w_\lambda:=|\widetilde w_\lambda|\), which
satisfies
\[
  -\Delta_pw_\lambda+\lambda w_\lambda^{p-1}
  =S_\lambda w_\lambda^{p^*-1}.
\]
Set
\[
  u_\lambda:=S_\lambda^{1/(p^*-p)}w_\lambda.
\]
Then
\begin{equation}\label{eq:small-lambda-rescaled-equation}
  -\Delta_pu_\lambda+\lambda u_\lambda^{p-1}
  =u_\lambda^{p^*-1}.
\end{equation}
Since constant functions give
\(S_\lambda\leq\lambda|\Omega|^{p/N}\), we have
\begin{equation}\label{eq:small-rescaled-minimizer}
  \|u_\lambda\|_{p^*}
  =S_\lambda^{1/(p^*-p)}\longrightarrow0
  \qquad\text{as }\lambda\to0.
\end{equation}

Write
\[
  c_\lambda:=\frac1{|\Omega|}\int_\Omega u_\lambda,
  \qquad
  \varphi_\lambda:=u_\lambda-c_\lambda.
\]
Thus \(\int_\Omega\varphi_\lambda=0\), and the Poincar\'e
inequality gives
\begin{equation}\label{eq:poincare-sobolev-small-lambda}
  \|\varphi_\lambda\|_{p^*}^p
  \leq C\int_\Omega|\nabla\varphi_\lambda|^p.
\end{equation}
Testing \eqref{eq:small-lambda-rescaled-equation} with
\(\varphi_\lambda\), subtracting the constant terms whose integrals vanish,
and using the monotonicity of \(t\mapsto t^{p-1}\), we obtain
\begin{equation}\label{eq:small-lambda-tested-equation}
  \int_\Omega|\nabla\varphi_\lambda|^p
  \leq\int_\Omega
  \bigl(u_\lambda^{p^*-1}-c_\lambda^{p^*-1}\bigr)
  \varphi_\lambda.
\end{equation}

Suppose first that \(p^*\geq2\). The inequality
\[
  |a^{p^*-1}-b^{p^*-1}|
  \leq C(a^{p^*-2}+b^{p^*-2})|a-b|,
  \qquad a,b\geq0,
\]
H\"older's inequality, and
\[
  |\Omega|^{1/p^*}c_\lambda
  \leq\|u_\lambda\|_{p^*},
  \qquad
  \|\varphi_\lambda\|_{p^*}
  \leq2\|u_\lambda\|_{p^*},
\]
give
\[
  \int_\Omega|\nabla\varphi_\lambda|^p
  \leq C\|u_\lambda\|_{p^*}^{p^*-2}
  \|\varphi_\lambda\|_{p^*}^2
  \leq C\|u_\lambda\|_{p^*}^{p^*-p}
  \|\varphi_\lambda\|_{p^*}^p,
\]
where the last inequality uses \(p\leq2\). Together with
\eqref{eq:poincare-sobolev-small-lambda}, this implies, if
\(\varphi_\lambda\not\equiv0\),
\[
  1\leq C\|u_\lambda\|_{p^*}^{p^*-p},
\]
contrary to \eqref{eq:small-rescaled-minimizer} for small \(\lambda\).

If \(p^*<2\), the concavity inequality
\[
  |a^{p^*-1}-b^{p^*-1}|
  \leq|a-b|^{p^*-1},
  \qquad a,b\geq0,
\]
gives instead
\[
  \int_\Omega|\nabla\varphi_\lambda|^p
  \leq\|\varphi_\lambda\|_{p^*}^{p^*}
  \leq C\|u_\lambda\|_{p^*}^{p^*-p}
  \|\varphi_\lambda\|_{p^*}^p.
\]
The same contradiction follows. Hence
\(\varphi_\lambda\equiv0\) for every sufficiently small \(\lambda\), so
\(w_\lambda\), and therefore \(\widetilde w_\lambda\), is constant. Since
the initial minimizer was arbitrary, the conclusion follows.
\end{proof}

\begin{proposition}\label{prop:optimal-constant-threshold}
Assume that $1<p\leq2$, and define
\begin{equation}\label{eq:optimal-constant-threshold}
  \lambda_0
  :=\sup\left\{\lambda>0:
  \int_\Omega|\nabla u|^p+\lambda\int_\Omega|u|^p
  \geq\lambda|\Omega|^{p/N}
  \quad\text{for every }u\in\mathbb S_{p^*,1}\right\}.
\end{equation}
Then
\(
  0<\lambda_0
  \leq\frac{S}{2^{p/N}|\Omega|^{p/N}}.
\)
Moreover,
\[
  S_\lambda=\lambda|\Omega|^{p/N}
  \quad\text{for }0<\lambda\leq\lambda_0,
  \qquad
  S_\lambda<\lambda|\Omega|^{p/N}
  \quad\text{for }\lambda>\lambda_0.
\]
Every minimizer of \(S_\lambda\) is constant for
\(0<\lambda<\lambda_0\); constant functions remain minimizers at
\(\lambda=\lambda_0\); and no nonzero constant function is a minimizer for
\(\lambda>\lambda_0\).
\end{proposition}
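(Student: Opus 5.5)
Set $\Lambda$ to be the set in \eqref{eq:optimal-constant-threshold}. The basic observation is that, by homogeneity,
\[
  S_\lambda=\inf_{u\in\mathbb S_{p^*,1}}\Bigl(\int_\Omega|\nabla u|^p+\lambda\int_\Omega|u|^p\Bigr).
\]
The constant $|\Omega|^{-1/p^*}$ lies in $\mathbb S_{p^*,1}$ with value $\lambda|\Omega|^{p/N}$. Hence $\lambda\in\Lambda$ if and only if $S_\lambda=\lambda|\Omega|^{p/N}$, and in general $S_\lambda\le\lambda|\Omega|^{p/N}$.

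The first step is to show that $\Lambda$ is an interval $(0,\lambda_0]$. For fixed $u\in\mathbb S_{p^*,1}$, Hölder gives $\int|u|^p\le|\Omega|^{p/N}$, so
\[
  \lambda\mapsto \int|\nabla u|^p+\lambda\Bigl(\int|u|^p-|\Omega|^{p/N}\Bigr)
\]
is affine and nonincreasing. The condition defining $\Lambda$ is therefore inherited by smaller $\lambda$, so $\Lambda$ is an interval starting at $0$. Closedness at the right endpoint follows because the condition is an intersection over $u$ of closed conditions in $\lambda$. For nonemptiness, Proposition~\ref{prop:small-lambda-constants} (which assumes Theorem~\ref{thm:bounded-compactness} and Lemma~\ref{lem:ferrero-strict-bound}, so minimizers exist) shows that for $\lambda<\overline\lambda$ the minimizers are constant, so $S_\lambda=\lambda|\Omega|^{p/N}$ and $(0,\overline\lambda)\subset\Lambda$. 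Thus $\lambda_0\ge\overline\lambda>0$. For the upper bound, if $\lambda\in\Lambda$ then $\lambda|\Omega|^{p/N}=S_\lambda<S/2^{p/N}$ by Lemma~\ref{lem:ferrero-strict-bound}, which gives $\lambda_0\le S/(2^{p/N}|\Omega|^{p/N})$. In particular $\lambda_0<\infty$.

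This yields the dichotomy $S_\lambda=\lambda|\Omega|^{p/N}$ for $\lambda\le\lambda_0$ and $S_\lambda<\lambda|\Omega|^{p/N}$ for $\lambda>\lambda_0$. Since a constant attains exactly $\lambda|\Omega|^{p/N}$, constants are minimizers precisely when $\lambda\le\lambda_0$. This also shows that constants remain minimizers at $\lambda=\lambda_0$ and that no nonzero constant minimizes for $\lambda>\lambda_0$.

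The remaining step, which I expect to be the main obstacle, is to show that every minimizer is constant when $\lambda<\lambda_0$. Let $u\in\mathbb S_{p^*,1}$ attain $S_\lambda$ and fix $\mu\in(\lambda,\lambda_0]$. Then
\[
  \lambda|\Omega|^{p/N}=\int|\nabla u|^p+\lambda\int|u|^p
  =\int|\nabla u|^p+\mu\int|u|^p-(\mu-\lambda)\int|u|^p
  \ge \mu|\Omega|^{p/N}-(\mu-\lambda)\int|u|^p,
\]
where the inequality uses $\mu\in\Lambda$. This gives $\int|u|^p\ge|\Omega|^{p/N}$. Combined with Hölder's inequality, it forces equality in Hölder, so $|u|$ is constant. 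Equality then holds throughout, hence $\int|\nabla u|^p=0$ and $u$ is constant, since $\Omega$ is connected. The argument is thus the strict monotonicity in $\lambda$ coupled with the equality case of Hölder's inequality.
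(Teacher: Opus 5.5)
Your proof is correct and follows essentially the same route as the paper. Positivity of $\lambda_0$ comes from Proposition~\ref{prop:small-lambda-constants}, downward closedness of $\Lambda$ from $\int_\Omega|u|^p\le|\Omega|^{p/N}$, the upper bound from Lemma~\ref{lem:ferrero-strict-bound}, and constancy for $\lambda<\lambda_0$ from combining the minimizer identity at $\lambda$ with the defining inequality at a larger parameter. Your explicit closedness argument for $\lambda_0\in\Lambda$ makes rigorous a step the paper only sketches. The H\"older equality case at the end is superfluous, since $\int_\Omega|u|^p\geq|\Omega|^{p/N}$ already gives $\int_\Omega|\nabla u|^p=\lambda\bigl(|\Omega|^{p/N}-\int_\Omega|u|^p\bigr)\le0$.
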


\begin{proof}
Let
\[
  \Lambda:=\left\{\lambda>0:
  \int_\Omega|\nabla u|^p+\lambda\int_\Omega|u|^p
  \geq\lambda|\Omega|^{p/N}
  \quad\text{for every }u\in\mathbb S_{p^*,1}\right\}.
\]
By Proposition~\ref{prop:small-lambda-constants}, there exists
\(\overline\lambda>0\) such that all minimizers of \(S_\lambda\) are
constant for \(0<\lambda<\overline\lambda\). Hence
\[
  S_\lambda=\lambda|\Omega|^{p/N}
  \qquad\text{for }0<\lambda<\overline\lambda,
\]
and therefore \((0,\overline\lambda)\subset\Lambda\). In particular,
\(\lambda_0>0\).

If \(\mu\in\Lambda\) and \(0<\lambda<\mu\), then, for every
\(u\in\mathbb S_{p^*,1}\),
\[
\begin{aligned}
  &\int_\Omega|\nabla u|^p+\lambda\int_\Omega|u|^p
  -\lambda|\Omega|^{p/N}\\
  &\quad=
  \left(
  \int_\Omega|\nabla u|^p+\mu\int_\Omega|u|^p
  -\mu|\Omega|^{p/N}
  \right)
  +(\mu-\lambda)
  \left(|\Omega|^{p/N}-\int_\Omega|u|^p\right)\geq0.
\end{aligned}
\]
Thus \(\Lambda\) is an interval.

Set
\(
  \lambda_*:=\frac{S}{2^{p/N}|\Omega|^{p/N}}.
\)
Lemma~\ref{lem:ferrero-strict-bound} gives
\(
  S_{\lambda_*}<\frac{S}{2^{p/N}}
  =\lambda_*|\Omega|^{p/N},
\)
so \(\lambda_*\notin\Lambda\). Hence
\(\lambda_0\leq\lambda_*\).

Choose \(\lambda_n\in\Lambda\) with \(\lambda_n\uparrow\lambda_0\).
 Therefore
\[
  S_\lambda=\lambda|\Omega|^{p/N}
  \qquad\text{for }0<\lambda\leq\lambda_0.
\]
{
Let $0<\lambda<\lambda_0$, and let $u$ be a minimizer of
$S_\lambda$, normalized by $\int_\Omega |u|^{p^*}=1$.
Since $S_{\lambda_0}=\lambda_0|\Omega|^{p/N}$, we have
\[
\lambda_0|\Omega|^{p/N}
\le \int_\Omega |\nabla u|^p+\lambda_0\int_\Omega |u|^p
= \lambda_0|\Omega|^{p/N}
  -\frac{\lambda_0-\lambda}{\lambda}
   \int_\Omega |\nabla u|^p.
\]
Thus $\nabla u=0$ almost everywhere in $\Omega$.
Since $\Omega$ is connected, $u$ is constant.

}

Finally, if \(\lambda>\lambda_0\), then \(\lambda\notin\Lambda\), and hence
some \(u\in\mathbb S_{p^*,1}\) satisfies
\[
  \int_\Omega|\nabla u|^p+
  \lambda\int_\Omega|u|^p
  <\lambda|\Omega|^{p/N}.
\]
Consequently,
\(S_\lambda<\lambda|\Omega|^{p/N}\), so no nonzero constant function is a
minimizer. At \(\lambda=\lambda_0\), constant functions remain minimizers.
\end{proof}

\section{Proofs of Theorems~\ref{thm:whole-space} and
\ref{thm:bounded-vector}}
\label{sec:proof-main}

\subsection{The whole-space problem}

We begin by comparing the scalar and vectorial Sobolev constants. Let
$\boldsymbol e_1,\ldots,\boldsymbol e_m$ denote the canonical basis of
$\mathbb R^m$.

\begin{proposition}\label{prop:scalar-to-vector}
Let $\omega\in D^{1,p}(\mathbb R^N)$ be a scalar minimizer for $S$. Then, for
every $i=1,\ldots,m$, the map
\[
  \boldsymbol u_i:=\omega\boldsymbol e_i
\]
is a minimizer for $S_m$. In particular, $S_m=S$.
\end{proposition}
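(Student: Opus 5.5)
We prove the two inequalities $S_m\le S$ and $S_m\ge S$; together they give $S_m=S$ and show that each $\omega\boldsymbol e_i$ attains $S_m$.

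\emph{Upper bound.} For any $v\in D^{1,p}(\mathbb R^N)\setminus\{0\}$, the map $v\boldsymbol e_i$ lies in $D^{1,p}(\mathbb R^N;\mathbb R^m)$. It satisfies $|D(v\boldsymbol e_i)|=|\nabla v|$ and $|v\boldsymbol e_i|=|v|$ pointwise, so $\mathcal Q_\infty(v\boldsymbol e_i)$ equals the scalar quotient of $v$. Taking the infimum over $v$ gives $S_m\le S$. In particular $\mathcal Q_\infty(\omega\boldsymbol e_i)=S$.

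\emph{Lower bound via the modulus.} Given $\boldsymbol u\in D^{1,p}(\mathbb R^N;\mathbb R^m)\setminus\{\boldsymbol0\}$, set $w:=|\boldsymbol u|$. We show that $w\in D^{1,p}(\mathbb R^N)$ and that $|\nabla w|\le|D\boldsymbol u|$ almost everywhere.

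\begin{enumerate}[(i)]
\item Use the smooth approximations $w_\varepsilon:=(|\boldsymbol u|^2+\varepsilon^2)^{1/2}-\varepsilon$. The chain rule for the Lipschitz map $\boldsymbol y\mapsto(|\boldsymbol y|^2+\varepsilon^2)^{1/2}$ composed with Sobolev maps gives
\[
  \nabla w_\varepsilon=\frac{\sum_i u^i\nabla u^i}{(|\boldsymbol u|^2+\varepsilon^2)^{1/2}}.
\]
\item By the Cauchy--Schwarz inequality in $\mathbb R^m$, applied to the vector $(u^i)_i$ and the columns of $D\boldsymbol u$,
\[
  \Bigl|\sum_i u^i\nabla u^i\Bigr|\le|\boldsymbol u|\,|D\boldsymbol u|,
\]
hence $|\nabla w_\varepsilon|\le|D\boldsymbol u|$.
\item Since $0\le w_\varepsilon\le w$, $w\in L^{p^*}$, and $w_\varepsilon\to w$ pointwise, the family $(w_\varepsilon)$ is bounded in $D^{1,p}$. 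Passing to a weak limit as $\varepsilon\to0$ gives $w\in D^{1,p}(\mathbb R^N)$, with $\nabla w$ the weak limit of $\nabla w_\varepsilon$.
\item Weak lower semicontinuity in the weighted form gives $\int|\nabla w|^p\le\int|D\boldsymbol u|^p$. The pointwise bound $|\nabla w|\le|D\boldsymbol u|$ a.e.\ also follows directly, since $\nabla w=\sum_i(u^i/|\boldsymbol u|)\nabla u^i$ on $\{\boldsymbol u\ne0\}$ and $\nabla w=0$ a.e.\ on $\{\boldsymbol u=0\}$. Only the integral inequality is needed below.
\end{enumerate}

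Consequently
\[
  \mathcal Q_\infty(\boldsymbol u)\ge\frac{\int|\nabla w|^p}{\bigl(\int w^{p^*}\bigr)^{p/p^*}}\ge S,
\]
so $S_m\ge S$.

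Combining the two bounds, $S_m=S=\mathcal Q_\infty(\omega\boldsymbol e_i)$, and $\omega\boldsymbol e_i$ is a minimizer for every $i=1,\ldots,m$. The only step needing care is the membership $|\boldsymbol u|\in D^{1,p}$ together with the gradient inequality, i.e.\ the vectorial chain rule for the nonsmooth map $\boldsymbol y\mapsto|\boldsymbol y|$. The $\varepsilon$-regularization above handles this; the remaining steps are routine.
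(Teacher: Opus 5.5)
Your proof is correct and follows essentially the same route as the paper: the lower bound $S_m\ge S$ comes from passing to the modulus $|\boldsymbol u|$, using $|\nabla|\boldsymbol u||\le|D\boldsymbol u|$ and $\||\boldsymbol u|\|_{L^{p^*}}=\|\boldsymbol u\|_{L^{p^*}}$, and the upper bound comes from $\mathcal Q_\infty(\omega\boldsymbol e_i)=S$. The only difference is that you also justify $|\boldsymbol u|\in D^{1,p}(\mathbb R^N)$ and the gradient bound, via the regularization $w_\varepsilon=(|\boldsymbol u|^2+\varepsilon^2)^{1/2}-\varepsilon$, whereas the paper simply asserts these facts.
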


\begin{proof}
Let $\boldsymbol u\in D^{1,p}(\mathbb R^N;\mathbb R^m)\setminus
\{\boldsymbol0\}$ and set $\rho:=|\boldsymbol u|$. Then
$\rho\in D^{1,p}(\mathbb R^N)$ and
\[
  |\nabla\rho|\leq|D\boldsymbol u|
  \qquad\text{a.e. in }\mathbb R^N.
\]
Since $\|\rho\|_{L^{p^*}}=\|\boldsymbol u\|_{L^{p^*}}$, it follows that
\[
  \mathcal Q_\infty(\boldsymbol u)
  \geq
  \frac{\displaystyle\int_{\mathbb R^N}|\nabla\rho|^p}
  {\displaystyle
  \left(\int_{\mathbb R^N}|\rho|^{p^*}\right)^{p/p^*}}
  \geq S.
\]
Taking the infimum gives $S_m\geq S$. On the other hand,
\[
  |\omega\boldsymbol e_i|=|\omega|,
  \qquad
  |D(\omega\boldsymbol e_i)|=|\nabla\omega|,
\]
and therefore
\[
  \mathcal Q_\infty(\omega\boldsymbol e_i)
  =\frac{\displaystyle\int_{\mathbb R^N}|\nabla\omega|^p}
  {\displaystyle
  \left(\int_{\mathbb R^N}|\omega|^{p^*}\right)^{p/p^*}}
  =S.
\]
Hence $S_m\leq S$, which proves $S_m=S$ and the assertion.
\end{proof}

We now classify all vectorial minimizers, following the synchronization
argument of \cite{CaninoMauroVectorial}.

\begin{proof}[Proof of Theorem~\ref{thm:whole-space}]
Proposition~\ref{prop:scalar-to-vector} shows that $S_m=S$ and that $S_m$ is
attained. Let
$\boldsymbol u\in D^{1,p}(\mathbb R^N;\mathbb R^m)$ be an arbitrary
minimizer for $S_m$ and set
\[
  \omega:=|\boldsymbol u|
  =\left(\sum_{i=1}^m|u^i|^2\right)^{1/2}.
\]
As in the proof of Proposition~\ref{prop:scalar-to-vector},
\[
  S=S_m=\mathcal Q_\infty(\boldsymbol u)
  \geq\mathcal Q_\infty(\omega)\geq S.
\]
Consequently, $\omega$ is a scalar minimizer for $S$ and
\[
  |\nabla\omega|=|D\boldsymbol u|
  \qquad\text{a.e. in }\mathbb R^N.
\]
Up to multiplication by a positive constant, $\omega$ is one of the
explicit functions $U_{\delta,x_0}$ introduced in \eqref{eq:bubble}; in particular,
$\omega>0$ in $\mathbb R^N$.

We recall Lagrange's identity: for $a_1,\ldots,a_m\in\mathbb R$ and
$\boldsymbol b_1,\ldots,\boldsymbol b_m\in\mathbb R^N$,
\[
  \left(\sum_{i=1}^m a_i^2\right)
  \left(\sum_{i=1}^m|\boldsymbol b_i|^2\right)
  -\left|\sum_{i=1}^m a_i\boldsymbol b_i\right|^2
  =\sum_{1\leq i<j\leq m}
  |a_i\boldsymbol b_j-a_j\boldsymbol b_i|^2.
\]
Applying this identity with $a_i=u^i$ and
$\boldsymbol b_i=\nabla u^i$, and observing that
\[
  \nabla\omega
  =\frac1\omega\sum_{i=1}^m u^i\nabla u^i,
\]
we have
\[
  \omega^2|\nabla\omega|^2
  =\omega^2|D\boldsymbol u|^2
  -\sum_{1\leq i<j\leq m}
  |u^i\nabla u^j-u^j\nabla u^i|^2.
\]
The equality $|\nabla\omega|=|D\boldsymbol u|$ therefore yields
\[
  u^i\nabla u^j=u^j\nabla u^i
  \qquad\text{a.e. in }\mathbb R^N,
  \quad i,j=1,\ldots,m.
\]
For each $i$, it follows that
\[
  \omega^2\nabla u^i
  =u^i\sum_{j=1}^m u^j\nabla u^j
  =u^i\omega\nabla\omega,
\]
and hence
\[
  \nabla\left(\frac{u^i}{\omega}\right)=0
  \qquad\text{a.e. in }\mathbb R^N.
\]
Thus $u^i=c^i\omega$ for some constants $c^i\in\mathbb R$. Since
$|\boldsymbol u|=\omega$, we have
\[
  \sum_{i=1}^m|c^i|^2=1.
\]
Therefore, $\boldsymbol u=\boldsymbol c\,\omega$ with
$\boldsymbol c\in\mathbb S^{m-1}$. Since
$\omega=\kappa U_{\delta,x_0}$ for some $\kappa>0$, the factor $\kappa$ can
be absorbed into the coefficients $c^i$. Hence
\[
  \boldsymbol u
  =\bigl(c^1U_{\delta,x_0},\ldots,c^mU_{\delta,x_0}\bigr)
\]
for a nonzero vector $\boldsymbol c\in\mathbb R^m$. Conversely,
Proposition~\ref{prop:scalar-to-vector} and rotational invariance show that
every map of this form is a minimizer. This concludes the proof.
\end{proof}

\subsection{The bounded-domain problem}

We now compare the scalar and vectorial constants, exactly as in the
whole-space case.

\begin{proposition}\label{prop:bounded-scalar-to-vector}
Let $\omega\in W^{1,p}(\Omega)$ be a scalar minimizer for $S_\lambda$. Then,
for every $i=1,\ldots,m$, the map $\omega\boldsymbol e_i$ is a minimizer for
$S_{\lambda,m}$. In particular,
\[
  S_{\lambda,m}=S_\lambda.
\]
\end{proposition}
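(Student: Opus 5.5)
The argument mirrors the proof of Proposition~\ref{prop:scalar-to-vector}, with the whole-space quotient replaced by $\mathcal Q_\lambda$. It has two inequalities.

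For $S_{\lambda,m}\geq S_\lambda$, take an arbitrary $\boldsymbol u\in W^{1,p}(\Omega;\mathbb R^m)\setminus\{\boldsymbol0\}$ and set $\rho:=|\boldsymbol u|$. Then $\rho\in W^{1,p}(\Omega)$ and $|\nabla\rho|\leq|D\boldsymbol u|$ a.e. in $\Omega$. To justify this, approximate $\rho$ by $\rho_\varepsilon:=(|\boldsymbol u|^2+\varepsilon^2)^{1/2}-\varepsilon$. One has
\[
  \nabla\rho_\varepsilon=\frac{\sum_i u^i\nabla u^i}{(|\boldsymbol u|^2+\varepsilon^2)^{1/2}},
\]
and Cauchy--Schwarz gives $|\nabla\rho_\varepsilon|\leq|D\boldsymbol u|$. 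Since $\rho_\varepsilon\to\rho$ in $L^p$ with gradients uniformly bounded in $L^p$, we get $\rho\in W^{1,p}(\Omega)$ and the bound passes to the limit. Alternatively, one can simply quote the same fact already used in the whole-space case.

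Pointwise we also have $|\rho|^p=|\boldsymbol u|^p$ and $|\rho|^{p^*}=|\boldsymbol u|^{p^*}$. Hence both the lower-order term $\lambda\int_\Omega|\boldsymbol u|^p$ and the denominator are unchanged when $\boldsymbol u$ is replaced by $\rho$. Combined with the gradient bound, this yields
\[
  \mathcal Q_\lambda(\boldsymbol u)\geq\mathcal Q_\lambda(\rho)\geq S_\lambda.
\]
Note that $\rho\neq0$, and that $\lambda>0$ plays no role beyond keeping the energy nonnegative. Taking the infimum over $\boldsymbol u$ gives $S_{\lambda,m}\geq S_\lambda$.

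For the reverse inequality and the minimizing property, let $\omega$ be the given scalar minimizer and fix $i$. Then $|\omega\boldsymbol e_i|=|\omega|$ and $|D(\omega\boldsymbol e_i)|=|\nabla\omega|$ pointwise, so $\mathcal Q_\lambda(\omega\boldsymbol e_i)=\mathcal Q_\lambda(\omega)=S_\lambda$. This gives $S_{\lambda,m}\leq S_\lambda$. Combined with the first step, $S_{\lambda,m}=S_\lambda$, and therefore $\omega\boldsymbol e_i$ attains $S_{\lambda,m}$.

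The only point needing any care is the Sobolev regularity of $|\boldsymbol u|$ on a bounded domain together with the a.e. gradient inequality, which the approximation above settles. Everything else is bookkeeping.
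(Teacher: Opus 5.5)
Your proposal is correct and follows the paper's proof: the modulus comparison $\mathcal Q_\lambda(\boldsymbol u)\geq\mathcal Q_\lambda(|\boldsymbol u|)\geq S_\lambda$ gives the lower bound, and the competitor $\omega\boldsymbol e_i$ gives the reverse inequality. Your approximation by $(|\boldsymbol u|^2+\varepsilon^2)^{1/2}-\varepsilon$ supplies the justification of $|\nabla|\boldsymbol u||\leq|D\boldsymbol u|$ that the paper takes for granted; the pointwise bound survives the weak limit because $\{g:\ |g|\leq|D\boldsymbol u|\ \text{a.e.}\}$ is convex and closed in $L^p$, hence weakly closed.
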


\begin{proof}
For every
$\boldsymbol u\in W^{1,p}(\Omega;\mathbb R^m)\setminus\{\boldsymbol0\}$,
set $\rho:=|\boldsymbol u|$. Then
\[
  |\nabla\rho|\leq|D\boldsymbol u|,
  \qquad
  |\rho|=|\boldsymbol u|
  \quad\text{a.e. in }\Omega,
\]
and hence
\[
  \mathcal Q_\lambda(\boldsymbol u)
  \geq
  \frac{\displaystyle\int_\Omega
  \bigl(|\nabla\rho|^p+\lambda|\rho|^p\bigr)}
  {\displaystyle
  \left(\int_\Omega|\rho|^{p^*}\right)^{p/p^*}}
  \geq S_\lambda.
\]
Thus $S_{\lambda,m}\geq S_\lambda$. Conversely,
$\mathcal Q_\lambda(\omega\boldsymbol e_i)=S_\lambda$, which proves the
claim.
\end{proof}

\begin{proof}[Proof of Theorem~\ref{thm:bounded-vector}]
Lemma~\ref{lem:ferrero-strict-bound} and
Theorem~\ref{thm:bounded-compactness} provide a
positive scalar minimizer $\omega$ for $S_\lambda$. By
Proposition~\ref{prop:bounded-scalar-to-vector}, the vectorial constant is
attained and $S_{\lambda,m}=S_\lambda$.

Let $\boldsymbol u$ be any minimizer for $S_{\lambda,m}$ and set
$\rho:=|\boldsymbol u|$. The preceding comparison gives
\[
  S_\lambda=S_{\lambda,m}=\mathcal Q_\lambda(\boldsymbol u)
  \geq\mathcal Q_\lambda(\rho)\geq S_\lambda.
\]
Therefore, $\rho$ is a nonnegative  minimizer for $S_\lambda$ and
\[
  |\nabla\rho|=|D\boldsymbol u|
  \qquad\text{a.e. in }\Omega.
\]
The $L^\infty$ estimate in \cite[Theorem~2.2]{PucciServadei2008}, followed
by the Harnack inequality \cite{Trudinger1967}, gives
$\rho>0$ in $\Omega$. Repeating
the Lagrange-identity argument used in the proof of
Theorem~\ref{thm:whole-space}, we obtain
\[
  \nabla\left(\frac{u^i}{\rho}\right)=0,
  \qquad i=1,\ldots,m.
\]
Since $\Omega$ is connected, there exists
$\boldsymbol c=(c^1,\ldots,c^m)\in\mathbb S^{m-1}$ such that
\[
  \boldsymbol u=\boldsymbol c\,\rho.
\]
Conversely, every map $\boldsymbol c\,\omega$, with
$\boldsymbol c\in\mathbb S^{m-1}$ and $\omega$ a positive scalar minimizer,
attains $S_{\lambda,m}$.

After multiplication by a suitable positive constant, the scalar minimizer
$\rho$ satisfies
\[
  \begin{cases}
    -\Delta_p\rho+\lambda\rho^{p-1}=\rho^{p^*-1}
      &\text{in }\Omega,\\
    |\nabla\rho|^{p-2}\partial_\nu\rho=0
      &\text{on }\partial\Omega.
  \end{cases}
\]

It remains to prove the assertions concerning constant minimizers. If
\(p>2\), Lemma~\ref{lem:scalar-constants-not-minimizers} shows that \(\rho\)
cannot be constant. Since \(\boldsymbol u=\boldsymbol c\,\rho\), every
vectorial minimizer is therefore nonconstant.

Assume now that \(1<p\leq2\). Proposition~\ref{prop:small-lambda-constants}
and Proposition~\ref{prop:optimal-constant-threshold} show that the number
\(\lambda_0\) appearing in the statement is positive and has the following
properties. If \(0<\lambda<\lambda_0\), every scalar minimizer is constant;
hence the synchronization formula
\(\boldsymbol u=\boldsymbol c\,\rho\) shows that every vectorial minimizer is
constant. At \(\lambda=\lambda_0\), scalar constant functions are
minimizers, and Proposition~\ref{prop:bounded-scalar-to-vector} yields
constant vectorial minimizers. Finally, if \(\lambda>\lambda_0\), no
nonzero constant function minimizes \(S_\lambda\). If a vectorial minimizer
were constant, then its modulus \(\rho\) would be a nonzero constant scalar
minimizer by the equality chain above, a contradiction. Thus no nonzero
constant vectorial map is a minimizer in this range.

This concludes the proof.
\end{proof}

\section{Proof of Theorem~\ref{thm:resonant-existence}}
\label{sec:resonant}

We begin with the centering lemma specific to the kernel of the Neumann
$p$-Laplacian. 
\begin{lemma}\label{lem:centering}
For every $\boldsymbol u\in W^{1,p}(\Omega;\mathbb R^m)$, there exists a
unique vector $\boldsymbol d=\boldsymbol d(\boldsymbol u)\in\mathbb R^m$ such
that
\begin{equation}\label{eq:centering-map}
  \int_\Omega
  |\boldsymbol u+\boldsymbol d(\boldsymbol u)|^{p^*-2}
  \bigl(\boldsymbol u+\boldsymbol d(\boldsymbol u)\bigr)
  =\boldsymbol0.
\end{equation}
Moreover, the map
\(
  L^{p^*}(\Omega;\mathbb R^m)\ni\boldsymbol u
  \longmapsto\boldsymbol d(\boldsymbol u)\in\mathbb R^m
\)
is continuous and homogeneous, namely
\begin{equation}\label{eq:centering-map-homogeneity}
  \boldsymbol d(t\boldsymbol u)=t\boldsymbol d(\boldsymbol u)
  \qquad\text{for every }t\in\mathbb R.
\end{equation}
\end{lemma}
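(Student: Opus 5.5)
The plan is a variational characterization. Fix $\boldsymbol u\in L^{p^*}(\Omega;\mathbb R^m)$ and consider
\[
  F_{\boldsymbol u}(\boldsymbol d):=\frac1{p^*}\int_\Omega|\boldsymbol u+\boldsymbol d|^{p^*},
  \qquad \boldsymbol d\in\mathbb R^m .
\]
Since $p^*>1$, the map $\boldsymbol z\mapsto|\boldsymbol z|^{p^*}/p^*$ is $C^1$ and strictly convex on $\mathbb R^m$, with gradient $|\boldsymbol z|^{p^*-2}\boldsymbol z$. Using the bound $|\boldsymbol u+\boldsymbol d|^{p^*-1}\le C(|\boldsymbol u|^{p^*-1}+|\boldsymbol d|^{p^*-1})\in L^{1}(\Omega)$ (recall $\Omega$ is bounded) and dominated convergence, $F_{\boldsymbol u}$ is $C^1$ on $\mathbb R^m$ with
\[
  \nabla F_{\boldsymbol u}(\boldsymbol d)=\int_\Omega|\boldsymbol u+\boldsymbol d|^{p^*-2}(\boldsymbol u+\boldsymbol d).
\]
Thus \eqref{eq:centering-map} says exactly that $\boldsymbol d$ is a critical point of $F_{\boldsymbol u}$.

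\emph{Existence and uniqueness.} For coercivity, Minkowski's inequality gives
\[
  \|\boldsymbol u+\boldsymbol d\|_{p^*}\ge|\boldsymbol d|\,|\Omega|^{1/p^*}-\|\boldsymbol u\|_{p^*}\to\infty
  \qquad\text{as }|\boldsymbol d|\to\infty,
\]
so $F_{\boldsymbol u}$ has a global minimizer, which is a critical point. For strict convexity, if $\boldsymbol d_1\neq\boldsymbol d_2$ then $\boldsymbol u(x)+\boldsymbol d_1\neq\boldsymbol u(x)+\boldsymbol d_2$ for every $x$, on a set of positive measure. Strict convexity of $|\cdot|^{p^*}$ therefore gives strict inequality in the convexity relation for $F_{\boldsymbol u}$. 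A strictly convex $C^1$ function has at most one critical point, so $\boldsymbol d(\boldsymbol u)$ is unique.

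\emph{Homogeneity.} For $t\neq0$,
\[
  |t\boldsymbol u+t\boldsymbol d|^{p^*-2}(t\boldsymbol u+t\boldsymbol d)=|t|^{p^*-2}t\,|\boldsymbol u+\boldsymbol d|^{p^*-2}(\boldsymbol u+\boldsymbol d).
\]
Hence $t\boldsymbol d(\boldsymbol u)$ satisfies \eqref{eq:centering-map} for $t\boldsymbol u$, and uniqueness gives \eqref{eq:centering-map-homogeneity}. For $t=0$, uniqueness gives $\boldsymbol d(\boldsymbol 0)=\boldsymbol 0$, since $\boldsymbol d=\boldsymbol0$ clearly works.

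\emph{Continuity.} This is the only step requiring care, and I will argue by compactness plus uniqueness rather than an implicit function theorem; the latter fails when $p^*<2$ because the map is not $C^1$. Let $\boldsymbol u_n\to\boldsymbol u$ in $L^{p^*}$ and set $\boldsymbol d_n:=\boldsymbol d(\boldsymbol u_n)$.
\begin{enumerate}[(i)]
  \item \emph{Boundedness.} The minimizing property gives $F_{\boldsymbol u_n}(\boldsymbol d_n)\le F_{\boldsymbol u_n}(\boldsymbol 0)=\|\boldsymbol u_n\|_{p^*}^{p^*}/p^*$, which is bounded. Combined with the Minkowski estimate above, $|\boldsymbol d_n|\,|\Omega|^{1/p^*}\le 2\|\boldsymbol u_n\|_{p^*}$, so $(\boldsymbol d_n)$ is bounded.
  \item \emph{Passage to the limit.} Along any subsequence with $\boldsymbol d_n\to\boldsymbol d_*$, we have $\boldsymbol u_n+\boldsymbol d_n\to\boldsymbol u+\boldsymbol d_*$ in $L^{p^*}$. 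The Nemytskii map $\boldsymbol z\mapsto|\boldsymbol z|^{p^*-2}\boldsymbol z$ is continuous from $L^{p^*}$ to $L^{p^*/(p^*-1)}\subset L^1$ (via Vitali's theorem or the generalized dominated convergence theorem). Hence $\boldsymbol d_*$ satisfies \eqref{eq:centering-map} for $\boldsymbol u$.
  \item \emph{Conclusion.} By uniqueness, $\boldsymbol d_*=\boldsymbol d(\boldsymbol u)$. Since every subsequence has a further subsequence with this same limit, the whole sequence converges: $\boldsymbol d_n\to\boldsymbol d(\boldsymbol u)$.
\end{enumerate}
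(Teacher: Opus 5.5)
Your proof is correct and follows the paper's argument essentially step by step. The shared steps are minimization of the strictly convex, coercive function $\boldsymbol d\mapsto\int_\Omega|\boldsymbol u+\boldsymbol d|^{p^*}$, homogeneity from uniqueness, and continuity via the bound $|\Omega|^{1/p^*}|\boldsymbol d_n|\le 2\|\boldsymbol u_n\|_{L^{p^*}}$, continuity of $\boldsymbol z\mapsto|\boldsymbol z|^{p^*-2}\boldsymbol z$ from $L^{p^*}$ to $L^{(p^*)'}$, and the subsequence--uniqueness argument. You also make explicit two points the paper leaves implicit: that $F_{\boldsymbol u}$ is $C^1$, and that by convexity every solution of the centering identity is the minimizer, so uniqueness holds for the equation and not merely for the minimizer.
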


\begin{proof}
For a fixed $\boldsymbol u\in W^{1,p}(\Omega;\mathbb R^m)$, define
\[
  F(\boldsymbol d):=\int_\Omega
  |\boldsymbol u+\boldsymbol d|^{p^*},
  \qquad \boldsymbol d\in\mathbb R^m.
\]
Since $p^*>1$, the map
$\boldsymbol z\mapsto|\boldsymbol z|^{p^*}$ is strictly
convex. Hence $F$ is strictly convex. Moreover,
$F(\boldsymbol d)\to+\infty$ as $|\boldsymbol d|\to\infty$, so $F$ admits a
unique minimizer $\boldsymbol d(\boldsymbol u)$. For every
$\boldsymbol a\in\mathbb R^m$, the Euler equation at this minimizer is
\[
  0=\langle F'(\boldsymbol d(\boldsymbol u)),\boldsymbol a\rangle
  =p^*\,
  \int_\Omega
  |\boldsymbol u+\boldsymbol d(\boldsymbol u)|^{p^*-2}
  (\boldsymbol u+\boldsymbol d(\boldsymbol u))\cdot\boldsymbol a.
\]
Since $\boldsymbol a$ is arbitrary, \eqref{eq:centering-map} follows.

To prove \eqref{eq:centering-map-homogeneity}, let \(t\neq0\). By
\eqref{eq:centering-map},
\[
  \int_\Omega
  |t(\boldsymbol u+\boldsymbol d(\boldsymbol u))|^{p^*-2}
  t(\boldsymbol u+\boldsymbol d(\boldsymbol u))=\boldsymbol0.
\]
Thus, uniqueness gives
\(\boldsymbol d(t\boldsymbol u)=t\boldsymbol d(\boldsymbol u)\). For
\(t=0\), the conclusion follows from
\(\boldsymbol d(\boldsymbol0)=\boldsymbol0\).

It remains to prove continuity. Let
\(\boldsymbol u_n\to\boldsymbol u\) in
\(L^{p^*}(\Omega;\mathbb R^m)\), and set
\(\boldsymbol d_n:=\boldsymbol d(\boldsymbol u_n)\). By the minimizing
property,
\[
  \|\boldsymbol u_n+\boldsymbol d_n\|_{L^{p^*}}
  \leq\|\boldsymbol u_n\|_{L^{p^*}}.
\]
Hence the triangle inequality gives
\[
  |\Omega|^{1/p^*}|\boldsymbol d_n|
  \leq
  \|\boldsymbol u_n+\boldsymbol d_n\|_{L^{p^*}}
  +\|\boldsymbol u_n\|_{L^{p^*}}
  \leq2\|\boldsymbol u_n\|_{L^{p^*}},
\]
so \((\boldsymbol d_n)\) is bounded in \(\mathbb R^m\). Let
\(\boldsymbol d_{n_k}\to\overline{\boldsymbol d}\) be a convergent
subsequence. Then
\[
  \boldsymbol u_{n_k}+\boldsymbol d_{n_k}
  \longrightarrow\boldsymbol u+\overline{\boldsymbol d}
  \quad\text{in }L^{p^*}(\Omega;\mathbb R^m).
\]
By continuity of the duality map from \(L^{p^*}\) to
\(L^{(p^*)'}\), we may pass to the limit in
\eqref{eq:centering-map}, written for \(\boldsymbol u_n\), and obtain
\[
  \int_\Omega
  |\boldsymbol u+\overline{\boldsymbol d}|^{p^*-2}
  (\boldsymbol u+\overline{\boldsymbol d})=\boldsymbol0.
\]
Uniqueness yields
\(\overline{\boldsymbol d}=\boldsymbol d(\boldsymbol u)\).
Since every convergent subsequence of \((\boldsymbol d_n)\) has the same
limit, the whole sequence converges to \(\boldsymbol d(\boldsymbol u)\).
\end{proof}

We now use Lemma~\ref{lem:centering} to eliminate the centering constraint.
Let
\[
  W_\perp^{1,p}(\Omega;\mathbb R^m)
  :=\left\{
  \boldsymbol u\in W^{1,p}(\Omega;\mathbb R^m):
  \int_\Omega\boldsymbol u=\boldsymbol0
  \right\},
\]
endowed with the norm
\[
  \|\boldsymbol u\|_\perp
  :=\left(\int_\Omega|D\boldsymbol u|^p\right)^{1/p},
\]
which is equivalent to the \(W^{1,p}\)-norm on this space by the
Poincar\'e inequality.

\begin{lemma}\label{lem:centering-homeomorphism}
The map
\[
  \mathcal H\colon
  W_\perp^{1,p}(\Omega;\mathbb R^m)\setminus\{\boldsymbol0\}
  \longrightarrow\mathcal A_m,
  \qquad
  \boldsymbol v=\mathcal H(\boldsymbol u)
  :=\boldsymbol u+\boldsymbol d(\boldsymbol u)
\]
is a homeomorphism. Its inverse is
\begin{equation}\label{eq:centering-homeomorphism-inverse}
  \mathcal H^{-1}(\boldsymbol v)
  =\boldsymbol v-\frac1{|\Omega|}\int_\Omega\boldsymbol v.
\end{equation}
\end{lemma}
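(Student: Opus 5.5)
We will verify directly that $\mathcal H$ maps into $\mathcal A_m$, that the map $\mathcal G(\boldsymbol v):=\boldsymbol v-|\Omega|^{-1}\int_\Omega\boldsymbol v$ maps $\mathcal A_m$ into $W_\perp^{1,p}(\Omega;\mathbb R^m)\setminus\{\boldsymbol0\}$, that $\mathcal G\circ\mathcal H$ and $\mathcal H\circ\mathcal G$ are identities, and that both maps are continuous.

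\emph{$\mathcal H$ is well defined.} By Lemma~\ref{lem:centering}, $\boldsymbol v=\boldsymbol u+\boldsymbol d(\boldsymbol u)$ satisfies the centering identity \eqref{eq:centering-map}. Since constants belong to $W^{1,p}(\Omega;\mathbb R^m)$ ($\Omega$ is bounded), we have $\boldsymbol v\in W^{1,p}$. It remains to check $\boldsymbol v\neq\boldsymbol0$. If $\boldsymbol v=\boldsymbol0$, then $\boldsymbol u=-\boldsymbol d(\boldsymbol u)$ is constant. Its mean is $\boldsymbol0$ because $\boldsymbol u\in W_\perp^{1,p}$, so $\boldsymbol u=\boldsymbol0$, which is excluded.

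\emph{$\mathcal G$ is well defined.} For $\boldsymbol v\in\mathcal A_m$, clearly $\int_\Omega\mathcal G(\boldsymbol v)=\boldsymbol0$. If $\mathcal G(\boldsymbol v)=\boldsymbol0$, then $\boldsymbol v\equiv\boldsymbol a$ is a constant vector. The constraint then gives $|\Omega||\boldsymbol a|^{p^*-2}\boldsymbol a=\boldsymbol0$, so $\boldsymbol a=\boldsymbol0$, contradicting $\boldsymbol v\neq\boldsymbol0$.

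\emph{The two maps are mutually inverse.}
\begin{itemize}[leftmargin=*]
\item $\mathcal G(\mathcal H(\boldsymbol u))=\boldsymbol u+\boldsymbol d(\boldsymbol u)-\boldsymbol d(\boldsymbol u)-|\Omega|^{-1}\int_\Omega\boldsymbol u=\boldsymbol u$, using $\int_\Omega\boldsymbol u=\boldsymbol0$.
\item Conversely, let $\boldsymbol v\in\mathcal A_m$ and $\boldsymbol a:=|\Omega|^{-1}\int_\Omega\boldsymbol v$, so that $\mathcal G(\boldsymbol v)=\boldsymbol v-\boldsymbol a$. The vector $\boldsymbol d=\boldsymbol a$ satisfies \eqref{eq:centering-map} for $\mathcal G(\boldsymbol v)$, precisely because $\boldsymbol v\in\mathcal A_m$. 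By the uniqueness in Lemma~\ref{lem:centering}, $\boldsymbol d(\mathcal G(\boldsymbol v))=\boldsymbol a$, hence $\mathcal H(\mathcal G(\boldsymbol v))=\boldsymbol v$.
\end{itemize}
The only subtle point is that uniqueness in Lemma~\ref{lem:centering} is uniqueness of the solution of \eqref{eq:centering-map}, not merely of the minimizer of $F$. This holds because $F$ is strictly convex and $C^1$, so every critical point is the unique minimizer.

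\emph{Continuity.} We endow $\mathcal A_m$ with the $W^{1,p}$ topology and $W_\perp^{1,p}$ with $\|\cdot\|_\perp$, which is equivalent to the $W^{1,p}$ norm there.
\begin{itemize}[leftmargin=*]
\item For $\mathcal H$: if $\boldsymbol u_n\to\boldsymbol u$ in $W^{1,p}$, the Sobolev embedding gives convergence in $L^{p^*}$. The continuity of $\boldsymbol d$ on $L^{p^*}$ (Lemma~\ref{lem:centering}) then yields $\boldsymbol d(\boldsymbol u_n)\to\boldsymbol d(\boldsymbol u)$ in $\mathbb R^m$. Constant vectors converge in $W^{1,p}(\Omega)$ since $|\Omega|<\infty$, so $\mathcal H(\boldsymbol u_n)\to\mathcal H(\boldsymbol u)$.
\item For $\mathcal G$: it is the restriction of a bounded linear map on $W^{1,p}$, because $\bigl|\int_\Omega\boldsymbol v\bigr|\le|\Omega|^{1-1/p}\|\boldsymbol v\|_{L^p}$.
\end{itemize}
Hence $\mathcal H$ is a homeomorphism with inverse given by \eqref{eq:centering-homeomorphism-inverse}. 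No step is a real obstacle; the points needing care are the nonvanishing checks and invoking uniqueness for the identity $\mathcal H\circ\mathcal G=\mathrm{id}$.
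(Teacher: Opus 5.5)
Your proposal is correct and follows essentially the same route as the paper. Both arguments use the centering identity to land in $\mathcal A_m$, invoke uniqueness in Lemma~\ref{lem:centering} to get $\boldsymbol d\bigl(\boldsymbol v-|\Omega|^{-1}\int_\Omega\boldsymbol v\bigr)=|\Omega|^{-1}\int_\Omega\boldsymbol v$, and obtain continuity from that of $\boldsymbol d$ and of the linear mean-subtraction map. You are more explicit than the paper on several points it leaves implicit: the nonvanishing of $\mathcal H(\boldsymbol u)$ and of $\mathcal H^{-1}(\boldsymbol v)$, injectivity via $\mathcal G\circ\mathcal H=\mathrm{id}$, and why uniqueness holds for solutions of \eqref{eq:centering-map} rather than only for minimizers of $F$.
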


\begin{proof}
By \eqref{eq:centering-map}, the map \(\mathcal H\) takes values in
\(\mathcal A_m\). Let \(\boldsymbol v\in\mathcal A_m\) and set
\(
  \boldsymbol u
  =\boldsymbol v-\frac1{|\Omega|}\int_\Omega\boldsymbol v
\). Then
\(\boldsymbol u\in W_\perp^{1,p}(\Omega;\mathbb R^m)\). Moreover,
\[
  \boldsymbol u+\frac1{|\Omega|}\int_\Omega\boldsymbol v
  =\boldsymbol v,
\]
and  Lemma~\ref{lem:centering} gives
\[
  \boldsymbol d(\boldsymbol u)
  =\frac1{|\Omega|}\int_\Omega\boldsymbol v,
  \qquad
  \mathcal H(\boldsymbol u)=\boldsymbol v.
\]
This proves that \(\mathcal H\) is bijective and that its
inverse is given by
\eqref{eq:centering-homeomorphism-inverse}.
The continuity of \(\mathcal H\) follows from
Lemma~\ref{lem:centering}, while the continuity of its inverse is immediate.
\end{proof}

Consequently,
\begin{equation}\label{eq:resonant-reduced-quotient}
  S_{0,m}
  =\inf_{\boldsymbol u\in
  W_\perp^{1,p}(\Omega;\mathbb R^m)\setminus\{\boldsymbol0\}}
  \frac{\displaystyle\int_\Omega|D\boldsymbol u|^p}
  {\displaystyle
  \left(
  \int_\Omega
  |\boldsymbol u+\boldsymbol d(\boldsymbol u)|^{p^*}
  \right)^{p/p^*}}.
\end{equation}
By \eqref{eq:centering-map-homogeneity}, the quotient in
\eqref{eq:resonant-reduced-quotient} is homogeneous.
Therefore, with the notation introduced above, we have
\[
  \mathcal H^{-1}(\mathbb S_{p^*,m}\cap\mathcal A_m)
  =\left\{
  \boldsymbol u\in W_\perp^{1,p}(\Omega;\mathbb R^m):
  \int_\Omega
  |\boldsymbol u+\boldsymbol d(\boldsymbol u)|^{p^*}=1
  \right\}.
\]
Thus the equivalent minimization problems are
\begin{equation}\label{eq:resonant-sphere-reduction}
  S_{0,m}
  =\inf_{\boldsymbol v\in\mathbb S_{p^*,m}\cap\mathcal A_m}
  \int_\Omega|D\boldsymbol v|^p
  =\inf_{\boldsymbol u\in
  \mathcal H^{-1}(\mathbb S_{p^*,m}\cap\mathcal A_m)}
  \int_\Omega|D\boldsymbol u|^p.
\end{equation}

\begin{lemma}\label{lem:centered-functional-c1}
The functional
\[
  G\colon W_\perp^{1,p}(\Omega;\mathbb R^m)\longrightarrow\mathbb R,
  \qquad
  G(\boldsymbol u)
  :=\frac1{p^*}\int_\Omega
  |\boldsymbol u+\boldsymbol d(\boldsymbol u)|^{p^*},
\]
is of class \(C^1\). Moreover,
\begin{equation}\label{eq:derivative-centered-functional}
  G'(\boldsymbol u)[\boldsymbol\varphi]
  =\int_\Omega
  |\boldsymbol u+\boldsymbol d(\boldsymbol u)|^{p^*-2}
  (\boldsymbol u+\boldsymbol d(\boldsymbol u))\cdot
  \boldsymbol\varphi
\end{equation}
for every \(\boldsymbol u,\boldsymbol\varphi\in W_\perp^{1,p}\).
\end{lemma}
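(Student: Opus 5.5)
The key observation is that $G$ is a minimum value: by the proof of Lemma~\ref{lem:centering},
\[
  G(\boldsymbol u)=\min_{\boldsymbol d\in\mathbb R^m}\frac1{p^*}\int_\Omega|\boldsymbol u+\boldsymbol d|^{p^*}.
\]
The derivative of the minimizer $\boldsymbol d(\boldsymbol u)$ should therefore drop out, envelope-style, even though $\boldsymbol d$ need not be differentiable when $p^*<2$. The plan is to set
\[
  J(\boldsymbol w):=\frac1{p^*}\int_\Omega|\boldsymbol w|^{p^*},
  \qquad
  J'(\boldsymbol w)[\boldsymbol\varphi]=\int_\Omega|\boldsymbol w|^{p^*-2}\boldsymbol w\cdot\boldsymbol\varphi .
\]
This is a $C^1$ functional on $L^{p^*}(\Omega;\mathbb R^m)$, with derivative given by the continuous duality map into $L^{(p^*)'}$, and hence it is $C^1$ on $W^{1,p}_\perp$ via the continuous Sobolev embedding. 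We then have $G(\boldsymbol u)=J(\boldsymbol u+\boldsymbol d(\boldsymbol u))$.

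First I prove a two-sided estimate for the increment. Fix $\boldsymbol u,\boldsymbol\varphi$ and abbreviate $\boldsymbol d=\boldsymbol d(\boldsymbol u)$, $\boldsymbol d_\varphi=\boldsymbol d(\boldsymbol u+\boldsymbol\varphi)$. Minimality of $\boldsymbol d_\varphi$ gives the upper bound
\[
  G(\boldsymbol u+\boldsymbol\varphi)-G(\boldsymbol u)\le J(\boldsymbol u+\boldsymbol d+\boldsymbol\varphi)-J(\boldsymbol u+\boldsymbol d),
\]
and minimality of $\boldsymbol d$ at $\boldsymbol u$ gives the lower bound
\[
  G(\boldsymbol u+\boldsymbol\varphi)-G(\boldsymbol u)\ge J(\boldsymbol u+\boldsymbol d_\varphi+\boldsymbol\varphi)-J(\boldsymbol u+\boldsymbol d_\varphi).
\]
Applying the mean value theorem to $J$ along the segment in each bound, both sides equal $J'(\boldsymbol w_\theta)[\boldsymbol\varphi]$ for some point $\boldsymbol w_\theta$ within $\|\boldsymbol\varphi\|_{L^{p^*}}$ of $\boldsymbol u+\boldsymbol d$ or of $\boldsymbol u+\boldsymbol d_\varphi$, respectively.

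Next I control the error. Write $L(\boldsymbol\varphi)$ for the right-hand side of \eqref{eq:derivative-centered-functional}, i.e.\ $J'(\boldsymbol u+\boldsymbol d)[\boldsymbol\varphi]$. Both bounds then differ from $L(\boldsymbol\varphi)$ by at most
\[
  \|J'(\boldsymbol w_\theta)-J'(\boldsymbol u+\boldsymbol d)\|_{(L^{p^*})'}\,\|\boldsymbol\varphi\|_{L^{p^*}}.
\]
The continuity of $\boldsymbol d$ on $L^{p^*}$ from Lemma~\ref{lem:centering} gives $\boldsymbol d_\varphi\to\boldsymbol d$ as $\boldsymbol\varphi\to\boldsymbol0$, so $\boldsymbol w_\theta\to\boldsymbol u+\boldsymbol d$ in $L^{p^*}$. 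Continuity of $J'$ then makes the operator-norm factor $o(1)$. Since $\|\boldsymbol\varphi\|_{L^{p^*}}\le C\|\boldsymbol\varphi\|_\perp$, I conclude
\[
  G(\boldsymbol u+\boldsymbol\varphi)-G(\boldsymbol u)-L(\boldsymbol\varphi)=o(\|\boldsymbol\varphi\|_\perp).
\]
This is Fréchet differentiability with the stated formula.

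Finally, $\boldsymbol u\mapsto G'(\boldsymbol u)$ is the composition
\[
  \boldsymbol u\mapsto\boldsymbol u+\boldsymbol d(\boldsymbol u)\mapsto J'(\boldsymbol u+\boldsymbol d(\boldsymbol u)).
\]
The first map is continuous into $L^{p^*}$ by Lemma~\ref{lem:centering} and the Sobolev embedding, and the second is continuous into $L^{(p^*)'}\hookrightarrow(W^{1,p}_\perp)^*$. Hence $G\in C^1$. The main obstacle is the lower bound in the first step, which needs the continuity of $\boldsymbol d$ rather than its differentiability. The sandwich argument handles it without any implicit function theorem, and this matters precisely in the case $p^*<2$, where the implicit function theorem is unavailable.
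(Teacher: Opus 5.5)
Your proposal is correct and follows essentially the same route as the paper. Both arguments use the envelope-type sandwich of the increment $G(\boldsymbol u+\boldsymbol\varphi)-G(\boldsymbol u)$ between two increments of $\mathcal P(\boldsymbol z)=\frac1{p^*}\int_\Omega|\boldsymbol z|^{p^*}$, relying only on the minimality and the $L^{p^*}$-continuity of $\boldsymbol d$, the mean value theorem, and the continuity of the duality map. The only difference is cosmetic: you work with arbitrary increments $\boldsymbol\varphi$ and get Fr\'echet differentiability directly, while the paper computes directional derivatives along $t\boldsymbol\varphi$ and concludes $C^1$ from the continuity of the G\^ateaux derivative.
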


\begin{proof}
By Lemma~\ref{lem:centering}, for every
\(\boldsymbol u\in W_\perp^{1,p}\),
\[
  G(\boldsymbol u)
  =\min_{\boldsymbol d\in\mathbb R^m}
  \frac1{p^*}\int_\Omega|\boldsymbol u+\boldsymbol d|^{p^*},
\]
and the minimum is attained only at
\(\boldsymbol d=\boldsymbol d(\boldsymbol u)\).
Consider the \(C^1\) functional
\[
  \mathcal P\colon L^{p^*}(\Omega;\mathbb R^m)\longrightarrow\mathbb R,
  \qquad
  \mathcal P(\boldsymbol z)
  :=\frac1{p^*}\int_\Omega|\boldsymbol z|^{p^*}.
\]
Fix \(\boldsymbol u,\boldsymbol\varphi\in W_\perp^{1,p}\), and set
\(
  \boldsymbol v:=\boldsymbol u+\boldsymbol d(\boldsymbol u),
  \ 
  \boldsymbol v_t:=\boldsymbol u+
  \boldsymbol d(\boldsymbol u+t\boldsymbol\varphi).
\)
The minimizing property above yields
\begin{align*}
  G(\boldsymbol u+t\boldsymbol\varphi)-G(\boldsymbol u)
  \leq\mathcal P(\boldsymbol v+t\boldsymbol\varphi)-\mathcal P(\boldsymbol v),\quad
  G(\boldsymbol u+t\boldsymbol\varphi)-G(\boldsymbol u)
  \geq\mathcal P(\boldsymbol v_t+t\boldsymbol\varphi)-\mathcal P(\boldsymbol v_t).
\end{align*}
By the mean value theorem, there exist
\(\tau_1,\tau_2\in(0,1)\), depending on \(t\), such that
\begin{align*}
  \mathcal P(\boldsymbol v+t\boldsymbol\varphi)-\mathcal P(\boldsymbol v)
  =t\mathcal P'(\boldsymbol v+\tau_1t\boldsymbol\varphi)[\boldsymbol\varphi],\quad
  \mathcal P(\boldsymbol v_t+t\boldsymbol\varphi)-\mathcal P(\boldsymbol v_t)
  =t\mathcal P'(\boldsymbol v_t+\tau_2t\boldsymbol\varphi)[\boldsymbol\varphi].
\end{align*}
Lemma~\ref{lem:centering} gives
\(\boldsymbol v_t\to\boldsymbol v\) in \(L^{p^*}\) as \(t\to0\).
Consequently, the preceding two inequalities and the continuity of
\(\mathcal P'\) imply
\[
  \lim_{t\to0}
  \frac{G(\boldsymbol u+t\boldsymbol\varphi)-G(\boldsymbol u)}{t}
  =\mathcal P'(\boldsymbol v)[\boldsymbol\varphi]
  =\int_\Omega
  |\boldsymbol u+\boldsymbol d(\boldsymbol u)|^{p^*-2}
  (\boldsymbol u+\boldsymbol d(\boldsymbol u))\cdot
  \boldsymbol\varphi.
\]
The right-hand side is linear and continuous in \(\boldsymbol\varphi\).
Moreover, it depends continuously on \(\boldsymbol u\), because
Lemma~\ref{lem:centering} and the continuity of the duality map give
\[
  \bigl\|G'(\boldsymbol u_n)-G'(\boldsymbol u)\bigr\|_{(W_\perp^{1,p})'}
  \longrightarrow0
  \qquad\text{whenever }\boldsymbol u_n\to\boldsymbol u
  \text{ in }W_\perp^{1,p}.
\]
Therefore \(G\in C^1\), and \eqref{eq:derivative-centered-functional}
holds.
\end{proof}

\begin{lemma}\label{lem:resonant-energy-splitting}
There exists a bounded minimizing sequence
\((\boldsymbol u_n)\subset
\mathcal H^{-1}(\mathbb S_{p^*,m}\cap\mathcal A_m)\) for the last
problem in \eqref{eq:resonant-sphere-reduction} such that, setting
\(
  \boldsymbol v_n:=\mathcal H(\boldsymbol u_n)
  =\boldsymbol u_n+\boldsymbol d(\boldsymbol u_n),
\)
we have
\[
  \boldsymbol v_n\in\mathbb S_{p^*,m}\cap\mathcal A_m,
  \qquad
  D\boldsymbol u_n\to D\boldsymbol u
  \quad\text{a.e. in }\Omega
\]
for some
\(\boldsymbol u\in W_\perp^{1,p}(\Omega;\mathbb R^m)\).
If \(\boldsymbol w_n:=\boldsymbol u_n-\boldsymbol u\), then
\begin{equation}\label{eq:resonant-brezis-lieb-energy}
  \int_\Omega|D\boldsymbol u_n|^p
  =\int_\Omega|D\boldsymbol u|^p
  +\int_\Omega|D\boldsymbol w_n|^p+o(1).
\end{equation}
\end{lemma}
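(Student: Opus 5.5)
We plan to obtain the sequence from Ekeland's variational principle on the constraint set \(\mathcal M=\mathcal H^{-1}(\mathbb S_{p^*,m}\cap\mathcal A_m)\). By Lemma~\ref{lem:centered-functional-c1}, \(\mathcal M=\{G=1/p^*\}\). Moreover, \(G'(\boldsymbol u)[\boldsymbol u]=\int_\Omega|\boldsymbol v|^{p^*-2}\boldsymbol v\cdot\boldsymbol u\), where \(\boldsymbol v=\boldsymbol u+\boldsymbol d(\boldsymbol u)\). Since \(\boldsymbol v\in\mathcal A_m\), we may add the constant \(\boldsymbol d(\boldsymbol u)\) to the test function, and this gives
\[
  G'(\boldsymbol u)[\boldsymbol u]=\int_\Omega|\boldsymbol v|^{p^*}=1\neq0.
\]
Hence \(\mathcal M\) is a closed \(C^1\) submanifold of codimension one. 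The functional \(J(\boldsymbol u)=\int_\Omega|D\boldsymbol u|^p\) is \(C^1\) and bounded below on \(\mathcal M\). Ekeland's principle therefore yields a minimizing sequence \((\boldsymbol u_n)\subset\mathcal M\) with \(J(\boldsymbol u_n)\to S_{0,m}\) and
\[
  J'(\boldsymbol u_n)-\mu_nG'(\boldsymbol u_n)\to0
  \qquad\text{in }(W_\perp^{1,p})'.
\]
Testing with \(\boldsymbol u_n\) gives \(\mu_n=p\,S_{0,m}+o(1)\). Boundedness of \((\boldsymbol u_n)\) follows because \(\|\boldsymbol u_n\|_\perp^p=J(\boldsymbol u_n)\) is bounded. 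Then \(\boldsymbol v_n=\mathcal H(\boldsymbol u_n)\in\mathbb S_{p^*,m}\cap\mathcal A_m\) holds by construction. Passing to a subsequence, we get \(\boldsymbol u_n\rightharpoonup\boldsymbol u\) weakly in \(W_\perp^{1,p}\), strongly in \(L^p\) and \(L^q\) for \(q<p^*\), and a.e.

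The main step is the a.e. convergence of the gradients \(D\boldsymbol u_n\to D\boldsymbol u\), and here we follow the Boccardo--Murat approach. Let \(T_k\) be the vectorial truncation \(T_k(\boldsymbol z)=\boldsymbol z\min\{1,k/|\boldsymbol z|\}\), and fix a cutoff \(\psi\in C^\infty(\overline\Omega)\), \(0\le\psi\le1\). We test the Palais--Smale relation with
\[
  \boldsymbol\varphi_n=\psi\, T_\varepsilon(\boldsymbol u_n-\boldsymbol u)
  -\frac1{|\Omega|}\int_\Omega\psi\, T_\varepsilon(\boldsymbol u_n-\boldsymbol u),
\]
where subtracting the mean keeps \(\boldsymbol\varphi_n\) in \(W_\perp^{1,p}\); the mean tends to zero by dominated convergence. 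Next we bound the right-hand term \(\mu_n\int_\Omega|\boldsymbol v_n|^{p^*-2}\boldsymbol v_n\cdot\boldsymbol\varphi_n\). Here \(|\boldsymbol v_n|^{p^*-1}\) is bounded in \(L^{(p^*)'}\) and \(|\boldsymbol\varphi_n|\le2\varepsilon\), so this term is \(O(\varepsilon)\). The term in which \(\nabla\psi\) appears tends to zero by strong \(L^p\) convergence. Combining these bounds, we obtain
\[
  \limsup_{n\to\infty}\int_{\{|\boldsymbol u_n-\boldsymbol u|<\varepsilon\}}
  \psi\bigl(|D\boldsymbol u_n|^{p-2}D\boldsymbol u_n-|D\boldsymbol u|^{p-2}D\boldsymbol u\bigr):
  D(\boldsymbol u_n-\boldsymbol u)\le C\varepsilon .
\]
The integrand is nonnegative because the map \(A\mapsto|A|^{p-2}A\) on \(\mathbb R^{m\times N}\) is strictly monotone; this uses the Frobenius structure, so the vectorial setting causes no difficulty. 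A standard H\"older argument with exponent \(\theta<1\), using that \(|\{|\boldsymbol u_n-\boldsymbol u|\ge\varepsilon\}|\to0\), then shows that \(e_n:=\bigl(|D\boldsymbol u_n|^{p-2}D\boldsymbol u_n-|D\boldsymbol u|^{p-2}D\boldsymbol u\bigr):D(\boldsymbol u_n-\boldsymbol u)\) satisfies \(e_n^\theta\to0\) in \(L^1\). Passing to a subsequence, \(e_n\to0\) a.e., and strict monotonicity yields \(D\boldsymbol u_n\to D\boldsymbol u\) a.e. Because \(\psi\) can be taken \(\equiv1\) on \(\overline\Omega\) (the Neumann setting requires no localization away from the boundary), this holds on all of \(\Omega\).

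Finally, \eqref{eq:resonant-brezis-lieb-energy} follows from the Brezis--Lieb lemma applied to the sequence \(D\boldsymbol u_n\), which is bounded in \(L^p(\Omega;\mathbb R^{m\times N})\) and converges a.e. to \(D\boldsymbol u\); the lemma holds for the Frobenius norm with the same proof as in the scalar case. We expect the main obstacle to be the gradient a.e. convergence, namely checking carefully that the nonlinear constraint term is controlled by \(\varepsilon\) uniformly in \(n\). This control relies only on the normalization \(\int_\Omega|\boldsymbol v_n|^{p^*}=1\) and on the mean-correction of the test function, which is what allows the test function to stay in \(W_\perp^{1,p}\).
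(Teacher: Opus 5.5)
Your overall scheme matches the paper's: Ekeland's principle on the $C^1$ manifold $\mathcal M=\{G=1/p^*\}$ (same check $G'(\boldsymbol u)[\boldsymbol u]=1$), identification of the multiplier, mean correction of test functions, weak limits, a.e.\ convergence of gradients, then Brezis--Lieb. The one difference is that the paper gets $D\boldsymbol u_n\to D\boldsymbol u$ a.e.\ by quoting the Dal Maso--Murat theorem for systems, while you reprove it with the Boccardo--Murat truncation argument. That is where your proof has a gap.

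For the vectorial truncation, $DT_\varepsilon(\boldsymbol w_n)=D\boldsymbol w_n$ holds only on $\{|\boldsymbol w_n|<\varepsilon\}$. On $E_n:=\{|\boldsymbol w_n|\geq\varepsilon\}$ one has, a.e.,
\[
  DT_\varepsilon(\boldsymbol w_n)=\frac{\varepsilon}{|\boldsymbol w_n|}\,P_nD\boldsymbol w_n,
  \qquad
  P_n:=I-\frac{\boldsymbol w_n\otimes\boldsymbol w_n}{|\boldsymbol w_n|^2},
\]
and this is nonzero when $m\geq2$. (For $m=1$, $P_n=0$ and $T_\varepsilon$ is constant beyond $\pm\varepsilon$.) Testing therefore produces an extra term
$\int_{E_n}\frac{\varepsilon}{|\boldsymbol w_n|}\bigl(|D\boldsymbol u_n|^{p-2}D\boldsymbol u_n-|D\boldsymbol u|^{p-2}D\boldsymbol u\bigr):P_nD\boldsymbol w_n$,
which your ``combining these bounds'' silently drops. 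Monotonicity of $A\mapsto|A|^{p-2}A$ does not give this term a sign, because $P_n$ breaks the monotone pairing. For example, take $p>2$, $P_nA$ and $P_nB$ parallel with $0<|P_nA|<|P_nB|$, and $|A|$ large compared with $|B|$; then $(|A|^{p-2}A-|B|^{p-2}B):P_n(A-B)<0$. So your remark that ``the vectorial setting causes no difficulty'' falls exactly where systems differ from scalar equations. This is the point the systems version of the a.e.\ convergence theorem has to handle. The constraint term you single out as the main obstacle is, by contrast, trivially $O(\varepsilon)$.

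The step can be repaired, but it needs the radial structure of $\xi\mapsto|\xi|^{p-2}\xi$, not just monotonicity. First, pair $|D\boldsymbol u|^{p-2}D\boldsymbol u$ with $DT_\varepsilon(\boldsymbol w_n)$ over all of $\Omega$; this tends to $0$ because $DT_\varepsilon(\boldsymbol w_n)\rightharpoonup\boldsymbol 0$ in $L^p$. Its restriction to $E_n$ is $o(1)$ because $|E_n|\to0$ and $|D\boldsymbol u|^p\in L^1$. Then bound the remaining piece from below. With $A=D\boldsymbol u_n$, $B=D\boldsymbol u$, and $P_n$ a symmetric projection,
\[
  |A|^{p-2}A:P_n(A-B)=|A|^{p-2}\bigl(|P_nA|^2-P_nA:P_nB\bigr)
  \geq
  \begin{cases}
    -\frac14|A|^{p-2}|B|^2, & p\geq2,\\
    -|B|^p, & 1<p<2.
  \end{cases}
\]
For $p<2$ this uses $|A|^{p-2}|P_nA|\leq|P_nA|^{p-1}<|B|^{p-1}$ on $\{0<|P_nA|<|B|\}$. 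Since $0<\varepsilon/|\boldsymbol w_n|\leq1$ on $E_n$, H\"older's inequality, the bound on $\|D\boldsymbol u_n\|_{L^p}$, and $\int_{E_n}|D\boldsymbol u|^p\to0$ give
$\liminf_n\int_{E_n}|D\boldsymbol u_n|^{p-2}D\boldsymbol u_n:DT_\varepsilon(\boldsymbol w_n)\geq0$.
Only with this does $\limsup_n\int_{\{|\boldsymbol w_n|<\varepsilon\}}e_n\leq C\varepsilon$ follow. After that, your $\theta$-H\"older argument, the strict-monotonicity step, and Brezis--Lieb go through as you describe. Alternatively, simply invoke the Dal Maso--Murat theorem, as the paper does.
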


\begin{proof}
By Lemma~\ref{lem:centered-functional-c1},
\[
  \mathcal M
  :=\mathcal H^{-1}(\mathbb S_{p^*,m}\cap\mathcal A_m)
  =\left\{\boldsymbol u\in W_\perp^{1,p}:G(\boldsymbol u)=\frac1{p^*}\right\}
\]
is closed in \(W_\perp^{1,p}\), and hence complete. It is also a \(C^1\)
manifold, since
\[
  G'(\boldsymbol u)[\boldsymbol u]
  =\int_\Omega
  |\boldsymbol u+\boldsymbol d(\boldsymbol u)|^{p^*}=1
  \qquad\text{for }\boldsymbol u\in\mathcal M.
\]
Applying Ekeland's variational principle to
\(\frac 1p\int_\Omega|D\boldsymbol u|^p\) on \(\mathcal M\), we obtain a
minimizing sequence \((\boldsymbol u_n)\subset\mathcal M\), numbers
\(\lambda_n\in\mathbb R\), and
\(\boldsymbol\sigma_n\to\boldsymbol0\) in \((W_\perp^{1,p})'\) such that,
with \(\boldsymbol v_n=\mathcal H(\boldsymbol u_n)\),
\begin{equation}\label{eq:resonant-approximate-euler-lagrange}
  \int_\Omega
  |D\boldsymbol u_n|^{p-2}D\boldsymbol u_n:D\boldsymbol\varphi
  -\lambda_n\int_\Omega
  |\boldsymbol v_n|^{p^*-2}\boldsymbol v_n\cdot\boldsymbol\varphi
  =\langle\boldsymbol\sigma_n,\boldsymbol\varphi\rangle
\end{equation}
for every \(\boldsymbol\varphi\in W_\perp^{1,p}\). Testing with
\(\boldsymbol u_n\), using \eqref{eq:centering-map} and the minimizing
property, yields \(\lambda_n=S_{0,m}+o(1)\). The resulting scalar error can
be absorbed into \(\boldsymbol\sigma_n\), because
\(\|\boldsymbol v_n\|_{L^{p^*}}=1\). Moreover, subtracting the mean from an
arbitrary test function changes neither term in
\eqref{eq:resonant-approximate-euler-lagrange}, again by
\eqref{eq:centering-map}. Hence
\begin{equation}\label{eq:resonant-euler-lagrange-reduced}
  -\boldsymbol\Delta_p\boldsymbol u_n
  -S_{0,m}|\boldsymbol v_n|^{p^*-2}\boldsymbol v_n
  =\boldsymbol\sigma_n,
  \qquad
  \boldsymbol\sigma_n\to\boldsymbol0
\end{equation}
in \(\bigl(W^{1,p}(\Omega;\mathbb R^m)\bigr)'\).

The sequence is bounded in
\(W_\perp^{1,p}(\Omega;\mathbb R^m)\). Hence, up to a subsequence,
\[
  \boldsymbol u_n\rightharpoonup\boldsymbol u
  \quad\text{in }W^{1,p}(\Omega;\mathbb R^m),
  \qquad
  \boldsymbol u_n\to\boldsymbol u
  \quad\text{in }L^p(\Omega;\mathbb R^m)
\]
and almost everywhere in \(\Omega\). Write
\[
  -\boldsymbol\Delta_p\boldsymbol u_n
  =\boldsymbol f_n+\boldsymbol g_n,
  \qquad
  \boldsymbol f_n:=\boldsymbol\sigma_n,
  \qquad
  \boldsymbol g_n:=
  S_{0,m}|\boldsymbol v_n|^{p^*-2}\boldsymbol v_n.
\]
Then \(\boldsymbol f_n\to\boldsymbol0\) in the dual space, while
\((\boldsymbol g_n)\) is bounded in
\(L^{(p^*)'}(\Omega;\mathbb R^m)\), and hence in
\(L^1(\Omega;\mathbb R^m)\). The almost-everywhere convergence theorem for
nonlinear elliptic systems in \cite{DalMasoMurat1998} therefore yields
\[
  D\boldsymbol u_n\to D\boldsymbol u
  \qquad\text{a.e. in }\Omega.
\]
The splitting identity follows from the Brezis--Lieb lemma
\cite{BrezisLieb1983}.
\end{proof}

We now prove the vectorial counterpart of
Theorem~\ref{thm:bounded-compactness} at resonance.

\begin{theorem}
\label{thm:resonant-compactness}
Assume that
\[
  S_{0,m}<\frac{S}{2^{p/N}}.
\]
Then \(S_{0,m}\) is attained by some
\(\boldsymbol v\in\mathcal A_m\).
\end{theorem}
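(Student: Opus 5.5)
We take the bounded Palais--Smale-type minimizing sequence \((\boldsymbol u_n)\subset\mathcal M\) from Lemma~\ref{lem:resonant-energy-splitting}, with \(\boldsymbol v_n=\boldsymbol u_n+\boldsymbol d(\boldsymbol u_n)\in\mathbb S_{p^*,m}\cap\mathcal A_m\), \(\boldsymbol u_n\rightharpoonup\boldsymbol u\) in \(W^{1,p}_\perp\), \(D\boldsymbol u_n\to D\boldsymbol u\) a.e., and the energy splitting \eqref{eq:resonant-brezis-lieb-energy}. Since \(|\boldsymbol d(\boldsymbol u_n)|\le 2|\Omega|^{-1/p^*}\) by the proof of Lemma~\ref{lem:centering}, we may also assume \(\boldsymbol d(\boldsymbol u_n)\to\overline{\boldsymbol d}\). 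Thus \(\boldsymbol v_n\rightharpoonup\boldsymbol v:=\boldsymbol u+\overline{\boldsymbol d}\) weakly in \(W^{1,p}\), strongly in \(L^p\), and a.e. The centering constraint passes to the limit because \(|\boldsymbol v_n|^{p^*-2}\boldsymbol v_n\) is bounded in \(L^{(p^*)'}\) and converges a.e., hence weakly, to \(|\boldsymbol v|^{p^*-2}\boldsymbol v\); testing against constants gives \(\int_\Omega|\boldsymbol v|^{p^*-2}\boldsymbol v=\boldsymbol0\). Set \(\boldsymbol z_n:=\boldsymbol v_n-\boldsymbol v=\boldsymbol w_n+(\boldsymbol d(\boldsymbol u_n)-\overline{\boldsymbol d})\), so that \(D\boldsymbol z_n=D\boldsymbol w_n\) and \(\boldsymbol z_n\to\boldsymbol0\) in \(L^p\).

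Next we apply Brezis--Lieb to the denominator:
\[
1=\int_\Omega|\boldsymbol v|^{p^*}+\int_\Omega|\boldsymbol z_n|^{p^*}+o(1).
\]
For the remainder we use the componentwise Cherrier inequality (Lemma~\ref{lem:cherrier}) applied to \(|\boldsymbol z_n|\), together with \(|\nabla|\boldsymbol z_n||\le|D\boldsymbol z_n|\) and \(\|\boldsymbol z_n\|_p\to0\). This gives
\[
\Bigl(\int_\Omega|\boldsymbol z_n|^{p^*}\Bigr)^{p/p^*}\le\Bigl(\tfrac{2^{p/N}}{S}+\eta\Bigr)\int_\Omega|D\boldsymbol w_n|^p+o(1).
\]
Put \(a:=\int_\Omega|\boldsymbol v|^{p^*}\in[0,1]\) and \(b:=\lim\int_\Omega|\boldsymbol z_n|^{p^*}=1-a\) along a subsequence. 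If \(\boldsymbol v\ne\boldsymbol0\), it lies in \(\mathcal A_m\), and \(\int_\Omega|D\boldsymbol v|^p=\int_\Omega|D\boldsymbol u|^p\ge S_{0,m}a^{p/p^*}\). Passing to the limit in \eqref{eq:resonant-brezis-lieb-energy} and letting \(\eta\to0\), we obtain
\[
S_{0,m}\ge S_{0,m}a^{p/p^*}+\frac{S}{2^{p/N}}(1-a)^{p/p^*},
\]
and this inequality also holds trivially if \(\boldsymbol v=\boldsymbol0\), i.e. \(a=0\).

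To conclude, suppose \(a<1\). Since \(p/p^*<1\), we have \(a^{p/p^*}+(1-a)^{p/p^*}\ge1\). Combining this with \(S/2^{p/N}>S_{0,m}\) and \((1-a)^{p/p^*}>0\) gives \(S_{0,m}>S_{0,m}(a^{p/p^*}+(1-a)^{p/p^*})\ge S_{0,m}\), which is a contradiction. Hence \(a=1\). Then \(\boldsymbol v\ne\boldsymbol0\), \(\boldsymbol v\in\mathcal A_m\cap\mathbb S_{p^*,m}\), and weak lower semicontinuity gives \(\int_\Omega|D\boldsymbol v|^p\le\liminf\int_\Omega|D\boldsymbol u_n|^p=S_{0,m}\). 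So \(\boldsymbol v\) attains \(S_{0,m}\) (and in fact \(\boldsymbol w_n\to\boldsymbol0\) strongly).

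The main obstacle is the step where the constraint passes to the limit and the constant shift \(\boldsymbol d(\boldsymbol u_n)\) is handled. In particular, one must check that the weak limit \(\boldsymbol v\) genuinely belongs to \(\mathcal A_m\), which the nonlinear constraint does not obviously guarantee, and that the Cherrier estimate applies to \(\boldsymbol z_n\) rather than \(\boldsymbol w_n\). The a.e. gradient convergence from Lemma~\ref{lem:resonant-energy-splitting} is what makes the energy splitting rigorous here. Note that \(S_{0,m}>0\) is not needed: if \(S_{0,m}=0\), then \(a<1\) yields \(0\ge S2^{-p/N}(1-a)^{p/p^*}>0\), again a contradiction.
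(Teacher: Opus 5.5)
Your proof is correct and follows the paper's argument: the same Ekeland minimizing sequence, the same identification of the limit in $\mathcal A_m$ via weak convergence of $|\boldsymbol v_n|^{p^*-2}\boldsymbol v_n$, Brezis--Lieb for the mass, and Cherrier's inequality applied to $|\boldsymbol z_n|$. The only difference is the endgame: you let $\eta\to0$ and use $a^{p/p^*}+(1-a)^{p/p^*}\ge1$ to force $a=1$, while the paper fixes $\eta$ small and uses superadditivity of $t\mapsto t^{p^*/p}$ to get $\int_\Omega|D\boldsymbol z_n|^p\to0$ first. One small slip: the bound on $\boldsymbol d(\boldsymbol u_n)$ does not follow from the proof of Lemma~\ref{lem:centering}, which bounds it in terms of $\|\boldsymbol u_n\|_{L^{p^*}}$; it follows from $\boldsymbol d(\boldsymbol u_n)=|\Omega|^{-1}\int_\Omega\boldsymbol v_n$, which gives $|\boldsymbol d(\boldsymbol u_n)|\le|\Omega|^{-1/p^*}$.
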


\begin{proof}
Let \((\boldsymbol u_n)\subset
\mathcal H^{-1}(\mathbb S_{p^*,m}\cap\mathcal A_m)\) be the
minimizing sequence given by Lemma~\ref{lem:resonant-energy-splitting}, set
\[
  \boldsymbol v_n:=\mathcal H(\boldsymbol u_n)
  =\boldsymbol u_n+\boldsymbol d(\boldsymbol u_n),
\]
and let \(\boldsymbol u\in W_\perp^{1,p}\) be the weak limit of
\((\boldsymbol u_n)\). Since \(\|\boldsymbol v_n\|_{L^{p^*}}=1\),
\eqref{eq:centering-homeomorphism-inverse} shows that
\((\boldsymbol d(\boldsymbol u_n))\) is bounded. Passing to a subsequence,
let
\[
  \boldsymbol d(\boldsymbol u_n)\longrightarrow\overline{\boldsymbol d}
  \quad\text{in }\mathbb R^m,
  \qquad
  \boldsymbol v:=\boldsymbol u+\overline{\boldsymbol d}.
\]
Then \(\boldsymbol v_n\to\boldsymbol v\) almost everywhere and in
\(L^p(\Omega;\mathbb R^m)\). Moreover,
\[
  |\boldsymbol v_n|^{p^*-2}\boldsymbol v_n
  \rightharpoonup
  |\boldsymbol v|^{p^*-2}\boldsymbol v
  \quad\text{in }L^{(p^*)'}(\Omega;\mathbb R^m).
\]
Since \(\boldsymbol v_n\in\mathcal A_m\), passing to the limit gives
\begin{equation}\label{eq:resonant-limit-centering}
  \int_\Omega
  |\boldsymbol v|^{p^*-2}\boldsymbol v=\boldsymbol0.
\end{equation}
Thus uniqueness in Lemma~\ref{lem:centering} yields
\(\overline{\boldsymbol d}=\boldsymbol d(\boldsymbol u)\), and hence
\(\boldsymbol v=\mathcal H(\boldsymbol u)\).

Set \(\boldsymbol z_n:=\boldsymbol v_n-\boldsymbol v\). Notice that
\(D\boldsymbol z_n=D(\boldsymbol u_n-\boldsymbol u)\). By the
Brezis--Lieb lemma,
\begin{equation}\label{eq:resonant-brezis-lieb-mass}
  1=\int_\Omega|\boldsymbol v|^{p^*}
  +\int_\Omega|\boldsymbol z_n|^{p^*}+o(1).
\end{equation}

Applying Cherrier's inequality to $|\boldsymbol{z}_n|$, using
\[
|\nabla|\boldsymbol{z}_n||\le |D\boldsymbol{z}_n|,
\]
and recalling that $\boldsymbol{z}_n\to\boldsymbol{0}$
in $L^p(\Omega;\mathbb{R}^m)$, we obtain, for every $\eta>0$,
\begin{equation}\label{eq:resonant-cherrier-defect}
  \left(\int_\Omega|\boldsymbol z_n|^{p^*}\right)^{p/p^*}
  \leq\left(\frac{2^{p/N}}{S}+\eta\right)
  \int_\Omega|D\boldsymbol z_n|^p+o(1).
\end{equation}
Together with \eqref{eq:resonant-brezis-lieb-mass}, this gives
\[
  1\leq\int_\Omega|\boldsymbol v|^{p^*}
  +\left[
  \left(\frac{2^{p/N}}{S}+\eta\right)
  \int_\Omega|D\boldsymbol z_n|^p
  \right]^{p^*/p}+o(1).
\]
Choose \(\eta>0\) so small that
\[
  S_{0,m}\left(\frac{2^{p/N}}{S}+\eta\right)<1.
\]
Multiplying the preceding inequality by \(S_{0,m}^{p^*/p}\), we find
\begin{equation}\label{eq:resonant-energy-comparison}
\begin{aligned}
  S_{0,m}^{p^*/p}
  \leq S_{0,m}^{p^*/p}
  \int_\Omega|\boldsymbol v|^{p^*}+
  \left[
  S_{0,m}\left(\frac{2^{p/N}}{S}+\eta\right)
  \right]^{p^*/p}
  \left(\int_\Omega|D\boldsymbol z_n|^p\right)^{p^*/p}
  +o(1).
\end{aligned}
\end{equation}

On the other hand, the minimizing property and
\eqref{eq:resonant-brezis-lieb-energy} imply
\[
  S_{0,m}+o(1)
  =\int_\Omega|D\boldsymbol u|^p
  +\int_\Omega|D\boldsymbol z_n|^p.
\]
By \eqref{eq:resonant-limit-centering}, the definition of \(S_{0,m}\) gives
\[
  \int_\Omega|D\boldsymbol v|^p
  \geq S_{0,m}
  \left(\int_\Omega|\boldsymbol v|^{p^*}\right)^{p/p^*},
\]
where the inequality is trivial if \(\boldsymbol v=\boldsymbol0\). Since
\(D\boldsymbol v=D\boldsymbol u\), consequently
\[
  S_{0,m}
  \geq S_{0,m}
  \left(\int_\Omega|\boldsymbol v|^{p^*}\right)^{p/p^*}
  +\int_\Omega|D\boldsymbol z_n|^p+o(1).
\]
Raising this inequality to the power \(p^*/p\), we obtain
\[
  S_{0,m}^{p^*/p}
  \geq S_{0,m}^{p^*/p}
  \int_\Omega|\boldsymbol v|^{p^*}
  +\left(\int_\Omega|D\boldsymbol z_n|^p\right)^{p^*/p}
  +o(1).
\]
Combining this estimate with \eqref{eq:resonant-energy-comparison}, we get
\[
  \left\{1-
  \left[
  S_{0,m}\left(\frac{2^{p/N}}{S}+\eta\right)
  \right]^{p^*/p}\right\}
  \left(\int_\Omega|D\boldsymbol z_n|^p\right)^{p^*/p}
  \leq o(1).
\]
The expression in braces is strictly positive, and therefore
\[
  \int_\Omega|D\boldsymbol z_n|^p\longrightarrow0.
\]
It follows from \eqref{eq:resonant-brezis-lieb-mass} and
\eqref{eq:resonant-cherrier-defect} that
\[
  \int_\Omega|\boldsymbol v|^{p^*}=1.
\]
Thus \(\boldsymbol v\in\mathbb S_{p^*,m}\cap\mathcal A_m\), and
\eqref{eq:resonant-brezis-lieb-energy} finally gives
\[
  \int_\Omega|D\boldsymbol v|^p=S_{0,m}.
\]
Hence \(\boldsymbol v\) is a minimizer.
\end{proof}

{
\begin{proposition}\label{prop:scalar-resonant-strict-bound}
Assume that \(N(N-2p+1)>p\). Then
\[
  S_0<\frac{S}{2^{p/N}}.
\]
\end{proposition}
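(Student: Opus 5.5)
We need a test function in \(\mathcal A_1\) whose Dirichlet quotient is strictly below \(S/2^{p/N}\). The plan is to glue a boundary bubble to a small negative constant and then recenter with the map of Lemma~\ref{lem:centering}. Fix a boundary point \(x_0=0\) with positive mean curvature \(H(0)>0\), as in the proof of Lemma~\ref{lem:ferrero-strict-bound}. Let \(U_\varepsilon\) be the bubble used there, multiplied by a cutoff if convenient.

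For \(w_\varepsilon\) we take \(u_\varepsilon:=w_\varepsilon+d_\varepsilon\) with \(d_\varepsilon:=d(w_\varepsilon)\in\mathbb R\) from Lemma~\ref{lem:centering}. Then \(u_\varepsilon\in\mathcal A_1\) and \(\nabla u_\varepsilon=\nabla w_\varepsilon\). By the minimizing characterization of \(d\),
\[
  \int_\Omega|u_\varepsilon|^{p^*}=\min_{d\in\mathbb R}\int_\Omega|U_\varepsilon+d|^{p^*}.
\]
The numerator is therefore exactly Ferrero's gradient term \(\frac{K_1}{2}-A_{N,p}H(0)\varepsilon^{(p-1)/p}+o(\varepsilon^{(p-1)/p})\), with the logarithmic version when \(N=2p-1\). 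All the work is in bounding the denominator from below, i.e.\ in controlling how much the optimal shift \(d_\varepsilon\) lowers the \(L^{p^*}\) mass.

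\textbf{Step 1: size of the shift.} The centering equation is \(\int_\Omega|U_\varepsilon+d|^{p^*-2}(U_\varepsilon+d)=0\). I expect \(d_\varepsilon<0\), with \(|d_\varepsilon|^{p^*-1}|\Omega|\approx\int_\Omega U_\varepsilon^{p^*-1}\) to leading order. The bubble decays like \(|x|^{-(N-p)/(p-1)}\), so \(\int_\Omega U_\varepsilon^{p^*-1}\sim\varepsilon^{\gamma}\) for an explicit \(\gamma\); when \(\int U^{p^*-1}\) converges, \(\gamma=\frac{N-p}{p}\) in the \(\varepsilon\) scaling. This gives \(|d_\varepsilon|\sim\varepsilon^{\gamma/(p^*-1)}\).

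\textbf{Step 2: mass expansion.} Split \(\Omega\) into the region where \(U_\varepsilon\gg|d_\varepsilon|\) and its complement. On the first region use the Taylor expansion
\[
  |U+d|^{p^*}\geq U^{p^*}-p^*U^{p^*-1}|d|.
\]
On the complement, \(|U+d|^{p^*}\ge0\) and the discarded \(U^{p^*}\) is small. This should give
\[
  \int_\Omega|u_\varepsilon|^{p^*}\ge\frac{K_2}{2}-B_{N,p}H(0)\varepsilon^{(p-1)/p}-C\,\varepsilon^{\gamma p^*/(p^*-1)}+o(\varepsilon^{(p-1)/p}).
\]
Here the error term is the product of \(|d_\varepsilon|\) and \(\int U^{p^*-1}\), up to a constant.

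\textbf{Step 3: compare exponents.} The condition \(N(N-2p+1)>p\) should be exactly the inequality \(\gamma p^*/(p^*-1)>(p-1)/p\), including the borderline where \(\int U^{p^*-1}\) diverges logarithmically. Under it, the curvature gain \(2H(0)\bigl(\alpha B_{N,p}/K_2-A_{N,p}/K_1\bigr)\varepsilon^{(p-1)/p}<0\) established in Lemma~\ref{lem:ferrero-strict-bound} dominates the centering loss. We conclude
\[
  S_0\le\frac{\int|\nabla u_\varepsilon|^p}{\|u_\varepsilon\|_{p^*}^p}<\frac{S}{2^{p/N}}
\]
for small \(\varepsilon\).

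The main obstacle is Steps 1 and 2: the precise asymptotics of \(d_\varepsilon\), and the lower bound on the shifted mass when \(p^*<2\). In that case \(t\mapsto|t|^{p^*-2}t\) is non-Lipschitz near zero, so the Taylor bound must be replaced by the elementary inequality
\[
  |a+b|^{p^*}\ge|a|^{p^*}+p^*|a|^{p^*-2}ab-C|b|^{p^*}.
\]
After that, I would verify the exponent bookkeeping against the stated dimensional condition.
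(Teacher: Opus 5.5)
This is essentially the paper's proof: the centered boundary bubble $U_\varepsilon+d_\varepsilon$, an unchanged gradient term, and a mass loss of at most $C\bigl(\int_\Omega U_\varepsilon^{p^*-1}\bigr)^{p^*/(p^*-1)}$, obtained from essentially your final elementary inequality. Only the upper bound $|d_\varepsilon|^{p^*-1}\leq C\int_\Omega U_\varepsilon^{p^*-1}$ is needed, not precise asymptotics; split the centering identity into positive and negative parts to get $\int_\Omega|U_\varepsilon+d_\varepsilon|^{p^*-1}\leq2\int_\Omega U_\varepsilon^{p^*-1}$, then use $|d_\varepsilon|\leq|U_\varepsilon+d_\varepsilon|+U_\varepsilon$.

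Fix the bookkeeping, though. Since $U_{1,0}^{p^*-1}\sim|y|^{-N-p/(p-1)}$ is always integrable, $\int_\Omega U_\varepsilon^{p^*-1}=O\bigl(\varepsilon^{(N-p)(p-1)/p^2}\bigr)$, and there is no logarithmic borderline. Your $(N-p)/p$ is the exponent in $\delta=\varepsilon^{(p-1)/p}$, and pairing it with $(p-1)/p$ would give the wrong condition. The loss is therefore $O(\varepsilon^{\gamma})$ with $\gamma=\frac{N(N-p)(p-1)}{p(N(p-1)+p)}$, and $\gamma>\frac{p-1}{p}$ is exactly $N(N-2p+1)>p$. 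This condition also forces $N>2p-1$, so Ferrero's logarithmic case never arises.
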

}

{
\begin{proof}
Recall that \(U_\varepsilon=U_{\varepsilon^{(p-1)/p},0}\) is the
Aubin--Talenti bubble used in the proof of
Lemma~\ref{lem:ferrero-strict-bound}, restricted to \(\Omega\) and
concentrating, as \(\varepsilon\to0\), at a boundary point of positive
mean curvature, taken to be the origin.
Let \(d_\varepsilon\in\mathbb R\) be given by
Lemma~\ref{lem:centering}, so that
\begin{equation}\label{eq:centered-bubble-condition}
  \int_\Omega
  |U_\varepsilon+d_\varepsilon|^{p^*-2}
  (U_\varepsilon+d_\varepsilon)=0.
\end{equation}
Since \(U_\varepsilon>0\), we have \(d_\varepsilon<0\). Splitting the
integral in \eqref{eq:centered-bubble-condition} over the positive and
negative sets gives
\begin{equation}\label{eq:centered-bubble-mass-balance}
  \int_{\{U_\varepsilon+d_\varepsilon>0\}}
  (U_\varepsilon+d_\varepsilon)^{p^*-1}
  =\int_{\{U_\varepsilon+d_\varepsilon<0\}}
  |U_\varepsilon+d_\varepsilon|^{p^*-1}.
\end{equation}
Consequently,
\begin{equation}\label{eq:centered-bubble-absolute-mass}
  \int_\Omega|U_\varepsilon+d_\varepsilon|^{p^*-1}
  =2\int_{\{U_\varepsilon+d_\varepsilon>0\}}
  (U_\varepsilon+d_\varepsilon)^{p^*-1}
  \leq2\int_\Omega U_\varepsilon^{p^*-1}.
\end{equation}
Moreover, by the triangle inequality,
\[
  |\Omega|\,|d_\varepsilon|^{p^*-1}
  \leq C\int_\Omega\left(
  U_\varepsilon^{p^*-1}
  +|U_\varepsilon+d_\varepsilon|^{p^*-1}
  \right),
\]
Combining this estimate with
\eqref{eq:centered-bubble-absolute-mass}, we obtain
\begin{equation}\label{eq:centered-bubble-shift-estimate}
  |d_\varepsilon|^{p^*-1}
  \leq C\int_\Omega U_\varepsilon^{p^*-1}.
\end{equation}

Using
\[
  |a-b|^{p^*}
  \geq a^{p^*}-C\left(ba^{p^*-1}+b^{p^*}\right),
  \qquad a,b\geq0,
\]
we obtain the required lower estimate
\begin{align}
  \int_\Omega|U_\varepsilon+d_\varepsilon|^{p^*}
  &\geq\int_\Omega U_\varepsilon^{p^*}
  -C|d_\varepsilon|
  \int_\Omega U_\varepsilon^{p^*-1}
  -C|d_\varepsilon|^{p^*}\notag\\
  &\geq\int_\Omega U_\varepsilon^{p^*}
  -C\left(
  \int_\Omega U_\varepsilon^{p^*-1}
  \right)^{p^*/(p^*-1)}.
  \label{eq:centered-bubble-lower-bound}
\end{align}
The change of variables
\(x=\varepsilon^{(p-1)/p}y\) gives
\[
  \int_\Omega U_\varepsilon^{p^*-1}
  =O\left(
  \varepsilon^{\frac{(N-p)(p-1)}{p^2}}
  \right).
\]
Therefore, by the boundary-bubble estimates in
\cite[Lemmas~1 and~2]{Ferrero2006},
\begin{align*}
  \int_\Omega|U_\varepsilon+d_\varepsilon|^{p^*}
  &\geq\frac{K_2}{2}
  -B_{N,p}H(0)\varepsilon^{(p-1)/p}
  -C\varepsilon^{\gamma_{N,p}}
  +o\bigl(\varepsilon^{(p-1)/p}\bigr),
\end{align*}
where
\[
  \gamma_{N,p}
  =\frac{N(N-p)(p-1)}
  {p\bigl(N(p-1)+p\bigr)}.
\]
By the assumption
\[
  {N(N-2p+1)>p},
\]
we have
\[
  \gamma_{N,p}>\frac{p-1}{p}.
\]
Thus $\varepsilon^{\gamma_{N,p}}=o\left(\varepsilon^{(p-1)/p}\right)$. Since
\(
\nabla(U_\varepsilon+d_\varepsilon)=\nabla U_\varepsilon
\), reasoning as in the proof of Lemma~\ref{lem:ferrero-strict-bound},
we get
\[
  S_0<\frac{S}{2^{p/N}}
\]
for sufficiently small \(\varepsilon\).
This proves the claim.
\end{proof}
}

\begin{proof}[Proof of Theorem~\ref{thm:resonant-existence}]
For every \(w\in\mathcal A_1\) and every
\(\boldsymbol c\in\mathbb S^{m-1}\), the map
\(\boldsymbol c w\) belongs to \(\mathcal A_m\) and has the same Sobolev
quotient as \(w\). Therefore
\[
  S_{0,m}\leq S_0.
\]
Since {\(N(N-2p+1)>p\)},
Proposition~\ref{prop:scalar-resonant-strict-bound}
gives
\[
  S_{0,m}\leq S_0<\frac{S}{2^{p/N}}.
\]
Theorem~\ref{thm:resonant-compactness} then yields a minimizer
\(\boldsymbol u\in\mathcal A_m\), which we normalize so that
\[
  \int_\Omega|\boldsymbol u|^{p^*}=1,
  \qquad
  \int_\Omega|D\boldsymbol u|^p=S_{0,m}.
\]

Set
\[
  \mathcal M
  :=\mathcal H^{-1}(\mathbb S_{p^*,m}\cap\mathcal A_m)
  =\left\{
  \boldsymbol w\in W_\perp^{1,p}(\Omega;\mathbb R^m):
  \int_\Omega
  |\boldsymbol w+\boldsymbol d(\boldsymbol w)|^{p^*}=1
  \right\},
  \qquad
  \widetilde{\boldsymbol u}:=\mathcal H^{-1}(\boldsymbol u)\in\mathcal M.
\]
Since \(\widetilde{\boldsymbol u}\) minimizes
\[
  \boldsymbol w\longmapsto\int_\Omega|D\boldsymbol w|^p
\]
on the \(C^1\) manifold \(\mathcal M\), the Lagrange multiplier theorem
gives some
\(\lambda\in\mathbb R\) such that
\[
  \int_\Omega
  |D{\boldsymbol u}|^{p-2}D{\boldsymbol u}
  :D\boldsymbol\varphi
  =\lambda\int_\Omega
  |\boldsymbol u|^{p^*-2}\boldsymbol u\cdot\boldsymbol\varphi
\]
for every \(\boldsymbol\varphi\in
W_\perp^{1,p}(\Omega;\mathbb R^m)\). Taking
\(\boldsymbol\varphi=\widetilde{\boldsymbol u}\), and using
\eqref{eq:centering-map}, yields \(\lambda=S_{0,m}\). Moreover, for an
arbitrary \(\boldsymbol\varphi\in W^{1,p}(\Omega;\mathbb R^m)\), we may
apply the preceding identity to
\[
  \boldsymbol\varphi
  -\frac1{|\Omega|}\int_\Omega\boldsymbol\varphi.
\]
 We conclude that
\[
  \int_\Omega |D\boldsymbol u|^{p-2}D\boldsymbol u:D\boldsymbol\varphi
  =S_{0,m}\int_\Omega
  |\boldsymbol u|^{p^*-2}\boldsymbol u\cdot\boldsymbol\varphi
\]
for every \(\boldsymbol\varphi\in W^{1,p}(\Omega;\mathbb R^m)\). Hence
\(\boldsymbol u\) satisfies
\[
  \begin{cases}
    -\boldsymbol\Delta_p\boldsymbol u
      =S_{0,m}|\boldsymbol u|^{p^*-2}\boldsymbol u
      &\text{in }\Omega,\\
    |D\boldsymbol u|^{p-2}D\boldsymbol u\,\nu=0
      &\text{on }\partial\Omega.
  \end{cases}
\]
Finally,
\[
  \boldsymbol v:=S_{0,m}^{1/(p^*-p)}\boldsymbol u
\]
is a nontrivial weak solution of \eqref{P0}. We complete the proof
with the regularity argument.

We first adapt Step~1 in the proof of
\cite[Theorem~3.1]{MauroMultiplicityRegularity}. We claim that
\begin{equation}\label{eq:resonant-all-finite-integrability}
  \boldsymbol v\in L^q(\Omega;\mathbb R^m)
  \qquad\text{for every }q>1.
\end{equation}
Fix \(s\geq0\) and \(L>0\), and use the test function
\[
  \boldsymbol\varphi
  :=\boldsymbol v\min\{|\boldsymbol v|^{2s},L^2\}.
\]
Set also
\[
  \boldsymbol\psi
  :=\boldsymbol v
  \left(\min\{|\boldsymbol v|^{2s},L^2\}\right)^{1/p}.
\]
Testing \eqref{P0} directly with
\(\boldsymbol\varphi\), we obtain
\[
  \int_\Omega|D\boldsymbol v|^{p-2}D\boldsymbol v
  :D\boldsymbol\varphi
  =\int_\Omega|\boldsymbol v|^{p^*}
  \min\{|\boldsymbol v|^{2s},L^2\}.
\]
Set
\[
  I:=\int_\Omega|D\boldsymbol v|^{p-2}D\boldsymbol v
  :D\boldsymbol\varphi.
\]
As in Step~1 of the cited proof,
\begin{align*}
  I
  &=\int_\Omega|D\boldsymbol v|^p
  \min\{|\boldsymbol v|^{2s},L^2\}+2s\int_{\{|\boldsymbol v|^s\leq L\}}
  |\boldsymbol v|^{2s-1}|D\boldsymbol v|^{p-2}
  D\boldsymbol v:
  (\boldsymbol v\otimes\nabla|\boldsymbol v|)\\
  &=\int_\Omega|D\boldsymbol v|^p
  \min\{|\boldsymbol v|^{2s},L^2\}+2s\int_{\{|\boldsymbol v|^s\leq L\}}
  |\boldsymbol v|^{2s}|D\boldsymbol v|^{p-2}
  |\nabla|\boldsymbol v||^2\\
  &\geq\int_\Omega|D\boldsymbol v|^p
  \min\{|\boldsymbol v|^{2s},L^2\},
\end{align*}
where we have used
\[
  D\boldsymbol v:(\boldsymbol v\otimes\nabla|\boldsymbol v|)
  =|\boldsymbol v|\,|\nabla|\boldsymbol v||^2.
\]

Moreover,
\[
  D\boldsymbol\psi=
  \begin{cases}
  |\boldsymbol v|^{2s/p}D\boldsymbol v
  +\dfrac{2s}{p}|\boldsymbol v|^{2s/p-1}
  \boldsymbol v\otimes\nabla|\boldsymbol v|
  &\text{if }|\boldsymbol v|^s\leq L,\\[6pt]
  L^{2/p}D\boldsymbol v
  &\text{if }|\boldsymbol v|^s\geq L,
  \end{cases}
\]
and hence
\[
  |D\boldsymbol\psi|^p=
  \begin{cases}
  |\boldsymbol v|^{2s}
  \left(
  |D\boldsymbol v|^2
  +\dfrac{4s}{p}\left(1+\dfrac{s}{p}\right)
  |\nabla|\boldsymbol v||^2
  \right)^{p/2}
  &\text{if }|\boldsymbol v|^s\leq L,\\[8pt]
  L^2|D\boldsymbol v|^p
  &\text{if }|\boldsymbol v|^s\geq L.
  \end{cases}
\]
It follows that
\begin{align*}
  \int_\Omega|D\boldsymbol\psi|^p
  &\leq C_2(s)\int_\Omega|D\boldsymbol v|^p
  \min\{|\boldsymbol v|^{2s},L^2\}\leq C_2(s)I
  =C_2(s)\int_\Omega|\boldsymbol v|^{p^*}
  \min\{|\boldsymbol v|^{2s},L^2\},
\end{align*}
where
\[
  C_2(s)=2\kappa_p
  \left[1+\frac{4s}{p}\left(1+\frac{s}{p}\right)\right]^{p/2},
  \qquad
  \kappa_p=
  \begin{cases}
  2^{p/2-1},&p\geq2,\\
  1,&1<p\leq2.
  \end{cases}
\]
Consequently,
\begin{align*}
  \int_\Omega|D\boldsymbol\psi|^p
  +\int_\Omega|\boldsymbol\psi|^p
  &\leq C_2(s)\int_\Omega|\boldsymbol v|^{p^*}
  \min\{|\boldsymbol v|^{2s},L^2\}
  +\int_\Omega|\boldsymbol\psi|^p=C_2(s)\int_\Omega a(x)|\boldsymbol\psi|^p
  +\int_\Omega|\boldsymbol\psi|^p,
\end{align*}
where
\[
  a(x):=|\boldsymbol v(x)|^{p^*-p}\in L^{N/p}(\Omega),
\]
because \(\boldsymbol v\in L^{p^*}(\Omega;\mathbb R^m)\). By the Sobolev
embedding on \(W^{1,p}(\Omega)\),
\begin{equation}\label{eq:resonant-sobolev-psi}
  C_S\|\boldsymbol\psi\|_{p^*}^p
  \leq C_2(s)\int_\Omega a(x)|\boldsymbol\psi|^p
  +\|\boldsymbol\psi\|_p^p
\end{equation}
for some \(C_S>0\). For every \(K>0\), H\"older's inequality gives
\begin{align*}
  \int_\Omega a(x)|\boldsymbol\psi|^p
  &=\int_{\{a\leq K\}}a(x)|\boldsymbol\psi|^p
  +\int_{\{a>K\}}a(x)|\boldsymbol\psi|^p\\
  &\leq K\|\boldsymbol\psi\|_p^p
  +\left(\int_{\{a>K\}}a(x)^{N/p}\right)^{p/N}
  \|\boldsymbol\psi\|_{p^*}^p\\
  &=K\|\boldsymbol\psi\|_p^p
  +\varepsilon(K)\|\boldsymbol\psi\|_{p^*}^p,
\end{align*}
where \(\varepsilon(K)\to0\) as \(K\to\infty\). Choose \(K=K(s)\)
such that
\[
  C_S-C_2(s)\varepsilon(K)\geq\frac{C_S}{2}.
\]
It follows from \eqref{eq:resonant-sobolev-psi} that
\[
  \frac{C_S}{2}\|\boldsymbol\psi\|_{p^*}^p
  \leq\bigl(C_2(s)K+1\bigr)\|\boldsymbol\psi\|_p^p.
\]
Passing to the limit as \(L\to\infty\), we obtain
\begin{equation}\label{eq:resonant-integrability-step}
  \left(
  \int_\Omega
  |\boldsymbol v|^{(2s+p)p^*/p}
  \right)^{p/p^*}
  \leq C(s)\int_\Omega|\boldsymbol v|^{2s+p}.
\end{equation}
Starting from \(\boldsymbol v\in L^{p^*}\), choose successively
\[
  2s_n+p=q_n,
  \qquad q_0=p^*,
  \qquad q_{n+1}=\frac{p^*}{p}q_n.
\]
Estimate \eqref{eq:resonant-integrability-step} shows inductively that
\(\boldsymbol v\in L^{q_n}\) for every \(n\). Since \(q_n\to\infty\),
\eqref{eq:resonant-all-finite-integrability} follows.


Consequently, setting
\[
    \mathbf f:=|\mathbf v|^{p^*-2}\mathbf v,
\]
we have $\mathbf f\in L^r(\Omega;\mathbb R^m)$ for every $r>1$.
Since $\int_\Omega\mathbf f=0$, let $\boldsymbol\psi$ be the unique
mean-zero solution, componentwise, of
\[
\begin{cases}
-\Delta\boldsymbol\psi=\mathbf f & \text{in }\Omega,\\
\partial_\nu\boldsymbol\psi=0 & \text{on }\partial\Omega,\\
\displaystyle\int_\Omega\boldsymbol\psi=0.
\end{cases}
\]
Fix $r>N$. Standard elliptic regularity gives
$\boldsymbol\psi\in W^{2,r}(\Omega;\mathbb R^m)$, and hence
\[
D\boldsymbol\psi\in C^{0,\alpha}(\overline\Omega)
\]
for some $\alpha>0$. Therefore $(\mathcal P_0)$ can be written as
\[
-\operatorname{div}(|D\mathbf v|^{p-2}D\mathbf v)
=-\operatorname{div}(D\boldsymbol\psi).
\]
Choosing $\beta>0$ sufficiently small, the boundary regularity result
\cite[Theorem~4.6]{BreitCianchiDieningKuusiSchwarzacher2017} yields
\[
|D\mathbf v|^{p-2}D\mathbf v\in C^{0,\beta}(\overline\Omega).
\]
Since $\xi\mapsto|\xi|^{p-2}\xi$ is a homeomorphism, it follows that
\[
\mathbf v\in C^1(\overline\Omega;\mathbb R^m).
\]

 The same regularity holds for the minimizer
\(\boldsymbol u\), and the proof is complete.
\end{proof}

{
\begin{remark}
Under the standing assumptions on $\Omega$, every weak solution
$\boldsymbol v\in W^{1,p}(\Omega;\mathbb R^m)$ of $(\mathcal P_0)$
belongs to $C^1(\overline\Omega;\mathbb R^m)$, for every $1<p<N$.
Indeed, the bootstrap argument in the proof of Theorem~1.4
uses only the weak formulation and the Sobolev embedding,
and yields $\boldsymbol v\in L^q(\Omega;\mathbb R^m)$ for
every finite $q>1$. The subsequent regularity argument
therefore applies unchanged.
\end{remark}
}

\subsection{Radial solutions and symmetry breaking}

\begin{lemma}\label{lem:radial-zero-cauchy-data}
Let \(\Omega\) be a ball or an annulus centered at the origin, and let
\(\boldsymbol v(x)=\boldsymbol V(|x|)\) be a radial solution of
\eqref{P0}. If
\[
  r_0\in[0,R]\quad\text{when }\Omega=B_R,
  \qquad
  r_0\in[R_1,R_2]\quad\text{when }
  \Omega=B_{R_2}\setminus\overline{B}_{R_1},
\]
and
\[
  \boldsymbol V(r_0)=\boldsymbol V'(r_0)=\boldsymbol0,
\]
then \(\boldsymbol v\equiv\boldsymbol0\).
\end{lemma}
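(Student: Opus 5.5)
The plan is to use the radial ODE in integrated form and show, by a local contraction-type estimate, that the set of radii where the Cauchy data vanish is both open and closed. The essential point is that the nonlinearity is superlinear relative to the degeneracy of the operator: \(p^*-1>p-1\).

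\textbf{Setting up the integrated ODE.} By the Remark following the proof of Theorem~\ref{thm:resonant-existence}, every weak solution of \eqref{P0} lies in \(C^1(\overline\Omega;\mathbb R^m)\). Hence \(\boldsymbol V\in C^1\) on the relevant closed interval \(I\) (namely \([0,R]\) or \([R_1,R_2]\)), and \(|D\boldsymbol v|=|\boldsymbol V'(|x|)|\). Testing \eqref{P0} with radial test functions gives, for all \(r,r_0\in I\) (with \(r,r_0>0\) when the weight matters),
\[
  r^{N-1}|\boldsymbol V'(r)|^{p-2}\boldsymbol V'(r)
  -r_0^{N-1}|\boldsymbol V'(r_0)|^{p-2}\boldsymbol V'(r_0)
  =-\int_{r_0}^{r}s^{N-1}|\boldsymbol V(s)|^{p^*-2}\boldsymbol V(s)\,ds .
\]
The same identity holds when \(r_0=0\), where the left boundary term vanishes. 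I would derive it in the standard way, with test functions concentrated on thin spherical shells, using the \(C^1\) regularity to pass to pointwise values.

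\textbf{Local vanishing.} Assume \(\boldsymbol V(r_0)=\boldsymbol V'(r_0)=\boldsymbol0\). For small \(\delta>0\), set \(J_\delta:=I\cap[r_0-\delta,r_0+\delta]\) and \(M(\delta):=\max_{J_\delta}|\boldsymbol V|\).

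First, for \(r\in J_\delta\), bound the flux:
\[
  |\boldsymbol V'(r)|^{p-1}\le \Bigl|\int_{r_0}^r (s/r)^{N-1}|\boldsymbol V|^{p^*-1}ds\Bigr|\le C\,\delta\,M(\delta)^{p^*-1}.
\]
When \(r_0>0\), this uses \((s/r)^{N-1}\le C\) for \(s,r\) near \(r_0\). When \(r_0=0\), it uses \(s\le r\).

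Second, integrate \(\boldsymbol V'\) from \(r_0\), where \(\boldsymbol V(r_0)=\boldsymbol0\), to get
\[
  M(\delta)\le \delta\max_{J_\delta}|\boldsymbol V'|\le C\,\delta^{p/(p-1)}M(\delta)^{(p^*-1)/(p-1)}.
\]
If \(M(\delta)>0\), this yields
\[
  1\le C\delta^{p/(p-1)}M(\delta)^{(p^*-p)/(p-1)}.
\]
This is impossible for small \(\delta\), since \(p^*>p\) and \(M(\delta)\to0\). Hence \(\boldsymbol V\equiv\boldsymbol0\) on \(J_\delta\). This is the only delicate step. Standard Lipschitz uniqueness fails for \(p\neq2\) because the operator degenerates (\(p>2\)) or is singular (\(p<2\)). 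The argument above avoids Lipschitz uniqueness entirely; it needs only \(p^*-1>p-1\) and the correct handling of the weight \(r^{N-1}\) near \(r=0\) and at the endpoints of \(I\), where \(J_\delta\) is one-sided.

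\textbf{Connectedness.} Let \(Z:=\{r\in I:\boldsymbol V(r)=\boldsymbol V'(r)=\boldsymbol0\}\). By continuity of \(\boldsymbol V\) and \(\boldsymbol V'\), \(Z\) is closed. By the local step, applied at every point of \(Z\) (including endpoints and possibly \(r=0\)), \(Z\) is relatively open in \(I\). Since \(r_0\in Z\) and \(I\) is connected, \(Z=I\). Therefore \(\boldsymbol v\equiv\boldsymbol0\).
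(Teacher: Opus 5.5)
Your argument is correct, but it takes a different route from the paper. The paper uses a Lyapunov-type argument. With $E=\frac{p-1}{p}|\boldsymbol V'|^p+\frac1{p^*}|\boldsymbol V|^{p^*}$ it computes $E'=-\frac{N-1}{r}|\boldsymbol V'|^p\le0$ and $(r^{k}E)'=\frac{k}{p^*}r^{k-1}|\boldsymbol V|^{p^*}\ge0$, where $k=(N-1)p'$. So $E(r_0)=0$ forces $E=0$ for $r\ge r_0$ by the first monotonicity and for $r\le r_0$ by the second. The case $r_0=0$ follows from $0\le E\le E(0)$. This gives global vanishing in one step, with no open--closed argument.

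You instead work with the once-integrated flux identity and prove a purely local estimate, $M(\delta)\le C\delta^{p/(p-1)}M(\delta)^{(p^*-1)/(p-1)}$, and then propagate vanishing by connectedness of $I$. Your handling of the weight near $r=0$ via $s\le r$ also treats the centre uniformly.

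The two methods need different hypotheses.
\begin{itemize}
\item The energy method uses the variational structure and the focusing sign of the nonlinearity. It needs no growth comparison at zero, so it would still work for any power $|\boldsymbol V|^{q-2}\boldsymbol V$ with $q>1$, including sublinear ones.
\item Your method uses only the size bound $|f(\boldsymbol V)|\le C|\boldsymbol V|^{q-1}$ with $q\ge p$; the estimate still closes in the $(p-1)$-homogeneous eigenvalue case. It needs no sign or gradient structure, so it would carry over to non-variational or sign-indefinite right-hand sides. It also treats the degenerate and singular regimes in the same way, since you never differentiate the flux.
\end{itemize}
Both proofs rely equally on $\boldsymbol V\in C^1(\overline I)$, which comes from the global regularity remark.
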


\begin{proof}

{
The scalar case follows from
\cite[Theorem~4, case~$(\delta)(i)$]{ReichelWalter1997}.
We give a direct energy proof for the vectorial equation; compare
\cite[proof of Theorem~3.2]{HyndKawohlLindqvist2023}
for the analogous argument in the autonomous eigenvalue problem.

Let \(I=(0,R)\) if \(\Omega=B_R\), and
\(I=(R_1,R_2)\) if
\(\Omega=B_{R_2}\setminus\overline{B}_{R_1}\).
Set \(p'=p/(p-1)\) and
\[
  \boldsymbol Z:=|\boldsymbol V'|^{p-2}\boldsymbol V'.
\]
The radial equation gives \(\boldsymbol Z\in C^1(I;\mathbb R^m)\) and
\[
  \boldsymbol Z'
  +\frac{N-1}{r}\boldsymbol Z
  +|\boldsymbol V|^{p^*-2}\boldsymbol V
  =\boldsymbol0
  \qquad\text{in }I.
\]
Define
\[
  E(r):=\frac1{p'}|\boldsymbol Z(r)|^{p'}
  +\frac1{p^*}|\boldsymbol V(r)|^{p^*}
  =\frac{p-1}{p}|\boldsymbol V'(r)|^p
  +\frac1{p^*}|\boldsymbol V(r)|^{p^*}.
\]
Then \(E\in C(\overline I)\cap C^1(I)\) and \(E\geq0\).
Since \(|\boldsymbol Z|^{p'-2}\boldsymbol Z=\boldsymbol V'\),
the chain rule yields
\[
  E'
  =\boldsymbol V'\cdot
  \left(\boldsymbol Z'+|\boldsymbol V|^{p^*-2}\boldsymbol V\right)
  =-\frac{N-1}{r}|\boldsymbol V'|^p
  \leq0
  \qquad\text{in }I.
\]
This computation remains valid at the zeros of \(\boldsymbol V'\),
because \(\boldsymbol z\mapsto|\boldsymbol z|^{p'}\) is of class \(C^1\).
Moreover, setting \(k=(N-1)p'\), we obtain
\[
  (r^kE(r))'
  =\frac{k}{p^*}r^{k-1}|\boldsymbol V(r)|^{p^*}
  \geq0
  \qquad\text{in }I.
\]
By assumption, \(E(r_0)=0\). If \(r_0>0\), the monotonicity
of \(E\) gives \(E=0\) for every \(r\ge r_0\), while that
of \(r^kE\) gives \(E=0\) for every \(0<r\le r_0\).
Continuity then yields \(E\equiv0\) on \(\overline I\).
If \(r_0=0\), continuity at the origin and the monotonicity of \(E\)
give \(0\leq E(r)\leq E(0)=0\) for every \(r\in\overline I\).
In either case, \(\boldsymbol V\equiv\boldsymbol0\), as claimed.

}
\end{proof}

\begin{proof}[Proof of Theorem~\ref{prop:radial-synchronization}]
Write $\boldsymbol v(x)=\boldsymbol V(|x|)$. Let $I=(0,R)$ if
$\Omega=B_R$ and
$I=(R_1,R_2)$ if $\Omega=B_{R_2}\setminus\overline{B}_{R_1}$. With a
slight abuse of notation, we set $\partial I=\{R\}$ in the first case and
$\partial I=\{R_1,R_2\}$ in the second. Its radial equation and Neumann
boundary condition are
\[
  \begin{cases}
    -\left(r^{N-1}|\boldsymbol V'|^{p-2}(V^i)'\right)'
    =r^{N-1}|\boldsymbol V|^{p^*-2}V^i
      &\text{in }I,\\
    |\boldsymbol V'|^{p-2}(V^i)'=0
      &\text{on }\partial I.
  \end{cases}
  \qquad i=1,\ldots,m.
\]
For every $i,j=1,\ldots,m$, subtracting the equation for $V^j$ multiplied
by $V^i$ from the equation for $V^i$ multiplied by $V^j$ gives
\[
  \frac{d}{dr}\left[
  r^{N-1}|\boldsymbol V'|^{p-2}
  \bigl((V^i)'V^j-(V^j)'V^i\bigr)
  \right]=0.
\]
The Neumann condition shows that this constant is zero. Hence
\(
  (V^i)'V^j=(V^j)'V^i
\)
for all $i,j$. Set $\rho:=|\boldsymbol V|$. On the open set where
$\rho>0$, we have
\[
  \left(\frac{V^i}{\rho}\right)'
  =\frac{1}{\rho^3}\sum_{j=1}^m V^j
  \bigl((V^i)'V^j-V^i(V^j)'\bigr)=0,
  \qquad i=1,\ldots,m.
\]
Therefore, on every connected component $J$ of 
 $\{r\in I:\boldsymbol V(r)\neq\boldsymbol0\}$, there exists
$\boldsymbol c_J\in\mathbb S^{m-1}$ such that
\[
  \frac{\boldsymbol V}{|\boldsymbol V|}=\boldsymbol c_J
  \qquad\text{in }J.
\]
It remains to compare the constants associated with two components of this
open set separated by a zero $r_0$ of $\boldsymbol V$. If
$\boldsymbol V'(r_0)\neq\boldsymbol0$, then
\[
  \boldsymbol V(r)
  =(r-r_0)\boldsymbol V'(r_0)+o(|r-r_0|),
  \qquad
  |\boldsymbol V(r)|
  =|r-r_0|\,|\boldsymbol V'(r_0)|+o(|r-r_0|).
\]
Consequently,
\[
  \frac{\boldsymbol V(r)}{|\boldsymbol V(r)|}
  =\frac{r-r_0}{|r-r_0|}
  \frac{\boldsymbol V'(r_0)}{|\boldsymbol V'(r_0)|}+o(1)
  \qquad\text{as }r\to r_0.
\]

Thus the two constants may differ by a sign.

By Lemma~\ref{lem:radial-zero-cauchy-data}, the alternative
$\boldsymbol V'(r_0)=\boldsymbol0$ would force the solution to be trivial.
Consequently, for a nontrivial solution, the constants $\boldsymbol c_J$
associated with any two connected components coincide up to a sign. Fix one
component $J_0$ and set $\boldsymbol c:=\boldsymbol c_{J_0}$. For every
connected component $J$, there is then a sign
$\sigma_J\in\{-1,1\}$ such that
$\boldsymbol c_J=\sigma_J\boldsymbol c$. Define
\[
  w(r):=\boldsymbol c\cdot\boldsymbol V(r).
\]
On each $J$ we have
\[
  w=\boldsymbol c\cdot\boldsymbol V
  =\sigma_J\boldsymbol c_J\cdot\boldsymbol V
  =\sigma_J|\boldsymbol V|,
  \qquad
  \boldsymbol V
  =|\boldsymbol V|\boldsymbol c_J
  =\boldsymbol c\,\sigma_J|\boldsymbol V|
  =\boldsymbol c\,w,
\]
and the same identity holds at the zeros of $\boldsymbol V$. Hence
$\boldsymbol v=\boldsymbol c\,w$ throughout $\Omega$. Moreover,
$w\in C^1(\overline\Omega)$ because $w=\boldsymbol c\cdot\boldsymbol v$,
and substitution into \eqref{P0}, using $|\boldsymbol c|=1$, shows that
$w$ is a scalar radial solution. This proves the assertion.
\end{proof}

We now prove the symmetry-breaking result on the ball.

\begin{proof}[Proof of Corollary~\ref{thm:ball-symmetry-breaking}]
Suppose by contradiction that $\boldsymbol v$ is a nontrivial radial
solution of \eqref{P0}. By
Theorem~\ref{prop:radial-synchronization}, it has the form
$\boldsymbol v=\boldsymbol c\,w$, where
$\boldsymbol c\in\mathbb S^{m-1}$ and $w$ is a nontrivial scalar radial
solution. The compatibility condition becomes
\[
  \int_{B_R}|w|^{p^*-2}w=0,
\]
so $w$ changes sign. Since $w$ is a \(C^1\) radial function on the ball,
symmetry gives \(w'(0)=0\). If \(w(0)=0\), then
Lemma~\ref{lem:radial-zero-cauchy-data}, applied to
\(\boldsymbol V=\boldsymbol c\,w\), would imply
\(\boldsymbol v\equiv\boldsymbol0\), a contradiction. Hence \(w(0)\neq0\).
After changing its sign if necessary, we may assume that \(w(0)>0\), and
its first zero \(r_0\) therefore satisfies \(r_0\in(0,R)\). The restriction
of \(w\) to \(B_{r_0}\) is then a positive solution of
\[
  -\Delta_pw=w^{p^*-1}\quad\text{in }B_{r_0},
  \qquad w=0\quad\text{on }\partial B_{r_0}.
\]
This is impossible by the Pohozaev identity for the critical
$p$-Laplacian on a star-shaped domain \cite{GueddaVeron1989}. Consequently,
no nontrivial solution of \eqref{P0} can be radial. This is the
$p$-Laplacian counterpart of the symmetry-breaking argument in
\cite[Remark~5.1]{SaldanaTavares2022}.
\end{proof}


\begin{thebibliography}{99}

\bibitem{ArioliGazzola1998}
G.~Arioli and F.~Gazzola,
\emph{Some results on $p$-Laplace equations with a critical growth term},
Differential Integral Equations \textbf{11} (1998), no.~2, 311--326.

\bibitem{AdimurthiMancini1991}
Adimurthi and G.~Mancini,
\emph{The Neumann problem for elliptic equations with critical
nonlinearity},
in \emph{Nonlinear Analysis}, Scuola Norm. Sup. Pisa Quaderni,
Scuola Norm. Sup., Pisa, 1991, 9--25.

\bibitem{AdimurthiYadava1994}
Adimurthi and S.~L. Yadava,
\emph{Some remarks on Sobolev type inequalities},
Calc. Var. Partial Differential Equations \textbf{2} (1994), 427--442.

\bibitem{AdimurthiYadavaLinNi1993}
Adimurthi and S.~L. Yadava,
\emph{On a conjecture of Lin--Ni for a semilinear Neumann problem},
Trans. Amer. Math. Soc. \textbf{336} (1993), no.~2, 631--637.

\bibitem{BiagiEspositoRoncoroniVecchi2025}
S.~Biagi, F.~Esposito, A.~Roncoroni, and E.~Vecchi,
\emph{Brezis--Nirenberg type results for the anisotropic
$p$-Laplacian},
J. Lond. Math. Soc. \textbf{112} (2025), e70331.


\bibitem{BreitCianchiDieningKuusiSchwarzacher2017}
D.~Breit, A.~Cianchi, L.~Diening, T.~Kuusi and S.~Schwarzacher,
\emph{The $p$-Laplace system with right-hand side in divergence form:
inner and up to the boundary pointwise estimates},
Nonlinear Anal. \textbf{153} (2017), 200--212.

\bibitem{BrezisLieb1983}
H.~Brezis and E.~Lieb,
\emph{A relation between pointwise convergence of functions and convergence
of functionals},
Proc. Amer. Math. Soc. \textbf{88} (1983), no.~3, 486--490.

\bibitem{CaffarelliGidasSpruck1989}
L.~Caffarelli, B.~Gidas, and J.~Spruck,
\emph{Asymptotic symmetry and local behavior of semilinear elliptic
equations with critical Sobolev growth},
Comm. Pure Appl. Math. \textbf{42} (1989), no.~3, 271--297.

\bibitem{CaninoMauroVectorial}
A.~Canino and S.~Mauro,
\emph{Existence results for variational quasilinear elliptic systems
involving the vectorial $p$-Laplacian},
arXiv preprint arXiv:2510.15694, 2025.

\bibitem{CaninoMauroCompetitive}
A.~Canino and S.~Mauro,
\emph{Quasilinear elliptic cooperative and competitive systems},
arXiv preprint arXiv:2510.18758, 2025.


\bibitem{ChenLi1991}
W.~X. Chen and C.~Li,
\emph{Classification of solutions of some nonlinear elliptic equations},
Duke Math. J. \textbf{62} (1991), no.~3, 615--622.

\bibitem{Cherrier1984}
P.~Cherrier,
\emph{Meilleures constantes dans des in\'egalit\'es relatives aux espaces de
Sobolev},
Bull. Sci. Math. \textbf{108} (1984), no.~2, 225--262.


\bibitem{CHS2026}
H.~Chtioui, H.~Hajaiej and L.~Sharma,
\emph{The role of the mean curvature in nonlinear $p$-Laplacian
problems with critical exponent},
preprint (2026), arXiv:2604.03378.

\bibitem{CianchiMazya2014}
A.~Cianchi and V.~G. Maz'ya,
\emph{Global boundedness of the gradient for a class of nonlinear elliptic
systems},
Arch. Ration. Mech. Anal. \textbf{212} (2014), no.~1, 129--177.

\bibitem{ClappSzulkin2019}
M.~Clapp and A.~Szulkin,
\emph{A simple variational approach to weakly coupled competitive elliptic
systems},
NoDEA Nonlinear Differential Equations Appl. \textbf{26} (2019),
Paper No.~26.

\bibitem{ComteKnaap1991}
M.~Comte and M.~C. Knaap,
\emph{Existence of solutions of elliptic equations involving critical
Sobolev exponents with Neumann boundary condition in general domains},
Differential Integral Equations \textbf{4} (1991), no.~6, 1133--1146.

\bibitem{DalMasoMurat1998}
G.~Dal Maso and F.~Murat,
\emph{Almost everywhere convergence of gradients of solutions to nonlinear
elliptic systems},
Nonlinear Anal. \textbf{31} (1998), no.~3, 405--412.

\bibitem{DamascelliMerchanMontoroSciunzi2014}
L.~Damascelli, S.~Merch\'an, L.~Montoro, and B.~Sciunzi,
\emph{Radial symmetry and applications for a problem involving the
$-\Delta_p(\cdot)$ operator and critical nonlinearity in $\mathbb R^N$},
Adv. Math. \textbf{256} (2014), 313--335.

\bibitem{Ekeland1974}
I.~Ekeland,
\emph{On the variational principle},
J. Math. Anal. Appl. \textbf{47} (1974), 324--353.

\bibitem{Ferrero2006}
A.~Ferrero,
\emph{Least energy solutions for critical growth equations with a lower
order perturbation},
Adv. Differential Equations \textbf{11} (2006), no.~10, 1167--1200.

\bibitem{GarciaAzoreroPeral1987}
J.~P. Garc\'ia Azorero and I.~Peral Alonso,
\emph{Existence and nonuniqueness for the $p$-Laplacian: nonlinear
eigenvalues},
Comm. Partial Differential Equations \textbf{12} (1987), no.~12,
1389--1430.

\bibitem{GueddaVeron1989}
M.~Guedda and L.~V\'eron,
\emph{Quasilinear elliptic equations involving critical Sobolev exponents},
Nonlinear Anal. \textbf{13} (1989), no.~8, 879--902.


\bibitem{HyndKawohlLindqvist2023}
R.~Hynd, B.~Kawohl and P.~Lindqvist,
\emph{On the uniqueness of eigenfunctions for the vectorial $p$-Laplacian},
Arch. Math. (Basel) \textbf{121} (2023), 745--755.

\bibitem{Lieberman1988}
G.~M. Lieberman,
\emph{Boundary regularity for solutions of degenerate elliptic equations},
Nonlinear Anal. \textbf{12} (1988), no.~11, 1203--1219.

\bibitem{MauroMultiplicityRegularity}
S.~Mauro,
\emph{Multiplicity and regularity results for quasilinear elliptic systems
via nonsmooth critical point theory},
arXiv preprint arXiv:2603.24087, 2026.

\bibitem{McCurdySaldanaSchiera2026}
S.~McCurdy, A.~Salda\~na, and D.~Schiera,
\emph{On Neumann $p$-Laplacian Lane--Emden equations and their asymptotic
relationship with relative isoperimetric problems},
arXiv preprint arXiv:2606.08222, 2026.

\bibitem{PariniWeth2015}
E.~Parini and T.~Weth,
\emph{Existence, unique continuation and symmetry of least energy nodal
solutions to sublinear Neumann problems},
Math. Z. \textbf{280} (2015), no.~3--4, 707--732.

\bibitem{PucciServadei2008}
P.~Pucci and R.~Servadei,
\emph{Regularity of weak solutions of homogeneous or inhomogeneous
quasilinear elliptic equations},
Indiana Univ. Math. J. \textbf{57} (2008), no.~7, 3329--3363.

\bibitem{ReichelWalter1997}
W.~Reichel and W.~Walter,
\emph{Radial solutions of equations and inequalities involving the
$p$-Laplacian},
J. Inequal. Appl. \textbf{1} (1997), no.~1, 47--71.

\bibitem{SaldanaTavares2022}
A.~Salda\~na and H.~Tavares,
\emph{On the least-energy solutions of the pure Neumann Lane--Emden
equation},
NoDEA Nonlinear Differential Equations Appl. \textbf{29} (2022), no.~3,
Paper No.~30.

\bibitem{Sciunzi2016}
B.~Sciunzi,
\emph{Classification of positive $\mathcal D^{1,p}(\mathbb R^N)$-solutions
to the critical $p$-Laplace equation in $\mathbb R^N$},
Adv. Math. \textbf{291} (2016), 12--23.

\bibitem{SzulkinWeth2010}
A.~Szulkin and T.~Weth,
\emph{The method of Nehari manifold},
in \emph{Handbook of Nonconvex Analysis and Applications},
Int. Press, Somerville, MA, 2010, 597--632.

\bibitem{Trudinger1967}
N.~S. Trudinger,
\emph{On Harnack-type inequalities and their application to quasilinear
elliptic equations},
Comm. Pure Appl. Math. \textbf{20} (1967), no.~4, 721--747.

\bibitem{Vetois2016}
J.~V\'etois,
\emph{A priori estimates and application to the symmetry of solutions for
critical $p$-Laplace equations},
J. Differential Equations \textbf{260} (2016), no.~1, 149--161.

\end{thebibliography}
\end{document}